\newtheorem{theo}{Theorem}[section]
\newtheorem{prop}[theo]{Proposition}
\newtheorem{lemma}[theo]{Lemma}
\newtheorem{cor}[theo]{Corollary}
\theoremstyle{definition}
\newtheorem{art}[theo]{}
\newtheorem{defi}[theo]{Definition}
\theoremstyle{remark}
\newtheorem{example}[theo]{Example}
\newtheorem{rem}[theo]{Remark}
\newcommand{\A}{\mathcal{A}}
\newcommand{\B}{\mathcal{B}}
\renewcommand{\H}{\mathcal{H}}
\newcommand{\N}{\mathbb{N}}
\renewcommand{\O}{\mathcal{O}}
\newcommand{\Oo}{\mathcal{O}^\circ}
\newcommand{\Qp}{\mathbb{Q}_p}
\newcommand{\R}{\mathbb{R}}
\newcommand{\X}{\mathcal{X}}
\newcommand{\Z}{\mathbb{Z}}
\newcommand{\Zp}{\mathbb{Z}_p}
\renewcommand{\and}{\text{and}}
\newcommand{\im}{\text{im}}
\newcommand{\st}{ \ \big| \ }
\newcommand{\cha}{\text{char}}
\newcommand{\Spec}{\text{Spec}}
\newcommand{\val}{\ensuremath{\sqrt{|k^{*} |}}}
\newcommand{\Ao}{\mathcal{A}^\circ}
\newcommand{\Aoo}{\mathcal{A}^{\circ \circ}}
\newcommand{\Ah}{\check{\mathcal{A}}}
\newcommand{\At}{\widetilde{\mathcal{A}}}
\newcommand{\Atp}{\widetilde{\mathcal{A}}^+}
\newcommand{\Atpr}{\widetilde{\mathcal{A}}^+_r}
\newcommand{\Aor}{\mathcal{A}^\circ_r}
\newcommand{\Aoor}{\mathcal{A}^{\circ \circ}_r}
\newcommand{\Atr}{\widetilde{\mathcal{A}}^+_r}
\newcommand{\Aos}{\mathcal{A}^\circ_s}
\newcommand{\Ared}{{\A_{\rm red}}}
\newcommand{\Atot}{\widetilde{\A}^+_{\text{tot}} }
\newcommand{\Bh}{\check{\mathcal{B}}}
\newcommand{\Bo}{\mathcal{B}^{\circ}}
\newcommand{\Boo}{\mathcal{B}^{\circ \circ}}
\newcommand{\Bor}{\mathcal{B}^\circ_r}
\newcommand{\Bt}{\widetilde{\mathcal{B}}}
\newcommand{\Btot}{\widetilde{\B}^+_{\text{tot}} }
\newcommand{\Btp}{\widetilde{\mathcal{B}}^+}
\newcommand{\Btr}{\widetilde{\mathcal{B}}^+_r}
\newcommand{\Df}{\mathfrak{D}}
\newcommand{\coker}{\text{coker}}
\newcommand{\GOX}{\Gamma(X,\O_X)}
\newcommand{\Ker}{\text{Ker}}
\newcommand{\kX}{k\langle \zeta  \rangle}
\newcommand{\kXX}{k\langle\langle \zeta \rangle\rangle}
\newcommand{\knX}{k\langle \varepsilon_n^{-1}\zeta \rangle}
\renewcommand{\O}{\mathcal{O}}
\newcommand{\Quot}{\text{Quot}}
\newcommand{\red}{\text{red}}
\newcommand{\rK}{\sqrt{|k^*|}}
\newcommand{\rk}{\sqrt{|k^*|}}
\newcommand{\Rmn}{R\langle T_1,\ldots ,T_m\rangle \llbracket S_1, \ldots , S_n \rrbracket}
\newcommand{\RmnT}{R\langle T_1,\ldots ,T_m\rangle \llbracket S_1, \ldots , S_n \rrbracket}
\newcommand{\supn}[1]{|#1|_{\sup}}
\newcommand{\tk}{\widetilde{k}}
\newcommand{\tX}{\widetilde{X}}
\newcommand{\Uf}{\mathfrak{U}}
\newcommand{\Xf}{\mathfrak{X}}
\newcommand{\Xg}{\mathfrak{X}_\eta}
\newcommand{\Xt}{\widetilde{X}}
\newcommand{\Xred}{{X_{\rm red}}}
\DeclareMathOperator{\spe}{sp}
\DeclareMathOperator{\Spf}{Spf}
\date{\today}
\begin{document}
\title[Analytic functions on tubes of non-archimedean analytic spaces]{Analytic functions on tubes of non-archimedean analytic spaces}

\author{Florent Martin with a joint appendix with Christian Kappen}

\address{Florent Martin, Fakult\"{a}t f\"{u}r Mathematik, Universit\"{a}t Regensburg, 93040 Regensburg, Germany}
\email{florent.martin@mathematik.uni-regensburg.de}
\urladdr{http://homepages.uni-regensburg.de/$\sim$maf55605/}

\address{Christian Kappen}
\email{chrkappen@me.com}

\begin{abstract}
Let $k$ be a discretely valued non-archimedean field.
We give an explicit description of analytic functions whose norm is bounded by a given real number $r$ on tubes of reduced $k$-analytic spaces associated to special formal schemes
(those include $k$-affinoid spaces as well as open polydiscs).
As an application we study the connectedness of these tubes.
This generalizes (in the discretely valued case) a result of Siegfried Bosch.
We use as a main tool a result of Aise Johan de Jong relating formal and analytic functions on special formal schemes and a generalization of de Jong's result which is proved in the joint appendix with Christian Kappen.
\end{abstract}

\subjclass[2010]{14G22, 13F25 (Primary); 13F40 (Secondary)}
\keywords{affinoid spaces, tubes, semi-affinoid, reduction}

\maketitle

\setcounter{tocdepth}{1}

\section{Introduction}
Let us temporarily consider a non-archimedean non-trivially valued field $k$.
We will work with $k$-analytic spaces which were introduced by Vladimir Berkovich in \cite{Berko90,Berko93} (and we will always consider strictly $k$-affinoid and strictly $k$-analytic spaces).
If $X$ is a $k$-affinoid space, its ring of analytic functions $\A$ is a $k$-affinoid algebra which has nice algebraic properties.  
A $k$-analytic space is connected if and only if its ring of global analytic functions contains no nontrivial idempotents. 
In concrete situations, if $X$ is a $k$-affinoid space, one can expect to use the nice algebraic properties of its $k$-affinoid algebra $\A$ to study the connectedness of $X$. 
For more general $k$-analytic spaces, it might be difficult to deal with their ring of global analytic functions.
For instance, the ring of global analytic functions of the open unit disc is not Noetherian. 
The starting point of this work is to use generic fibres of special formal schemes to overcome this difficulty in certain situations.

\subsection*{A result of Siegfried Bosch}
Our motivation is a generalization as well as a new proof of a result due to Siegfried Bosch \cite{BoschBemerk} in the discretely valued case.
Let us consider a $k$-affinoid algebra $\A$ and let $X$ be the associated $k$-affinoid space.
Let $x$ be a rigid point of $X$, $\mathfrak{m}_x$ the associated maximal ideal of $\A$  and let $\widetilde{x}$ be the image of $x$ under the reduction map $\red : X \to  \widetilde{X}$ 
(for the definition and properties of the reduction map, we refer to \cite[7.1.5]{BGR} for rigid spaces and to \cite[2.4]{Berko90} for $k$-analytic spaces). 
Let $X_+(x):= \red^{-1}(\widetilde{x})$. 
Following Pierre Berthelot's terminology  \cite[1.1.2]{Berth}, we call $X_+(x)$ the tube of $\widetilde{x}$ in $X$.
S. Bosch proves that if $X$ is distinguished and equidimensional, $X_+(x)$  is connected.
This connectedness result is a corollary of the main result of Bosch's article \cite[Theorem 5.8]{BoschBemerk}, 
which asserts that if $X$ is distinguished and equidimensional, then 
\begin{equation*}
\label{eq:Bosch}
\Gamma(X_+(x), \Oo_X) 
\simeq (\Ao)^{\wedge (t\cdot \Ao + \overset{\circ}{\mathfrak{m}_x})} 
\simeq \varprojlim_n \Ao/(t\cdot \Ao + \overset{\circ}{\mathfrak{m}_x})^n
\end{equation*} 
where  $t\in k$ with $0<|t|<1$, $\Oo_X$ denotes the sheaf of analytic functions $f$ such that $|f|_{\sup} \leq 1$, $\overset{\circ}{\mathfrak{m}_x}:= \mathfrak{m}_x \cap \Ao$ 
and $^\wedge$ denotes the completion with respect to an ideal. \par 

\subsection*{Analytic functions and formal functions}
For the rest of the article, we assume that $k$ is discretely valued, with  non-trivial valuation.
We denote its valuation ring by $R$ and we fix a uniformizer $\pi$.
Following \cite[section 1]{Berko96}, we say that an adic $R$-algebra $A$ is a \emph{special $R$-algebra} if it is isomorphic to a quotient of $\RmnT$ 
equipped with the $(\pi,S_1,\ldots,S_n)$- adic topology. 
Let  $\Xf:= \Spf(A)$ be  its associated formal $R$-scheme. 
Following a construction due to  Berthelot \cite[0.2.6]{Berth} for rigid spaces and extended to $k$-analytic spaces by  Berkovich \cite[section 1]{Berko96}, 
one can associate to $\Xf$ a $k$-analytic space denoted by $\Xg$ called its generic fibre. 
For instance, if $A= \RmnT$, $\Xg \simeq E^m\times B^n$ 
where we denote by $E^m$ (resp. $B^n$) the $m$-dimensional closed unit polydisc (resp. the $n$-dimensional open unit polydisc).
Following the terminology introduced by Christian Kappen in \cite{KappThesis,KappUni}, we say that $\A := A\otimes_Rk$ is a \emph{semi-affinoid $k$-algebra}.
Up to canonical isomorphism, the $k$-analytic space $\Xg$ depends only on the semi-affinoid $k$-algebra $\A$ and we call it a 
\emph{semi-affinoid $k$-analytic space} (this should not be confused with semi-affinoid $k$-spaces in \cite{KappThesis,KappUni}).
So we can functorially associate to a semi-affinoid $k$-algebra a $k$-analytic space.
If $A$ is $R$-flat, one gets a natural injection $A \rightarrow \Gamma(\Xg,  \Oo_{\Xg})$. 
When $A$ is in addition normal, it was proven by A.J. de Jong \cite[7.4.1]{deJongCryst} that 
\begin{equation*}
\label{eq:deJong}
 A \simeq \Gamma(\Xg,  \Oo_{\Xg}).
\end{equation*} 
For the applications we have in mind, we need the following generalization which was already stated without proof in \cite[7.4.2]{deJongCryst}.

\bigskip
\noindent
{\bf Theorem \ref{theo:SpecialIntegral}.}
\emph{ Let $A$ be a reduced special $R$-algebra which is $R$-flat, integrally closed in $A\otimes_Rk$, and let $X$ be the associated $k$-analytic space. 
Then $A \simeq \Gamma(X,  \Oo_{X})$.
}

\bigskip 
\noindent
Let us mention that if $A$ is a special $R$-algebra which is $R$-flat and integrally closed in $A\otimes_Rk$, then $A$ is automatically reduced 
(see the argument in \cite[Remark 2.7]{KappUni}).
Hence one can remove the assumption that $A$ is reduced in the above theorem if necessary.
Theorem \ref{theo:SpecialIntegral} easily follows from the following result which is proved in the appendix with Christian Kappen.

\bigskip
\noindent
{\bf Theorem \ref{theo:bound}.} 
\emph{ Let $\A$ be a reduced semi-affinoid $k$-algebra, and let $X$ be the associated $k$-analytic space. Then 
$\A \simeq \{f \in \Gamma(X,\O_{X}) \st \supn{f}< \infty \} $.
}

\subsection*{Main result}
Actually, the tube $X_+(x)$ is a semi-affinoid $k$-analytic space.
Therefore, it seemed very natural to us to look for a generalization and a more direct proof of Bosch's result \cite[Theorem 5.8]{BoschBemerk} using special formal $R$-schemes, 
semi-affinoid $k$-algebras and de Jong's result as well as its generalization (Theorem \ref{theo:SpecialIntegral}). 
This is the content of this article.
Let us fix $X$  a reduced  semi-affinoid $k$-analytic space.
Let $f_1,\ldots,f_n\in \Gamma(X,\O^\circ_X)$ and let 
\[U:= \{x\in X \st |f_i(x)| <1 \ \forall i=1\ldots n\}.\]
\bigskip
\noindent
{\bf Theorem \ref{theo:Completion}} and {\bf Proposition \ref{prop:Bor}.}
\emph{ With the above notations,}
\begin{equation}
\label{eq:completion}
 \Gamma(X,\O^\circ_X) ^{\wedge (f_1,\ldots f_n)} \simeq \Gamma(U,\Oo_{X}).
 \end{equation}
 More generally, for any positive real number $r$, 
$ \Gamma(X,\O^{\leq r}_X) ^{\wedge (f_1,\ldots f_n)} \simeq \Gamma(U,\O^{\leq r}_{X})$
 where $\O_X^{\leq r}$ is the sheaf of analytic functions $f$ such that $|f| \leq r$. 

\bigskip
\par 
In section \ref{section:ReductionConnectedness} we associate to a semi-affinoid $k$-analytic space $X$ its \emph{canonical reduction} $\tX$ which is  a $\widetilde{k}$-scheme of finite type, and a canonical reduction map $\red : X \to \tX$.
If $X$ is a $k$-affinoid space, then $\red$ coincides with the reduction map of \cite[2.4]{Berko90}. 
We prove the following result:

\bigskip 
\noindent 
{\bf Corollary \ref{cor:Tubes2}.}
\emph{ Let $Z \subset \tX$ be a connected Zariski closed subset.
Then $\red^{-1}(Z)$ is connected.}

\bigskip 
We want to stress that our results hold under the assumption that $k$ is discretely valued, whereas this assumption is not made in \cite{BoschBemerk}. 
We conjecture that Theorem \ref{theo:Completion} holds for any non-trivially valued non-archimedean field $k$ and for any reduced $k$-affinoid space 
(with $(f_1,\ldots f_n)$ replaced by $(t,f_1,\ldots f_n)$ for some $t\in k^*$ with $|t|<1$).
We do not see how this could be done using the techniques of \cite{BoschBemerk}. 
We believe that quasi-affinoid $k$-algebras 
(which are a generalization of special $R$-algebras and semi-affinoid $k$-algebras 
to arbitrary non-archimedean non-trivially valued fields \cite{LR_ring}) 
might be a good framework to tackle this. 
We also want to stress that our results are more general and the proofs simpler than in \cite{BoschBemerk} regarding the following points.
\\
- The proof we give of \eqref{eq:completion} is pretty short: one has to use Theorem \ref{theo:SpecialIntegral} and a certain compatibility between integral 
closure and tensor product for excellent rings (whose use was suggested to us by Ofer Gabber).
\\
- The explicit description of the rings $\Gamma(U,\Oo_{X})$ given in 
Theorem \ref{theo:Completion} holds for any tube whereas in \cite{BoschBemerk} this was proved only for tubes over closed points. 
\\
- For a positive real number $r$, we extend Bosch's result to analytic functions $f$ such that $|f|_{\sup} \leq r$ (Proposition \ref{prop:Bor}).
\\
- Unlike in \cite{BoschBemerk}, we do not assume that $X$ is distinguished or equidimensional. 
Hence, Corollary \ref{cor:Tubes2} answers positively in the discretely valued case the question raised by J\'{e}r\^{o}me Poineau in \cite[Remarque 2.9]{Poi14}.
Let us point out that using Bosch's result, Antoine Ducros has proved \cite[Lemma 3.1.2]{Duc_sa} that when $k$ is algebraically closed and $X$ is equidimensional and reduced, the tube of a connected Zariski closed subset of $\widetilde{X}$ in $X$ is connected;
afterwards in \cite[2.8]{Poi14} J. Poineau (still relying on Bosch's result) proved the same statement for any $k$ and assuming only that $X$ is equidimensional. 
\\
- Our results hold not only for $k$-affinoid spaces, but also for semi-affinoid $k$-analytic spaces.
\subsection*{Organisation of the paper}
In section \ref{section:semi-affinoid}, we give general facts about semi-affinoid $k$-analytic spaces.
In section \ref{section:Analytic functions and formal functions on tubes}, we prove the main result of the article: Theorem \ref{theo:Completion}.
In section \ref{section:ReductionConnectedness} we define and study the canonical reduction of  semi-affinoid $k$-analytic spaces, and apply it to study the connectedness  of tubes.
In section \ref{section: Graded Version} we prove Proposition \ref{prop:Bor} which is the graded version of Theorem \ref{theo:Completion}.
In section \ref{section:additional remarks} we grasp additional remarks about semi-affinoid $k$-analytic spaces.
The joint appendix with Christian Kappen aims to prove Theorem \ref{theo:bound}.

\textbf{Acknowledgements:} 
We would like to thank the referee for a very careful reading leading to a great improvement of this text.
The first author would like to thank Ofer Gabber for pointing out a result of EGA IV which greatly simplified the proof of the main result. 
He also would like to thank 
Antoine Ducros, J\'{e}r\^{o}me Poineau and Michael Temkin for useful comments and conversations about this work.
The first author acknowledges support from SFB 1085 Higher invariants funded by the Deutsche Forschungsgesellschaft (DFG).

\section{Semi-affinoid \texorpdfstring{$k$}{k}-analytic spaces}
\label{section:semi-affinoid}

Following \cite[definition 2.2]{KappUni}, we say that a $k$-algebra $\A$ is a semi-affinoid $k$-algebra if it is of the form $A \otimes_R k$ for some special $R$-algebra $A$.
Equivalently, a semi-affinoid $k$-algebra is a quotient of $\RmnT\otimes_Rk$ for some given integers $m$ and $n$. 
The category of semi-affinoid $k$-algebras is defined as the category whose objects are semi-affinoid $k$-algebras and whose morphisms are $k$-algebra morphisms.
In particular, the category of $k$-affinoid algebras is a full subcategory of the category of semi-affinoid $k$-algebras.
If $\A$ is a semi-affinoid $k$-algebra,  a \emph{special $R$-model} of $\A$ is an $R$-flat special $R$-algebra $A$  such that there exists an isomorphism of $k$-algebras $A\otimes_Rk \simeq \A$.
If $A$ is an $R$-flat special $R$-algebra, one has a natural inclusion 
$A \to \A:=A\otimes_Rk$ and 
through this inclusion, $A$ is identified with a special $R$-model of $\A$. 
One can define \cite[2.3.1]{KappUni} a functor from the category of semi-affinoid $k$-algebras to the category of $k$-analytic spaces. 
If $\A$ is a semi-affinoid $k$-algebra,  its associated $k$-analytic space $X$ is called a semi-affinoid $k$-analytic space.
For any special $R$-model $A$ of $\A$, one has a natural isomorphism 
$X \simeq \Spf(A)_\eta$.
If $f\in \A$, we set 
$|f|_{\sup} := \sup\{ |f(x)| \st x\in X \}$ 
and we define 
\[\Ao  =  \{f \in \A \st |f|_{\sup} \leq 1\}.\]
It is proved in the appendix (Theorem \ref{theo:bound}) that if the semi-affinoid $k$-algebra $\A$ is reduced, one has an isomorphism 
\[ \A \simeq \{f \in \GOX \st \supn{f}< \infty \} .\]
We are particularly interested in the following consequence of this result.

\begin{theo}
\label{theo:SpecialIntegral}
Let $A$ be a reduced special $R$-algebra with associated $k$-analytic space $X$.
Let us assume that $A$ is $R$-flat and integrally closed in $A\otimes_Rk$. 
Then $A \simeq \Gamma(X,  \Oo_{X})$.
\end{theo}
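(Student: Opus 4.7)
The plan is to deduce Theorem \ref{theo:SpecialIntegral} from Theorem \ref{theo:bound} of the appendix together with the hypothesis that $A$ is integrally closed in $\A$, by tracking what the sup-norm-at-most-$1$ condition means on both sides of the identification.

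Since $A$ is reduced and $R$-flat, $\A := A \otimes_R k$ is a reduced semi-affinoid $k$-algebra. Applying Theorem \ref{theo:bound} to $\A$ provides the identification
\[ \A \simeq \{ f \in \Gamma(X,\O_X) : \supn{f} < \infty \}. \]
A global section of $\Oo_X$ satisfies $\supn{f} \leq 1$, so is bounded, so sits in $\A$; conversely any $f \in \A$ with $\supn{f} \leq 1$ defines a global section of $\Oo_X$. Thus $\Gamma(X, \Oo_X) \simeq \Ao := \{f \in \A : \supn{f} \leq 1\}$, and it remains to show $A = \Ao$.

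The inclusion $A \subseteq \Ao$ is immediate: after choosing a presentation $A = R\langle T_1,\ldots,T_m\rangle[[S_1,\ldots,S_n]]/J$, any $a \in A$ lifts to a power series with coefficients in $R$, and since $|T_i(x)| \leq 1$ and $|S_j(x)| < 1$ at every $x \in X$, one has $\supn{a} \leq 1$. For the reverse inclusion $\Ao \subseteq A$, it suffices, by the integrally-closed hypothesis, to verify that every $f \in \Ao$ is integral over $A$.

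The main obstacle is this last integrality claim, which is the semi-affinoid analogue of the classical fact (cf.\ BGR, 6.3.4) that the power-bounded subring of a reduced affinoid algebra coincides with the integral closure of any reduced affinoid $R$-model. One may either appeal to the corresponding statement in the theory of semi-affinoid algebras developed by Kappen, or establish it directly by covering $X$ by the affinoid subdomains $X_r := \{x \in X : |S_j(x)| \leq r \text{ for all } j\}$ for $r \in \sqrt{|k^*|}$ with $r<1$, applying the affinoid result on each $X_r$ to produce integral equations for $f|_{X_r}$ over the associated affinoid $R$-model, and then patching these into a single integral equation over $A$. Granting this, $\Ao \subseteq A$, and combined with the opposite inclusion we obtain $A \simeq \Gamma(X, \Oo_X)$.
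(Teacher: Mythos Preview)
Your proof is correct and matches the paper's approach: identify $\Gamma(X,\Oo_X)$ with $\Ao$ via Theorem~\ref{theo:bound}, then conclude $A=\Ao$ from the integrality of $\Ao$ over $A$ combined with the integrally-closed hypothesis. The paper handles the integrality step by a direct citation of \cite[Corollaries~2.10 and~2.11]{KappUni}; your alternative of patching integral equations over the affinoid exhaustion $X_r$ is only sketched and would need care (the degree of the monic equation for $f|_{X_r}$ may vary with $r$, so there is no obvious limiting equation over $A$), so the appeal to Kappen's results is the clean way to close the argument.
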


\begin{proof}
Let $\A = A\otimes_Rk$.
By definition, $\A$ is a reduced semi-affinoid $k$-algebra so thanks to Theorem \ref{theo:bound}, one has 
$\Ao\simeq \Gamma(X,\Oo_{X})$. 
According to  \cite[Corollaries 2.10 and 2.11]{KappUni} $\Ao$ is a special $R$-algebra which contains $A$ and the inclusion $A \subset \Ao$ is integral. 
Since by assumption $A$ is integrally closed in $\A$, $A$ is also integrally closed in $\Ao$. 
So $A \simeq \Ao\simeq \Gamma(X,  \Oo_{X})$.
\end{proof}

\begin{cor}
\label{cor:reduced special}
Let $\A$ be a reduced semi-affinoid $k$-algebra with associated $k$-analytic space $X$.
\begin{enumerate}[(i)]
\item The $R$-algebra $\Gamma(X,  \Oo_{X})$ is a reduced special $R$-algebra.
\item If $A$ is a reduced special $R$-algebra such that $A\otimes_Rk \simeq \A$, then $\Gamma(X,  \Oo_{X})$ is isomorphic to the integral closure of $A$ in $A \otimes_R k$.
\end{enumerate}
\end{cor}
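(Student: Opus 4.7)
The plan is to deduce both parts directly from Theorem~\ref{theo:bound} together with the structural description of $\Ao$ in \cite[Corollaries 2.10 and 2.11]{KappUni}. The key preliminary observation is that $\Gamma(X, \Oo_X)$ may be identified with the subring $\Ao := \{f \in \A : \supn{f} \leq 1\}$ of $\A$: Theorem~\ref{theo:bound} identifies the bounded global sections of $\O_X$ with $\A$, and intersecting with the sheaf $\Oo_X$ of functions of sup norm at most one picks out exactly $\Ao$.

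For part (i), once the identification $\Gamma(X, \Oo_X) \simeq \Ao$ is in place, Kappen's corollaries cited above immediately give that $\Ao$ is a special $R$-algebra, and reducedness is automatic since $\Ao \subset \A$ and $\A$ is reduced by hypothesis.

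For part (ii), let $A'$ denote the integral closure of $A$ in $\A$; I will establish $A' = \Ao$ as subrings of $\A$, which combined with (i) yields the claim. One inclusion, $\Ao \subseteq A'$, is a direct consequence of \cite[Corollaries 2.10 and 2.11]{KappUni}: that result gives $A \subset \Ao$ together with the integrality of this extension. For the reverse inclusion, I would check that $\Ao$ is itself integrally closed in $\A$ via the standard non-archimedean argument: if $f \in \A$ satisfies an integral equation $f^n + \sum_{i<n} b_i f^i = 0$ with $b_i \in \Ao$, then at any point $x \in X$ one cannot have $|f(x)| > 1$, since this would force $|f(x)|^n \leq \max_{i<n} |b_i(x)| \, |f(x)|^i \leq \max_{i<n} |f(x)|^i < |f(x)|^n$. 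Combined with the inclusion $A \subset \Ao$, this yields $A' \subseteq \Ao$.

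No genuine obstacle arises: once Theorem~\ref{theo:bound} is available, the corollary is a routine repackaging of Kappen's description of $\Ao$ together with the elementary integral closedness of $\Ao$ in $\A$. The only point to be mildly careful about is the identification of $\Gamma(X, \Oo_X)$ with $\Ao$ inside $\Gamma(X, \O_X)$, but this is immediate from the sheaf-theoretic definition of $\Oo_X$.
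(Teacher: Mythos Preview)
Your proof is correct and uses the same ingredients as the paper's proof—Theorem~\ref{theo:bound} to identify $\Gamma(X,\Oo_X)$ with $\Ao$, and \cite[Corollaries 2.10 and 2.11]{KappUni} for the structure of $\Ao$. The organization differs slightly: the paper first forms the integral closure $A'$ of $A$ in $\A$, uses excellence of $A$ to see that $A'$ is a finite $A$-algebra (hence special), and then applies Theorem~\ref{theo:SpecialIntegral} to $A'$ to obtain $\Gamma(X,\Oo_X)\simeq A'$, deducing (i) and (ii) at once. You instead prove (i) directly from Kappen's result and then establish $A'=\Ao$ by the elementary pointwise argument that $\Ao$ is integrally closed in $\A$; this bypasses both Theorem~\ref{theo:SpecialIntegral} and the appeal to excellence, at the cost of repeating part of the content of Theorem~\ref{theo:SpecialIntegral}'s proof.
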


\begin{proof}
Let $A$ be a reduced special $R$-model of $\A$, so that $A\otimes_Rk \simeq \A$. 
Let $A'$ be the integral closure of $A$ in $\A$.
Since $A$ is excellent (see \cite{Val75,Val76}), $A'$ is a finite $A$-algebra, so $A'$ is a  reduced special $R$-algebra. 
Since $A'\otimes_Rk \simeq A\otimes_Rk \simeq \A$, the $k$-analytic spaces attached to $A$ and $A'$ are both isomorphic to $X$.
So thanks to Theorem \ref{theo:SpecialIntegral}, $\Gamma(X,  \Oo_{X})\simeq A'$ which is a special $R$-algebra.
This proves (i) and (ii).
\end{proof}

\begin{example}
\label{remark example nonreduced1}
\label{CE1}
The two statements of Corollary \ref{cor:reduced special} do not hold for  non-reduced  semi-affinoid $k$-algebras.
For instance, if $\A  =  (R[[T_1,T_2]] \otimes_R k )/ (T_2^2)$ with associated $k$-analytic space $X$, then any element of 
$\Gamma(X,  \Oo_{X})$  is of the form $f(T_1) + g(T_1) T_2$ 
where $f(T_1) \in R \llbracket T_1 \rrbracket$ and $g(T_1)$ is an arbitrary analytic function on the open unit disc.
We can choose $g(T_1)$  such that 
$g(T_1) \notin R\llbracket T_1 \rrbracket \otimes_R k$ (in mixed characteristic, one could take $g = \log$).
This gives a counterexample to  the statements (i) and (ii) of Corollary \ref{cor:reduced special} (for (i), see \cite[Remark 2.7]{KappUni}).
\end{example}

\begin{cor}
\label{cor:equivalence}
The functor 
\[ \{\text{semi-affinoid}\ k\text{-algebras}\} \to \{ k\text{-analytic spaces}\} \]
is faithful  and its restriction 
\[ \{ \text{reduced semi-affinoid} \ k\text{-algebras}\} \to \{ k\text{-analytic spaces}\} \]
is fully faithful.
\end{cor}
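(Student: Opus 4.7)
My plan is to exploit the functoriality of the construction $\A \mapsto X$ together with the natural inclusion $\A \hookrightarrow \GOX$. By the very definition of the functor, any morphism $\varphi : \A \to \B$ yields a morphism $\varphi^* : Y \to X$ whose sheaf pullback $(\varphi^*)^\# : \GOX \to \Gamma(Y, \O_Y)$ restricts to $\varphi$ along the natural inclusions $\A \hookrightarrow \GOX$ and $\B \hookrightarrow \Gamma(Y, \O_Y)$, that is, the obvious square commutes.

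For faithfulness, I would begin by observing that $\A \to \GOX$ is injective for every semi-affinoid $\A$. Writing $\A = A \otimes_R k$ for an $R$-flat special $R$-model $A$, the natural map $A \to \GOX$ is injective from Berkovich's construction of the generic fibre $X = \Spf(A)_\eta$, and inverting $\pi$ yields the injectivity of $\A \hookrightarrow \GOX$. Given $\varphi_1, \varphi_2 : \A \to \B$ inducing the same morphism $g : Y \to X$, the two composites $\A \xrightarrow{\varphi_i} \B \hookrightarrow \Gamma(Y, \O_Y)$ both equal $\A \hookrightarrow \GOX \xrightarrow{g^\#} \Gamma(Y, \O_Y)$, so they agree, and injectivity of $\B \hookrightarrow \Gamma(Y, \O_Y)$ forces $\varphi_1 = \varphi_2$.

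For the fully faithful statement on reduced algebras, given $g : Y \to X$ I would define $\varphi : \A \to \B$ as the restriction of $g^\#$ to bounded global sections; this is well defined because pullback contracts the sup norm, $\supn{g^\#(f)} \leq \supn{f}$, and by Theorem~\ref{theo:bound} the bounded global sections of $X$ and $Y$ are exactly $\A$ and $\B$. The remaining task, which is the main obstacle, is to check that $\varphi^* = g$. Using Corollary~\ref{cor:reduced special} one presents $X$ as $\Spf(\A^\circ)_\eta$ with $\A^\circ = \Gamma(X, \Oo_X)$ a special $R$-algebra, and similarly for $Y$; both $g$ and $\varphi^*$ then restrict to the same $R$-algebra morphism $\A^\circ \to \B^\circ$, namely the restriction of $\varphi$ to unit balls, so they arise as generic fibres of the same morphism of formal schemes $\Spf(\B^\circ) \to \Spf(\A^\circ)$, and hence coincide by the faithfulness of Berkovich's generic fibre functor on such $R$-flat special formal $R$-schemes (itself a direct consequence of the injectivity $\A^\circ \hookrightarrow \Gamma(X, \Oo_X)$ supplied by Theorem~\ref{theo:SpecialIntegral}).
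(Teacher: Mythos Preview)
Your faithfulness argument is correct and is essentially the paper's: injectivity of $\A \hookrightarrow \GOX$ together with the commutative square does the job.

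The fullness argument has a genuine gap. You define $\varphi : \A \to \B$ correctly via Theorem~\ref{theo:bound}, and it is true that $g$ and $\varphi^*$ induce the same map $\Ao \to \Bo$ on bounded global sections. But the step ``so they arise as generic fibres of the same morphism of formal schemes $\Spf(\Bo) \to \Spf(\Ao)$'' is unjustified for $g$. By construction $\varphi^*$ is the generic fibre of $\Spf(\varphi|_{\Ao})$; however, you have not shown that the arbitrary analytic morphism $g$ is the generic fibre of \emph{any} morphism of formal schemes---that is precisely the fullness you are trying to prove. Faithfulness of the generic fibre functor says only that two \emph{formal} morphisms with the same generic fibre coincide; it does not say that two \emph{analytic} morphisms inducing the same map on $\Gamma(\cdot,\Oo)$ coincide. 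You are implicitly assuming that a morphism of reduced semi-affinoid $k$-analytic spaces is determined by its effect on bounded global sections, which is the content of the corollary itself.

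The paper avoids this circularity by fixing a presentation $\B \simeq (\RmnT\otimes_Rk)/I$, yielding a closed immersion $Y \hookrightarrow E^m\times B^n$. Composing with $f:X\to Y$ produces coordinate functions $f_1,\ldots,f_{m+n} \in \Gamma(X,\Oo_X)=\Ao$ (Theorem~\ref{theo:bound}), and then \cite[Theorem~2.13]{KappUni} supplies a unique morphism of semi-affinoid $k$-algebras $\RmnT\otimes_Rk \to \A$ with these coordinate images, which factors through $\B$. The point is that morphisms from $X$ into the polydisc $E^m\times B^n$ genuinely are determined by their coordinate functions (this is a concrete universal property, checked by exhausting $B^n$ by closed polydiscs and using the affinoid case), and it is this concrete fact---not an abstract faithfulness statement---that pins down $g$.
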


\begin{proof}
If $f : X \to Y$ is a morphism of $k$-analytic spaces induced by a morphism of  semi-affinoid $k$-algebras $\B \to \A$ then $f$ induces 
the following diagram:
\[ \xymatrix{
\B \ar[r] \ar[d]  & \A \ar[d] \\
\Gamma(Y,\O_Y) \ar[r] & \Gamma(X,\O_X)
} \]
whose vertical arrows are injective. 
This proves that the functor is faithful.\par 
To prove that the restriction of the functor to reduced semi-affinoid $k$-algebras is full, let us 
fix a reduced semi-affinoid $k$-algebra $\A$ and let $X$ be its associated $k$-analytic space.
Let $Y$ be the $k$-analytic space associated to another semi-affinoid $k$-algebra $\B$ and let us consider a presentation 
\[\B = (\RmnT\otimes_Rk)/I\] 
giving rise to a closed immersion $Y \hookrightarrow E^m\times B^n$.
If $f : X \to Y$ is a morphism of $k$-analytic spaces then using the composition 
\[ X \xrightarrow[]{f} Y \hookrightarrow E^m\times B^n\]
one gets functions $f_1,\ldots,f_m\in \Gamma(X,\Oo_X)$ and 
$f_{m+1},\ldots,f_{m+n}\in \Gamma(X,\Oo_X)$ with $|f_i(x)| <1$ for all $x\in X$ and $i\in \{m+1,\ldots,m+n\}$ such that 
the functions $f_1,\ldots,f_{m+n}$ induce the morphism $X \to E^m\times B^n$.
Thanks to Theorem \ref{theo:bound}, $f_i \in \Ao$ for $i=1\ldots m+n$. 
Thanks to \cite[Theorem 2.13]{KappUni} there is a unique morphism of semi-affinoid $k$-algebras 
$\RmnT\otimes_Rk \to \A$ sending 
$T_i$ to $f_i$ for $i=1\ldots m$ and sending $S_i $ to $f_{m+i}$ for $i=1\ldots n$. 
It factorizes through $\B$ and gives the desired morphism of semi-affinoid $k$-algebras $\B \to \A$.
\end{proof}
 
\begin{example}
\label{rem counter example fully faithfull}
The functor from semi-affinoid $k$-algebras to $k$-analytic spaces is not fully faithful. 
Indeed,  as in Example \ref{remark example nonreduced1} let us consider
$\A  =  (R[[T_1,T_2]] \otimes_R k )/ (T_2^2)$ with associated $k$-analytic space $X$.
Let $g(T_1) \in k\llbracket T_1 \rrbracket$ be a power series which converges on the open unit disc, but such that 
$g(T_1) \notin R\llbracket T_1 \rrbracket \otimes_R k$.
Then $T_2 g(T_1) \in \Gamma(X,  \Oo_{X})$ and hence it defines a morphism of $k$-analytic spaces 
$\phi : X \to \mathbb{A}^{1, {\rm an}}_k$ whose image is the origin of $\mathbb{A}^{1, {\rm an}}_k$.
In particular it also defines a morphism $X \to E$ where $E$ is the closed unit disc over $k$ which is a $k$-affinoid space (with affinoid algebra $\B = k\langle T \rangle$).
But $\phi$ is not induced by a morphism of semi-affinoid $k$-algebras $\B \to \A$.
\end{example} 
 
\begin{rem}
\label{rem:biblio}
Theorem \ref{theo:SpecialIntegral} was already stated in A. J. de Jong's article \cite[Remark 7.4.2]{deJongCryst} without proof. 
Theorem \ref{theo:bound} is also stated in \cite[Lemma 2.14]{Nic2009} but its proof is obtained as a corollary of \cite[Remark 7.4.2]{deJongCryst}.
Corollary \ref{cor:equivalence} is also stated in \cite[Remark 2.56]{KappUni} without proof.
\end{rem} 

\begin{rem}
\label{remark reduced space}
Let $\A$ be a semi-affinoid $k$-algebra with associated $k$-analytic space $X$.
Let $\Ared \coloneqq \A / ({\rm nil} \ \A)$ be the reduced ring associated to $\A$.
Then $\Ared$ is a reduced semi-affinoid $k$-algebra.
Let $Y$ be the $k$-analytic space associated with $\Ared$.
Then $\A \to \Ared$ induces a closed immersion of $k$-analytic spaces $Y \to X$ which is a bijection of sets.
Let  $B$ be a special  $R$-model of $\Ared$.
Since $B \subset B\otimes_R k \simeq \Ared$, we deduce that $B$ is reduced.
Hence since $Y \simeq \Spf(B)_\eta$, by \cite[7.2.4 (c)]{deJongCryst}, $Y$ is reduced.
Hence $Y \simeq \Xred$.
\end{rem}

\section{Analytic functions and formal functions on tubes}
\label{section:Analytic functions and formal functions on tubes}

I am very grateful to Ofer Gabber for having pointed out  the reference to Proposition 6.14.4 of \cite{GroEGAIV2} 
which greatly simplifies the  proof of the following statement.

\begin{theo}
\label{theo:Completion}
Let $X$ be a $k$-analytic space associated with a reduced semi-affinoid $k$-algebra $\A$. 
Let $I\subset \Gamma(X,\Oo_X)$ be an ideal and let $U = \{x\in X \st |f(x)| <1 \ \forall f\in I\}$. Then 
\[\Gamma(X,\Oo_X)^{\wedge I} \simeq \Gamma(U,\Oo_{X}).\]
\end{theo}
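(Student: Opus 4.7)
The plan is to realize $\Gamma(X,\Oo_X)^{\wedge I}$ as the ring of global sections of $\Oo$ on another semi-affinoid $k$-analytic space, namely one whose associated special $R$-algebra has generic fibre equal to $U$; Theorem \ref{theo:SpecialIntegral} will then finish the job.

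Concretely, set $A := \Gamma(X,\Oo_X)$. By Corollary \ref{cor:reduced special}, $A$ is a reduced, $R$-flat special $R$-algebra that is integrally closed in $\A := A \otimes_R k$. In particular $A$ is Noetherian, so $I = (f_1, \ldots, f_r)$ for some elements $f_i$. I would then form the special $R$-algebra
\[ B := A\llbracket T_1, \ldots, T_r\rrbracket / (T_1 - f_1, \ldots, T_r - f_r), \]
endowed with the quotient topology. As an $R$-algebra $B$ is just $A$, but its topology is strictly finer: if $J_A$ denotes an ideal of definition of $A$, then $J_A + I$ is an ideal of definition of $B$. Since $A$ is already $J_A$-adically complete, $B$ is naturally identified with the $I$-adic completion $A^{\wedge I}$ of $A$.

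The core step is to check that $B$ fulfils the hypotheses of Theorem \ref{theo:SpecialIntegral}: reduced, $R$-flat, and integrally closed in $B \otimes_R k$. The first two properties follow from standard facts on $I$-adic completions of Noetherian rings together with the excellence of $A$ (Valabrega). The main obstacle, and the reason excellence is crucial, is the integral closure statement. Here I would invoke the result from EGA IV pointed out by Gabber (\cite{GroEGAIV2}, Proposition 6.14.4), which says that for an excellent Noetherian ring the formation of integral closure commutes with $I$-adic completion; since $A$ is integrally closed in $\A$, this gives that $B = A^{\wedge I}$ is integrally closed in $B \otimes_R k$.

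It remains to identify $\Spf(B)_\eta$ with $U$. By construction $J_A + I$ is an ideal of definition of $B$, so the natural morphism $\Spf(B) \to \Spf(A)$ of special formal $R$-schemes induces on generic fibres an open immersion whose image is precisely the locus where $f_1, \ldots, f_r$ are topologically nilpotent, i.e.\ $U$. Applying Theorem \ref{theo:SpecialIntegral} to $B$ then yields
\[ \Gamma(X, \Oo_X)^{\wedge I} \simeq B \simeq \Gamma(\Spf(B)_\eta, \Oo_{\Spf(B)_\eta}) \simeq \Gamma(U, \Oo_X), \]
which is the desired isomorphism.
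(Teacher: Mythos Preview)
Your proof is correct and follows essentially the same route as the paper's: set $A=\Gamma(X,\Oo_X)$, use Corollary~\ref{cor:reduced special} to see that $A$ is a reduced special $R$-algebra, present $A^{\wedge I}$ as $A\llbracket T_1,\ldots,T_r\rrbracket/(T_i-f_i)$, identify its generic fibre with $U$, invoke excellence of $A$ (Valabrega) together with \cite[Proposition~6.14.4]{GroEGAIV2} to transfer the integral-closure property to $A^{\wedge I}$, and conclude via Theorem~\ref{theo:SpecialIntegral}. The only cosmetic differences are that the paper cites \cite[Lemma~7.2.5]{deJongCryst} explicitly for the identification $\Spf(A^{\wedge I})_\eta\simeq U$ and spells out that the completion map $\Spec(A^{\wedge I})\to\Spec(A)$ is regular (hence normal) before applying 6.14.4, rather than paraphrasing the latter as ``integral closure commutes with completion''.
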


\begin{proof}
Let us set $A:= \Gamma(X,\Oo_X)$.
Thanks to Corollary \ref{cor:reduced special} (i), $A$ is a reduced special $R$-algebra.
Let us choose some functions $f_1,\ldots,f_n\in A$ such that $(f_1,\ldots,f_n)=I$.
Then 
\[A^{\wedge I} \simeq A\llbracket \rho_1,\ldots,\rho_n\rrbracket / (f_i-\rho_i)_{i=1,\ldots,n}\]
is a special $R$-algebra.
Let us set $\mathfrak{Y}= \Spf(A^{\wedge I})$. 
Then by \cite[Lemma 7.2.5]{deJongCryst} (or see also \cite[5.3.5,5.3.6]{LR_ring}), the induced morphism of $k$-analytic spaces
$\mathfrak{Y}_\eta \to X$ identifies 
$\mathfrak{Y}_\eta$ with $U$ as an analytic domain of $X$.
So $U$ is the $k$-analytic space associated to the semi-affinoid $k$-algebra $\B:= A^{\wedge I}\otimes_Rk$ and by definition, $A^{\wedge I}$ is a special $R$-model of $\B$. 
Thanks to Theorem \ref{theo:SpecialIntegral}, it only remains to show that $A^{\wedge I}$ is integrally closed in $A^{\wedge I} \otimes_Rk\simeq \B$.

Since $A$ is an excellent ring (this follows from \cite[Proposition 7]{Val75} when $\cha(k)=p>0$ and from \cite[Theorem 9]{Val76} when $\cha(k)=0$), 
the morphism $\Spec(A^{\wedge I}) \to \Spec(A)$ is regular thanks to \cite[Scholie 7.8.3 (v)]{GroEGAIV2}, 
so in particular, $\Spec(A^{\wedge I}) \to \Spec(A)$ is a normal morphism 
(we refer to  \cite[D\'{e}finition 6.8.1]{GroEGAIV2} for the definitions of normal and regular morphisms of schemes). \par 
Since $A= \Gamma(X,\Oo_X)$, it follows that $A$ is integrally closed in $A\otimes_Rk$. 
So thanks to \cite[Proposition 6.14.4]{GroEGAIV2}, 
$A^{\wedge I}$ is integrally closed in $A^{\wedge I}\otimes_Rk$.
Finally, thanks to Theorem \ref{theo:SpecialIntegral}
\[A^{\wedge I} \simeq \Gamma(U,\Oo_{X}).\]
\end{proof}

\section{Reduction and connectedness}
\label{section:ReductionConnectedness}
\subsection{Reduction}
Let $\A$ be a  semi-affinoid $k$-algebra and let $X$ be its associated $k$-analytic space.  We set 
\[
\Ao  =  \{f \in \A \st |f|_{\sup} \leq 1\}, \hspace{10pt} 
\Ah = \{f \in \A \st \forall x\in X  \  |f(x)| <1 \} , 
\]
\[
\Aoo = \{f \in \A \st |f|_{\sup} < 1\}, \hspace{10pt}
\At = \Ao / \Ah, \hspace{10pt}
\Atp =  \Ao / \Aoo.
\]
When $\A$ is a $k$-affinoid algebra, 
thanks to the maximum modulus principle \cite[6.2.1.4]{BGR}, $\Ah=\Aoo$, $\At=\Atp$ 
and in this case, $\At$ corresponds to the  reduction of $\A$ as defined in \cite[section 6.3]{BGR}. 
For a general semi-affinoid $k$-algebra, 
the maximum modulus principle does not hold\footnote{Actually, $\A$ is a $k$-affinoid algebra if and only if the maximum modulus principle holds \cite[5.3.8]{LR_ring}.} 
(consider for instance $S \in R\llbracket S\rrbracket \otimes_Rk$), so in general, one has a strict inclusion $\Aoo \subset \Ah$ and 
$\At$ is a strict quotient of $\Atp$.
If $\A= \RmnT\otimes_Rk$, then $\At = \tk[T_1,\ldots,T_m]$ and 
$\Atp = \tk[T_1,\ldots,T_m]\llbracket S_1,\ldots,S_n\rrbracket$. 

For a $k$-analytic space $X$, remind that we  defined  the subsheaf $\mathcal{O}_X^\circ \subset \mathcal{O}_X$  of analytic functions $f$ such that $|f(x)| \leq 1$ for all $x$.
Likewise, we denote by $\check{\mathcal{O}}_X \subset \mathcal{O}_X$  the subsheaf   of analytic functions $f$ such that $|f(x)| <1$ for all $x$.

\begin{lemma}
\label{lemma reduced reduction}
Let $\A$ be a semi-affinoid algebra, $\Ared$ the associated reduced semi-affinoid $k$-algebra.
Let $X$ be the $k$-analytic space associated with $\A$.
By Remark \ref{remark reduced space}, the $k$-analytic space associated with $\Ared$ can be identified with $\Xred$.
Then all the natural maps in the following commutative squares are isomorphisms of $R$-algebras
\begin{equation}
\label{square reduction}
\xymatrixcolsep{5pc}\xymatrix{
\Ao / \check{\A}  \ar[r]^{\alpha} \ar[d]_{\phi}   & (\Ared)^\circ / \check{(\Ared)}  \ar[d]^{\phi_{\rm red}} \\ 
\Gamma(X, \mathcal{O}^\circ_X) / \Gamma(X, \check{\mathcal{O}}_X) \ar[r]_{\beta} & 
\Gamma(\Xred, \mathcal{O}^\circ_\Xred) / \Gamma(\Xred, \check{\mathcal{O}}_\Xred)  
}
\end{equation}

\begin{equation}
\label{square reduction 2}
\xymatrix{
\Ao / \Aoo \ar[r] \ar[d]   & (\Ared)^\circ / (\Ared)^{\circ \circ} \ar[d] \\ 
\Gamma(X, \mathcal{O}^\circ_X) / 
\{ f\in \Gamma(X, \mathcal{O}_X)  \st |f|_{\sup} <1 \}
\ar[r]  & 
\Gamma(\Xred, \mathcal{O}^\circ_\Xred) /
\{ f\in \Gamma(\Xred, \mathcal{O}_\Xred)  \st |f|_{\sup} <1 \}
}
\end{equation}
\end{lemma}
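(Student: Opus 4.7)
The plan is to reduce everything to Theorem \ref{theo:bound} applied to the reduced semi-affinoid $k$-algebra $\Ared$. Two observations are in the background. First, any nilpotent $n \in \A$ satisfies $|n(x)| = 0$ for every $x \in X$, because evaluation takes values in a valued field; hence $\mathrm{nil}(\A) \subset \Aoo \subset \check{\A}$, and under the injection $\A \hookrightarrow \Gamma(X, \mathcal{O}_X)$ of Corollary \ref{cor:equivalence}, $\mathrm{nil}(\A)$ also sits inside $\Gamma(X, \check{\mathcal{O}}_X)$ and $\{f \in \Gamma(X, \mathcal{O}_X) \st |f|_{\sup} < 1\}$. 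Second, by Remark \ref{remark reduced space} the closed immersion $\Xred \hookrightarrow X$ is a bijection of underlying sets with identified residue fields, so for any $f \in \A$ with image $\bar f \in \Ared$ and a point $x \in X$ corresponding to $\bar x \in \Xred$, one has $|f(x)| = |\bar f(\bar x)|$.

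From these two facts the quotient map $\A \twoheadrightarrow \Ared$ restricts to surjections $\Ao \twoheadrightarrow (\Ared)^\circ$, $\Aoo \twoheadrightarrow (\Ared)^{\circ\circ}$, and $\check{\A} \twoheadrightarrow \check{(\Ared)}$, each with kernel $\mathrm{nil}(\A)$ sitting inside the relevant denominator; thus the top horizontal $\alpha$ of \eqref{square reduction} and its counterpart in \eqref{square reduction 2} are isomorphisms. The right-hand verticals are isomorphisms by Theorem \ref{theo:bound} applied to $\Ared$: this identifies $(\Ared)^\circ \simeq \Gamma(\Xred, \mathcal{O}^\circ_\Xred)$, $\check{(\Ared)} \simeq \Gamma(\Xred, \check{\mathcal{O}}_\Xred)$, and $(\Ared)^{\circ\circ} \simeq \{f \in \Gamma(\Xred, \mathcal{O}_\Xred) \st |f|_{\sup} < 1\}$.

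For the left vertical $\phi$ (and its analogue in the second square), injectivity follows from $\A \hookrightarrow \Gamma(X, \mathcal{O}_X)$: an $f \in \Ao$ whose image in $\Gamma(X,\mathcal{O}^\circ_X)$ has all values strictly less than $1$ already satisfies $|f(x)| < 1$ for every $x \in X$, hence lies in $\check{\A}$. For surjectivity, given $g \in \Gamma(X, \mathcal{O}^\circ_X)$ I would restrict to $\Xred$ to get an element of $\Gamma(\Xred, \mathcal{O}^\circ_\Xred) \simeq (\Ared)^\circ$, lift it to some $f \in \Ao$ via the surjection above, and observe that $g - \phi(f)$ restricts to $0$ on $\Xred$ and thus vanishes at every point of $X$, so it lies in $\Gamma(X, \check{\mathcal{O}}_X)$. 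The remaining arrows $\beta$ and its $\Aoo$-analogue are then forced to be isomorphisms by commutativity. The only conceptual point is the surjectivity of $\Ao \twoheadrightarrow (\Ared)^\circ$ and of $\phi$, both of which hinge on the preservation of sup norms under $\A \twoheadrightarrow \Ared$; everything else is a diagram chase assembled around Theorem \ref{theo:bound}.
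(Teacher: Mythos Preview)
Your argument is correct and follows a slightly different path from the paper's. Both proofs establish $\phi_{\rm red}$ via Theorem~\ref{theo:bound} and $\alpha$ via the observation that nilpotents have vanishing sup-norm. The difference lies in which of the remaining two arrows is handled directly. The paper proves $\beta$ first: it invokes Kiehl's Theorem~B for quasi-Stein spaces to conclude that $\Gamma(X,\O_X)\to\Gamma(\Xred,\O_{\Xred})$ is surjective (the nilradical being a coherent ideal sheaf), and then deduces $\phi$ from commutativity. You instead prove $\phi$ directly: given $g\in\Gamma(X,\O_X^\circ)$, you restrict to $\Xred$, identify the restriction with an element of $(\Ared)^\circ$ via Theorem~\ref{theo:bound}, lift it to $\Ao$ through the surjection $\Ao\twoheadrightarrow(\Ared)^\circ$, and observe that the difference lies in $\Gamma(X,\mathrm{rad}\,\O_X)\subset\Gamma(X,\check{\O}_X)$; then $\beta$ follows by commutativity. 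Your route is more elementary in that it sidesteps the cohomological input of Theorem~B, at the cost of leaning a second time on Theorem~\ref{theo:bound} (for the identification used in the lifting step). The paper's route, on the other hand, yields the stronger intermediate statement that $\Gamma(X,\O_X^\circ)\to\Gamma(\Xred,\O_{\Xred}^\circ)$ is genuinely surjective, not merely surjective modulo $\Gamma(X,\check{\O}_X)$.
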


\begin{proof}
Using Theorem \ref{theo:bound}, we get isomorphisms 
$(\Ared)^\circ \simeq \Gamma(\Xred, \mathcal{O}^\circ_\Xred)$ and 
\[\check{\Ared} \simeq \Gamma( \Xred, \check{\mathcal{O}}_\Xred)\]
 which imply that $\phi_{\rm red}$ is an isomorphism.
To prove that $\beta$ is an isomorphism, we first remark that $X$ is a quasi-Stein space in the sense of \cite[Definition 2.3]{Kie67}.
Moreover we have an isomorphism of ringed spaces $(\Xred, \mathcal{O}_\Xred) \simeq (X, \mathcal{O}_X / ({\rm rad } \ \mathcal{O}_X))$ 
where ${\rm rad} \ \mathcal{O}_X$ is the nilradical of $\mathcal{O}_X$. 
Since the latter is a coherent $\mathcal{O}_X$-ideal sheaf by \cite[Corollary 9.5.4]{BGR}, 
it follows from Theorem B for quasi-Stein spaces \cite[2.4.2]{Kie67} 
that 
$\Gamma(X, \mathcal{O}_X)  \to \Gamma(\Xred, \mathcal{O}_\Xred)$ is surjective.  
Hence $\Gamma(X, \mathcal{O}^\circ_X)  \to \Gamma(\Xred, \mathcal{O}^\circ_\Xred)$ is also surjective, and so $\beta$ is surjective.
The injectivity of $\beta$ follows easily from the fact that $X$ and $\Xred$ are in natural bijection.
So $\beta$ is an isomorphism.
Since $\Ao \to (\Ared)^\circ$ is surjective,  we deduce similarly that $\alpha$ is an isomorphism.
Since \eqref{square reduction} commutes, we  deduce that $\phi$ is also an isomorphism.
The proof for \eqref{square reduction 2} is analogous.
\end{proof}

\begin{lemma}
\label{lemma red fin gen}
Let $\A$ be a semi-affinoid $k$-algebra.
Then $\At$ is a reduced $\widetilde{k}$-algebra of finite type and $\Atp$ is a reduced $\widetilde{k}$-algebra which 
is a quotient of some $\tk[T_1,\ldots,T_m]\llbracket S_1,\ldots,S_n\rrbracket$.
\end{lemma}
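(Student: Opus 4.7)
The plan is to dispatch reducedness directly from the multiplicativity of Berkovich seminorms, reduce both structural assertions to the case of reduced $\A$ via Lemma~\ref{lemma reduced reduction}, and then exploit the fact that for reduced $\A$ the ring $\Ao$ is itself a special $R$-algebra by Corollary~\ref{cor:reduced special}(i). Reducedness of $\At$ and $\Atp$ is immediate: each point $x \in X$ is a multiplicative seminorm on $\A$, so $|f^n(x)| = |f(x)|^n$ and therefore $|f^n|_{\sup} = |f|_{\sup}^n$; hence $f^n \in \Aoo$ (respectively $\Ah$) forces $f \in \Aoo$ (respectively $\Ah$), with no hypothesis on $\A$ being reduced.

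For the structural claims, the isomorphisms in Lemma~\ref{lemma reduced reduction} identify $\Ao/\Ah$ and $\Ao/\Aoo$ with the analogous quotients built from $\Ared$, so I may replace $\A$ by $\Ared$ and assume $\A$ reduced. Corollary~\ref{cor:reduced special}(i) then identifies $\Ao$ with $\Gamma(X,\Oo_X)$ and guarantees that this ring is a special $R$-algebra. I would fix a presentation $\Ao \simeq \RmnT / I$ for some integers $m, n$ and some ideal $I$; via Berthelot's construction this exhibits $X \simeq \Spf(\Ao)_\eta$ as a closed analytic subspace of $\Spf(\RmnT)_\eta \simeq E^m \times B^n$. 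Two inclusions now follow by inspection: $\pi \in \Aoo$ since $|\pi|_{\sup} = |\pi| < 1$, and for each $j$ the image of $S_j$ in $\Ao$ lies in $\Ah$ since $|S_j(x)| < 1$ on the open polydisc factor (in general $S_j \notin \Aoo$, as the example $\A = R\llbracket S \rrbracket \otimes_R k$ already shows).

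It then remains to chase quotients. From $\pi \Ao \subseteq \Aoo$, $\Atp$ is a quotient of $\Ao/\pi\Ao \simeq \RmnT/(\pi, I)$, which in turn is a quotient of $\RmnT/\pi\RmnT \simeq \tk[T_1,\ldots,T_m]\llbracket S_1,\ldots,S_n\rrbracket$, giving the second statement. Similarly $(\pi, S_1,\ldots,S_n)\Ao \subseteq \Ah$, so $\At$ is a quotient of $\Ao/(\pi, S_1,\ldots,S_n)\Ao \simeq \RmnT/(\pi, S_1,\ldots,S_n, I)$, itself a quotient of $\tk[T_1,\ldots,T_m]$, hence of finite type over $\tk$. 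There is no real obstacle here; the substantive work has been done in Lemma~\ref{lemma reduced reduction} and Corollary~\ref{cor:reduced special}, and the only mild subtlety is that the two reductions $\At$ and $\Atp$ must be controlled via different ideals because in general $S_j$ lands only in $\Ah$ and not in $\Aoo$.
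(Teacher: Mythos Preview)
Your proof is correct and follows essentially the same route as the paper: reduce to $\A$ reduced via Lemma~\ref{lemma reduced reduction}, use that $\Ao$ is then a special $R$-algebra to get a presentation $\Ao \simeq \RmnT/I$, and observe that $(\pi,S_1,\ldots,S_n)$ lands in $\Ah$ while $(\pi)$ lands in $\Aoo$. Two minor remarks: the paper cites \cite[Corollary~2.11]{KappUni} directly for the fact that $\Ao$ is special, rather than routing through Corollary~\ref{cor:reduced special}(i) (and the identification $\Ao \simeq \Gamma(X,\Oo_X)$ you attribute to that corollary is really Theorem~\ref{theo:bound}); and you are actually more explicit than the paper about the reducedness argument, which the paper leaves implicit.
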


\begin{proof}
By Lemma \ref{lemma reduced reduction}, $\At \simeq \widetilde{\Ared}$, so we can replace $\A$ by $\Ared$ and assume that $\A$ is reduced. 
By \cite[Corollary 2.11]{KappUni},  under this assumption 
 $\Ao$ is a special $R$-algebra, so there is an isomorphism 
\[ \Ao \simeq \RmnT/I\]
for some ideal $I$ of $\RmnT$.  
The ideal $\Ah$ then contains the ideal generated by the image of $(\pi,S_1,\ldots,S_n)$ modulo $I$.
Hence $\At$ is a quotient of $\widetilde{k}[T_1,\ldots,T_m]$ proving the first part of the lemma.
Likewise $\Aoo$ contains the ideal $(\pi)$, hence we get a surjective map 
\[{\tk[T_1,\ldots,T_m]\llbracket S_1,\ldots,S_n\rrbracket} \to \Atp.\]
\end{proof}

\begin{rem}
\label{remark biggest ideal of definition}
If $\A$ is a reduced semi-affinoid $k$-algebra, 
then  $\Ah$ is the biggest ideal of definition of the special $R$-algebra $\Ao$ (see \cite[Remark 2.8]{KappUni}).
Likewise, one can prove easily that $\Aoo = {\rm rad} \ (\pi)$.
\end{rem}

\begin{art}
\label{specialization}
Let $A$ be an arbitrary special $R$-algebra and let $\Xf \coloneqq \Spf(A)$.
We denote by $\Xf_s \coloneqq \Spec(A/J)$ where $J$ is the biggest ideal of definition of $A$.
Then  there is a \emph{specialization map}  $\spe_{\Xf} : \Xf_\eta \to \Xf_s$ 
which is defined in \cite[7.1.10]{deJongCryst} on the subset of rigid points $\Xf^{\rm rig}$ and in general in \cite[$\S$ 1 p. 371]{Berko96} 
(beware that in  \cite{Berko96} the map $\spe_\Xf$ is called the reduction map).
\end{art}

\begin{defi}
\label{defi:reduction}
Let $X$ be a semi-affinoid $k$-analytic space coming from a semi-affinoid $k$-algebra $\A$. 
We set $\widetilde{X}= \Spec(\At)$.
We call $\tX$ the canonical reduction of $X$. 
According to Lemma \ref{lemma red fin gen}, it is a reduced $\tk$-scheme of finite type. 
If  $Y$ is a semi-affinoid $k$-analytic spaces and $\varphi : X \to Y$ is a morphism of $k$-analytic spaces, then we get an associated morphism 
$\varphi^* : \Gamma(Y, \mathcal{O}^\circ_Y) \to \Gamma(X, \mathcal{O}^\circ_X)$. 
Hence by Lemma \ref{lemma reduced reduction}, we can functorially associate a morphism  $\widetilde{\varphi} : \widetilde{X} \to \widetilde{Y}$.
If $\varphi$ comes from a morphism of semi-affinoid $k$-algebras $\psi : \B \to \A$, then  $\widetilde{\varphi}$ is induced by 
$\widetilde{\psi} : \widetilde{\B} \to \widetilde{\A}$.
\end{defi}

\begin{art}
\label{reduction map}
As in \cite[section 2.4]{Berko90} one can define a canonical reduction map $\red : X \to \widetilde{X}$ in the following way. 
If $x\in X$ we obtain an associated map $\chi_x : \A \to \H(x)$, which gives rise to a map 
$\widetilde{\chi_x}: \At \to \widetilde{\H(x)}$. 
Then 
\[
\begin{array}{cccc}
\red : & X & \to & \tX \\
& x & \mapsto & \Ker(\widetilde{\chi_x})
\end{array}
\]
To stress the dependence in $X$, we might write $\red_X$ instead of $\red$.
Similarly as in \cite[Corollary 2.4.3]{Berko90} one can check that the canonical reduction map $\red$ is anticontinuous, i.e. the inverse image of an open set is closed.
\end{art}

\begin{rem}
\label{remark reduction reduce commutes}
Identifying $\Xred$ with the $k$-analytic space associated with $\Ared$ (see Remark \ref{remark reduced space}), 
and using Lemma  \ref{lemma reduced reduction}, we get a commutative diagram 
\[
\xymatrix{
\Xred \ar[r] \ar[d]_{{\red}_{\Xred}} & X \ar[d]^{{\red}_X} \\
\widetilde{\Xred} \ar@{=}[r] & \tX
}
\]
\end{rem}

\begin{rem}
\label{remark reduction formal}
Let $A$ be a special $R$-model of the semi-affinoid $k$-algebra $\A$, with associated $k$-analytic space $X$. 
Let $J$ be the biggest ideal of definition $A$.
The injection $A \to \Ao$ induces an injective morphism of $\tk$-algebras $\varphi \colon A/J \to \At$ which induces a morphism $\iota \colon \tX \to \Xf_s$.
If $\Xf \coloneqq \Spf(A)$ we get a commutative diagram 
\[
\xymatrix{
      &X \simeq  \Xf_\eta \ar[ld]_{ {\rm red}} \ar[rd]^{\spe_\Xf}  & \\
      \tX \ar[rr]_\iota & & \Xf_s
}
\]
If $\A$ is  reduced, then $A \coloneqq \Ao$ is a special $R$-model and in that case $J = \Ah$ (see Remark \ref{remark biggest ideal of definition}), 
hence in that case, $\varphi$ and $\iota$ are isomorphisms. 
In general, since $\At$ is a finitely generated $\tk$-algebra, we can find  $f_1,\ldots,f_m \in \Ao$ such that 
$\widetilde{f_1}, \ldots, \widetilde{f_m}$ generate the $\tk$-algebra $\At$.
By \cite[Corollary 2.12]{KappUni}, $A' \coloneqq A[f_1,\ldots,f_m] \subset \Ao$ is still a special $R$-model of $\A$, and by construction, if $J'$ is the biggest ideal of definition of $A'$, 
the map  $\varphi : A'/J' \to \At$ is surjective.
Since $\varphi$ is injective anyway, it is an isomorphism.
In conclusion, if $X$ is semi-affinoid $k$-analytic space, one can always find a special $R$-scheme $\Xf = \Spf(A)$ which is a model of $X$ 
such that $\tX \simeq \Xf_s$ and such that ${\rm red}$ can be identified with $\spe_\Xf$.
\end{rem}

\begin{cor}
\label{cor:Tubes1}
Let $X$ be a  semi-affinoid $k$-analytic space.
Let $Z$ be a Zariski-closed subset of $\tX$ and let $Y:= \red^{-1}(Z)$. 
Then  $Y$ is a  semi-affinoid $k$-analytic space and the naturally induced map $\widetilde{Y} \to \tX$ is a closed immersion with image $Z$.
\end{cor}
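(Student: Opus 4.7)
The plan is to realize $Y$ as the generic fibre of a formal completion of a suitable special $R$-model of $\A$. Using Remark \ref{remark reduction formal}, first choose a special $R$-model $A$ of $\A$ whose biggest ideal of definition $J$ satisfies $A/J \simeq \At$, so that $\tX \simeq \Xf_s$ (with $\Xf := \Spf(A)$) and the canonical reduction map $\red_X$ coincides with the specialization map $\spe_\Xf$. Let $\widetilde{I} \subset A/J$ be the radical ideal cutting out $Z$, and lift a finite generating set of $\widetilde{I}$ to elements $f_1, \ldots, f_n \in A$. Then
\[ Y = \red^{-1}(Z) = \spe_\Xf^{-1}(Z) = \{x \in X \st |f_i(x)| < 1 \ \forall i\}. \]
Setting $B := A^{\wedge (f_1, \ldots, f_n)}$, the same argument as in the proof of Theorem \ref{theo:Completion} (via \cite[Lemma 7.2.5]{deJongCryst}) identifies $\Spf(B)_\eta$ with $Y$ as an analytic domain of $X$. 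This establishes that $Y$ is a semi-affinoid $k$-analytic space with semi-affinoid algebra $\B := B \otimes_R k$.

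Writing $B \simeq A\llbracket \rho_1, \ldots, \rho_n\rrbracket/(f_i - \rho_i)$, a direct calculation shows that the biggest ideal of definition of $B$ is $J_B = JB + (f_1, \ldots, f_n)B$, so $B/J_B \simeq (A/J)/\widetilde{I}$ is precisely the coordinate ring of $Z$ viewed as a closed subscheme of $\tX$. Applying Remark \ref{remark reduction formal} now to $B$, the canonical injection $B/J_B \hookrightarrow \Bt$ induces a morphism of $\tk$-schemes $\tY \to Z$, and by functoriality of the canonical reduction (Definition \ref{defi:reduction}) the composition $\tY \to Z \hookrightarrow \tX$ is exactly the map induced by $Y \hookrightarrow X$. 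It therefore suffices to show that $B/J_B \to \Bt$ is an isomorphism.

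When $\A$ is reduced this is immediate: by Corollary \ref{cor:reduced special} we may take $A = \Ao$, Theorem \ref{theo:Completion} gives $B = \B^\circ$, the completion of the reduced excellent ring $\Ao$ along $(f_1, \ldots, f_n)$ is again reduced (by the regularity of the completion morphism, exactly as in the proof of Theorem \ref{theo:Completion}), so $\B$ is reduced; Remark \ref{remark biggest ideal of definition} then identifies $\check{\B}$ with $J_B$, whence $\Bt = \B^\circ/\check{\B} = B/J_B$. The general case reduces to the reduced one by combining Lemma \ref{lemma reduced reduction} (which gives $\Bt \simeq \widetilde{\B_{\rm red}}$ and $\At \simeq \widetilde{\Ared}$) with the identification of $\B_{\rm red}$ as the semi-affinoid algebra of the tube $\red_{\Xred}^{-1}(Z) \subset \Xred$, to which the reduced case applies. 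The main obstacle I anticipate is precisely this last identification: Remark \ref{remark reduction reduce commutes} shows that $Y_{\rm red}$ and $\red_{\Xred}^{-1}(Z)$ have the same underlying point set inside $X$, and one must promote this to an isomorphism of reduced semi-affinoid spaces, for which the full faithfulness of the functor on reduced algebras (Corollary \ref{cor:equivalence}) is the key tool.
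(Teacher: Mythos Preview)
Your proposal is correct and follows essentially the same route as the paper: reduce to the case where $\A$ is reduced, take $A=\Ao$ as the special $R$-model, identify $Y$ with $\Spf((\Ao)^{\wedge I})_\eta$, invoke Theorem~\ref{theo:Completion} to get $\Bo=(\Ao)^{\wedge I}$, and then read off $\Bt$ from the ideals of definition. The only real differences are in the ordering and bookkeeping: the paper reduces to $\A$ reduced at the outset (via Remark~\ref{remark reduction reduce commutes}) and then works entirely with $\Ao$, whereas you first establish semi-affinoidness of $Y$ for arbitrary $\A$ using a model from Remark~\ref{remark reduction formal} and postpone the reduction to the computation of $\widetilde{Y}$. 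In the reduced case the two arguments are the same up to phrasing: the paper notes that $\Ah+I$ is \emph{an} ideal of definition of $\Bo$ and concludes $\Bt=\big((\Ao)^{\wedge I}/(\Ah+I)\big)_{\rm red}=(\At/\widetilde I)_{\rm red}$, while you observe that since $\widetilde I$ is radical the quotient is already reduced, so $JB+(f_i)B$ is in fact the \emph{biggest} ideal of definition.

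Your anticipated obstacle is milder than you suggest. The identification $Y_{\rm red}\simeq \red_{\Xred}^{-1}(Z)$ does not require Corollary~\ref{cor:equivalence}: since $Y\hookrightarrow X$ is an analytic domain, passing to nilradical quotients shows that $Y_{\rm red}$ is the analytic domain of $\Xred$ with the same underlying set, namely $\red_{\Xred}^{-1}(Z)$. This is exactly what the paper's one-line reduction ``we can replace $\A$ by $\Ared$'' is implicitly using.
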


\begin{proof}
Let $\A$ be the semi-affinoid $k$-algebra of $X$.
Using Remark \ref{remark reduction reduce commutes}, we can replace $\A$ by $\Ared$ and assume that $\A$ is reduced.
Under this assumption, by \cite[Corollary 2.11]{KappUni}, $\Ao$ is a special $R$-algebra, and 
$X \simeq \Spf(\Ao)_\eta$.
Let $I$ be an ideal of $\Ao$ such that $Z = V(\widetilde{I})$ where $\widetilde{I}=\{\widetilde{f} \st f\in I\}$. 
Then as already seen in the proof of Theorem \ref{theo:Completion},  $Y \simeq \Spf((\Ao)^{\wedge I})_\eta$, hence $Y$ is the $k$-analytic space associated to the semi-affinoid $k$-algebra 
$\B \coloneqq (\Ao)^{\wedge I} \otimes_R k$.
Then according to Theorem \ref{theo:Completion}, 
$\Bo = (\Ao)^{\wedge I}$. 
Since $\Ah$ is an ideal of definition of $\Ao$ \cite[2.8]{KappUni}, 
$\Ah + I$ is also an ideal of definition of $\Bo$.
Since $\Bh$ is the biggest ideal of definition of $\Bo$ \cite[2.8]{KappUni}, it follows that 
\[\Bt = \Bo / \Bh =  \left((\Ao)^{\wedge I} / ( \Ah + I)\right)_{\red} = (\At / I)_{\red}.\] 
\end{proof}

\begin{rem}
\label{rem:reduction compatibility}
In general, when $U$ is an affinoid domain of the $k$-affinoid space $X$, it is difficult to describe the induced canonical reduction map 
$\widetilde{U}\to \widetilde{X}$.
However, it is proved in \cite[7.2.6.3]{BGR} that when $U$ is the tube of a principal open subset of $\tX$ (i.e. when $U = \red^{-1}(D(\widetilde{f}))$ for some $f\in \Ao$), then 
$\widetilde{U}\to \widetilde{X}$ is an open immersion with image $D(\widetilde{f})$. 
The above Corollary is the counterpart of \cite[7.2.6.3]{BGR} for Zariski-closed subsets of the canonical  reduction.
\end{rem}

\begin{lemma}
\label{lemma:SurjectivityReduction}
Let $A$ be an $R$-flat special $R$-algebra and let $\Xf = \Spf(A)$.
Then the specialization  map $\spe_\Xf : \Xf_\eta \to \Xf_s$ is surjective. 
\end{lemma}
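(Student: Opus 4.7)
The plan is to lift each point of $\Xs$ to an $L^\circ$-point of the formal scheme $\Xf$ for an appropriate complete valued field $L/k$, which by the generic fibre functor then produces a point of $\Xg$ with the desired specialization.

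Given $\xi \in \Xs$, first translate it into a prime ideal $\mathfrak{p} \subset A$ containing the largest ideal of definition $J$; since $\pi \in J \subset \mathfrak{p}$ and $A$ is $R$-flat (so $\pi$ is a non-zero-divisor), the height of $\mathfrak{p}$ is at least $1$. Then I would pass to the completed localization $B := \widehat{A_\mathfrak{p}}$, which is a complete Noetherian local ring of dimension $\mathrm{ht}(\mathfrak{p}) \geq 1$ with residue field $\kappa(\mathfrak{p})$. By dimension theory one can choose a prime $\mathfrak{q} \subset B$ of coheight $1$, so that $B/\mathfrak{q}$ is a one-dimensional complete Noetherian local domain whose residue field is still $\kappa(\mathfrak{p})$.

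Since $A$ is excellent (Valabrega, \cite{Val75,Val76}), so is $B/\mathfrak{q}$; hence the integral closure $L^\circ$ of $B/\mathfrak{q}$ in its fraction field $L$ is a finite $B/\mathfrak{q}$-module, and being a one-dimensional normal complete Noetherian local domain it is a complete discrete valuation ring. The field $L$ is then a complete valued field extending $k$ (with $\pi \in \mathfrak{m}_L$), and the composite
\[
\chi \colon A \longrightarrow A_\mathfrak{p} \longrightarrow B \longrightarrow B/\mathfrak{q} \hookrightarrow L^\circ
\]
is a continuous $R$-algebra homomorphism, since $J \subset \mathfrak{p}$ maps into the maximal ideal of $B$ and hence into $\mathfrak{m}_L$. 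It therefore defines a morphism of formal $R$-schemes $\Spf(L^\circ) \to \Xf$, whose generic fibre picks out a point $x \in \Xg$. By functoriality of the specialization map, $\spe_\Xf(x)$ equals the image in $\Xs$ of the closed point of $\Spf(L^\circ)_s = \Spec(\widetilde{L})$, and unwinding the construction this image is $\ker(A \to \widetilde{L}) = \mathfrak{p}$, i.e., $\xi$.

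The main obstacle I expect is the finiteness of the normalization of $B/\mathfrak{q}$, for which I rely crucially on the excellence of $A$ (and thus of $B/\mathfrak{q}$); without this input one would only obtain a higher-rank valuation in general, and an auxiliary argument would be needed to build a rank-one valued field $L$. Once $L^\circ$ is produced, the verification that $\chi$ is continuous and that $\spe_\Xf(x) = \xi$ is a direct matter of tracing the definitions of the generic and special fibres.
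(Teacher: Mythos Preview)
Your approach is genuinely different from the paper's and essentially works, but there is one real gap that you should patch.

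\medskip
\textbf{The gap.} When you pick a prime $\mathfrak{q}\subset B=\widehat{A_\mathfrak{p}}$ of coheight $1$, you must also arrange that $\pi\notin\mathfrak{q}$. Otherwise $\pi$ maps to $0$ in $L^\circ$, the map $R\to L^\circ$ is not local into a DVR extending $R$, and $\Spf(L^\circ)_\eta$ is empty; so no point of $\Xg$ is produced. You implicitly assume this when you write ``$L$ is a complete valued field extending $k$'', but nothing in ``choose a prime of coheight $1$'' guarantees it. The fix is standard: first pass to a minimal prime $\mathfrak{q}_0$ of $B$ (which cannot contain the non-zero-divisor $\pi$), so that $C:=B/\mathfrak{q}_0$ is a complete local domain with $0\neq\pi\in\mathfrak{m}_C$; complete $\pi$ to a system of parameters $\pi,x_2,\ldots,x_e$ of $C$; and take $\mathfrak{q}$ to be (the preimage of) a minimal prime over $(x_2,\ldots,x_e)$ of coheight $1$. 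Then $\pi\notin\mathfrak{q}$, since otherwise $\mathfrak{q}$ would contain an $\mathfrak{m}_C$-primary ideal. With this adjustment your normalization argument goes through (any complete Noetherian local ring is excellent, so finiteness of the normalization is automatic), and the verification that $\spe_\Xf(x)=\xi$ is exactly as you sketch.

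\medskip
\textbf{Comparison with the paper.} The paper does not build a DVR by hand. Instead, given $\widetilde{x}\in\Xs$ of residual transcendence degree $r$, it base-changes along $R\to K^\circ$ with $K$ the Gauss completion of $k(U_1,\ldots,U_r)$, so that $\widetilde{x}$ becomes a \emph{closed} point of $(\Xf\times_{\Spf R}\Spf K^\circ)_s$; it then invokes the known surjectivity of specialization onto closed points for special formal $K^\circ$-schemes (from \cite{KappUni}) and pushes the resulting point back down to $\Xg$. Your argument trades this base-change/functoriality package for a direct commutative-algebra construction (localize, complete, cut down to dimension $1$, normalize). The paper's route is cleaner bookkeeping-wise and reuses an existing lemma; your route is more self-contained and produces the lifting field $L$ explicitly, at the cost of the small dimension-theory step above.
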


\begin{proof}
This proof is strongly inspired by the proof of Lemme 1.2 in \cite{Poi}.
Let $\widetilde{x} \in \Xf_s$, and let $r$ be the transcendence degree of the field extension 
$\widetilde{k}(\widetilde{x}) / \widetilde{k}$.
Let $K$ be the completion of $k(U_1,\ldots,U_r)$ with respect to the Gauss norm.
Then $K$ is a discretely valued non-archimedean field extension of $k$, and $\widetilde{K} \simeq \widetilde{k}(U_1,\ldots,U_r)$.
Hence $\widetilde{x}$ is the image of a closed point $\widetilde{y}\in \Xf_s \times_{\Spec(\widetilde{k})} \Spec(\widetilde{K})$ 
with respect to the canonical map $\Xf_s \times_{\Spec(\widetilde{k})} \Spec(\widetilde{K}) \to \Xf_s$.

Using base change extension induced by the inclusion $R \to K^\circ$ (see \cite[7.2.6]{deJongCryst}), 
 \[\Xf' \coloneqq \Xf \times_{\Spf(R)} \Spf(K^\circ)\]
is a special formal scheme over $K^\circ$ of the form $\Xf' = \Spf(A')$ where $A' \coloneqq A \widehat{\otimes}_R K^\circ$.
Let $J$ be the biggest ideal of definition of $A$.
Then $JA'$ is an ideal of definition of $A'$ and $J' \coloneqq {\rm rad} ( J A')$ is the biggest ideal of definition of $A'$.
There is a well-defined morphism of $\tk$-algebras 
$\alpha \colon A/J \otimes_{\tk} \widetilde{K} \to A' / J'$ defined by 
$\widetilde{a}\otimes \widetilde{\lambda} \mapsto \widetilde{a\otimes \lambda}$ 
for $a\in A$, $\lambda \in K^\circ$ and where $\widetilde{\cdot}$ stands for the various residue maps.
We claim that $\alpha$ induces an isomorphism 
\begin{equation}
\label{eq:reduction extension}
( A/J \otimes_{\tk} \widetilde{K})_{\rm red} \simeq A' / J'.
\end{equation}
Indeed by construction $\alpha$ is surjective, so it remains to prove that $\ker \alpha$ is the nilradical of $A/J \otimes_{\tk} \widetilde{K}$.
There is also a well-defined surjective map $\beta \colon A' \to  A/J \otimes_{\tk} \widetilde{K}$ defined by $a \otimes \lambda \mapsto \widetilde{a} \otimes \widetilde{\lambda}$, 
and by construction $\alpha \circ \beta \colon A' \to A' /J'$ is the quotient map. 
So let $\widetilde{z} = \sum_i \widetilde{a_i} \otimes \widetilde{\lambda_i} \in \ker \alpha$ for some $a_i \in A$ and $\lambda_i \in K^\circ$ 
and let  $z \coloneqq \sum_i a_i \otimes \lambda_i \in A'$. 
Then $\beta(z) = \widetilde{z}$, hence $z\in \ker \alpha \circ \beta$ hence $z \in J'$. 
Since $J' \coloneqq {\rm rad} ( J A')$ we have $z^n \in J A'$ for some $n\in \N^*$.
Hence $\beta(z^n)=0$, hence $\widetilde{z}^n=0$, thus proving \eqref{eq:reduction extension}.
We get a natural composite morphism 
\[
\iota \colon (\Xf')_s \xrightarrow[]{\iota_1} \Xf_s \times_{\Spec(\widetilde{k})} \Spec(\widetilde{K}) \xrightarrow[]{\iota_2} \Xf_s
\]
where $\iota_1$ is induced by \eqref{eq:reduction extension} and hence is bijective, and $\iota_2$ is the canonical map.
Hence we can identify $\widetilde{y}$ with a closed point of $(\Xf')_s$.
We also get a commutative diagram of sets
\[\xymatrix{
(\Xf')_\eta   \ar[r]^\alpha \ar[d]_{\spe_{\Xf'}} & (\Xf)_\eta \ar[d]^{\spe_\Xf} \\
(\Xf')_s    \ar[r]_{\iota}& \Xf_s
}
\]
Thanks to \cite[Lemma 2.3, Remark 2.5]{KappUni} we know that $\spe_{\Xf'}$ induces a surjective map from the set of rigid points 
$(\Xf')^{\rm rig}$   to the set of closed points of $(\Xf ')_s$.  
Hence we can find $y \in (\Xf ')_\eta$ such that $\spe_{\Xf '}(y) = \widetilde{y}$. 
In conclusion, if $x \coloneqq \alpha(y)$ we get $\spe_\Xf(x) = \widetilde{x}$, proving the surjectivity of $\spe_{\Xf}$.
\end{proof}

\begin{cor}
\label{corollary surjectivity reduction}
Let $X$ be a semi-affinoid $k$-analytic space.
Then the canonical reduction map $\red : X \to \widetilde{X}$ is surjective.
\end{cor}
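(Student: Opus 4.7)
The plan is to deduce this immediately from the surjectivity of the specialization map already proved in Lemma~\ref{lemma:SurjectivityReduction}, using Remark~\ref{remark reduction formal} as a bridge between the canonical reduction map of a semi-affinoid $k$-analytic space and the specialization map of a suitable formal model. All the heavy lifting has really been done in those two preceding results.

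First I would invoke Remark~\ref{remark reduction formal}: for the semi-affinoid $k$-algebra $\A$ underlying $X$, one can choose a special $R$-model $A$ of $\A$ with the property that, setting $\Xf = \Spf(A)$, the scheme $\Xf_s$ is canonically identified with $\tX$ and under this identification the canonical reduction map $\red_X \colon X \to \tX$ coincides with the specialization map $\spe_\Xf \colon \Xf_\eta \to \Xf_s$. Concretely, one lifts a finite set of generators of the $\tk$-algebra $\At$ to elements $f_1, \ldots, f_m$ of $\Ao$ and passes to the model $A' = A[f_1, \ldots, f_m]$; this works regardless of whether $\A$ is reduced.

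Second, I would observe that a special $R$-model is $R$-flat by definition. Hence Lemma~\ref{lemma:SurjectivityReduction} applies to $A$ and yields that $\spe_\Xf$ is surjective. Composing these two observations immediately gives the surjectivity of $\red_X$.

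I do not foresee any obstacle: the only point that deserves a moment of care is checking that Remark~\ref{remark reduction formal} really provides a model for which $\red_X$ and $\spe_\Xf$ can be identified in the non-reduced case, but this is precisely what is established there by the trick of enlarging the model with lifts of generators of $\At$.
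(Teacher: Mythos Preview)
Your proposal is correct and is exactly the argument the paper gives: the proof in the paper simply reads ``This follows from Lemma~\ref{lemma:SurjectivityReduction} and Remark~\ref{remark reduction formal}.'' Your additional observation that a special $R$-model is $R$-flat by definition (so that Lemma~\ref{lemma:SurjectivityReduction} applies) is a useful clarification, but no new idea is needed.
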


\begin{proof}
This follows from Lemma \ref{lemma:SurjectivityReduction} and Remark \ref{remark reduction formal}.
\end{proof}

\subsection{Connected components}
In this subsection, we consider a  semi-affinoid $k$-algebra $\A$ with associated $k$-analytic space $X$.
It follows from Theorem \ref{theo:bound} and Remark \ref{remark reduced space} that $\Spec(\A)$ is connected if and only if $X$ is connected.
Indeed by Remark \ref{remark reduced space}, we can assume that $\A$ is reduced, and we conclude since 
the connected components of $X$ are in correspondence with the set of idempotents of 
$\Gamma(X,\O_X)$ which themselves are equal to the set of idempotents of $\{ f\in \Gamma(X, \O_X) \st \ |f| < \infty\} =\A$.
From the Noetherianity of $\A$ it follows that $\A$ can be uniquely decomposed as 
\begin{equation}
\label{eq:dec1}
\A \simeq \A_1 \times \cdots \times \A_n
\end{equation} 
where each $\A_i$ is a semi-affinoid $k$-algebra such that $\Spec(\A_i)$ is connected. 
If we denote by $X_i$ the $k$-analytic space associated to $\A_i$ it follows that 
\begin{equation}
\label{eq:dec2}
X= X_1\coprod \cdots \coprod X_n
\end{equation} 
is the decomposition of $X$ in connected components. 
These remarks easily imply the following.

\begin{lemma}
\label{lemma:ReductionConnected}
A semi-affinoid $k$-analytic space $X$ is connected if and only if $\tX$ is connected. 
\end{lemma}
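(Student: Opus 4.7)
The plan is to prove the two implications separately. For the direction that $\tX$ connected implies $X$ connected, I would exploit the canonical reduction map $\red : X \to \tX$ and two properties established in the previous subsection: its surjectivity (Corollary~\ref{corollary surjectivity reduction}) and its anticontinuity (\ref{reduction map}). Suppose $\tX = Z_1 \sqcup Z_2$ with $Z_1, Z_2$ disjoint, non-empty and clopen. By anticontinuity each $\red^{-1}(Z_i)$ is closed (as preimage of an open) and open (as preimage of a closed); surjectivity forces both to be non-empty. This exhibits a clopen partition of $X$, so $X$ is disconnected.

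For the converse, I would transfer a non-trivial idempotent from $\A$ to $\At$. Assume $X$ is disconnected. By Remark~\ref{remark reduced space} together with Lemma~\ref{lemma reduced reduction}, the underlying set of $X$ coincides with that of $\Xred$ and $\widetilde{\Xred} \simeq \tX$, so I may replace $\A$ by $\Ared$ and assume $\A$ is reduced. The decomposition \eqref{eq:dec1}--\eqref{eq:dec2} preceding the statement then provides an idempotent $e \in \A$ with $0 \neq e \neq 1$. Since $e^2 = e$, for every $x \in X$ the value $e(x) \in \H(x)$ lies in $\{0,1\}$; in particular $|e(x)| \in \{0,1\}$, so both $e$ and $1-e$ belong to $\Ao$. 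To see that $\widetilde{e} \in \At$ is a non-trivial idempotent, note that $\widetilde{e} = 0$ would mean $|e(x)|<1$ for all $x$, hence $e(x) = 0$ for all $x$, and via the injection $\A \hookrightarrow \Gamma(X,\O_X)$ coming from Theorem~\ref{theo:bound} this would force $e = 0$; the symmetric argument applied to $1-e$ gives $\widetilde{e} \neq 1$. Consequently $\At$ has a non-trivial idempotent and $\tX = \Spec(\At)$ is disconnected.

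The only real content is the reduction-of-idempotents step in the second paragraph; the topological implication is a purely formal consequence of the surjectivity and anticontinuity of $\red$. I do not anticipate any genuine obstacle, since both the decomposition of $\A$ into connected factors and the identification $\widetilde{\Xred} \simeq \tX$ have already been recorded.
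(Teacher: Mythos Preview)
Your argument is correct and essentially matches the paper's: both use surjectivity plus anticontinuity of $\red$ for one implication, and the decomposition \eqref{eq:dec1}--\eqref{eq:dec2} (which you phrase in the equivalent language of idempotents) for the other. One cosmetic slip: in your first paragraph you announce the implication ``$\tX$ connected $\Rightarrow$ $X$ connected'' but then assume $\tX$ disconnected and deduce $X$ disconnected, which is the contrapositive of the \emph{other} direction; your second paragraph then handles the remaining implication, so both are in fact covered, but the labels of the two paragraphs are swapped.
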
 

\begin{proof}
The equations \eqref{eq:dec1} and \eqref{eq:dec2} imply that $\tX \simeq \coprod \tX_i$. 
So if $X$ is not connected, $\tX$ is also not connected. 
Conversely, since $\red : X \to \tX$ is surjective (Corollary \ref{corollary surjectivity reduction})  and anticontinuous, if one has a decomposition 
$\tX = U_1 \coprod U_2$ in two nonempty closed-open sets, then $X = {\rm red}^{-1}(U_1) \coprod  \red^{-1}(U_2)$ 
is a decomposition of $X$ in nonempty closed-open sets. 
\end{proof}

\begin{cor}
\label{cor:Tubes2}
Let $X$ be a semi-affinoid $k$-analytic space.
Let $Z$ be a Zariski-closed subset of $\tX$ and let $Y:= \red^{-1}(Z)$. 
Then $Y$ is connected if and only if $Z$ is connected.
\end{cor}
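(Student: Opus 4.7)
The plan is to combine the two main structural results already established, namely Corollary \ref{cor:Tubes1} and Lemma \ref{lemma:ReductionConnected}, which together reduce the statement to a triviality.

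First, I would invoke Corollary \ref{cor:Tubes1} applied to the Zariski-closed subset $Z \subset \widetilde{X}$: this tells us that the tube $Y = \red^{-1}(Z)$ is itself a semi-affinoid $k$-analytic space, and moreover its canonical reduction $\widetilde{Y}$ maps to $\widetilde{X}$ via a closed immersion whose image is exactly $Z$. In particular, the underlying topological space of $\widetilde{Y}$ is homeomorphic to $Z$ (as a subspace of $\widetilde{X}$), since a closed immersion of schemes is a homeomorphism onto its image.

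Next, since $Y$ is semi-affinoid, Lemma \ref{lemma:ReductionConnected} applies to it: $Y$ is connected if and only if its canonical reduction $\widetilde{Y}$ is connected. Combining this with the previous step, $\widetilde{Y}$ is connected if and only if $Z$ is connected, which yields the desired equivalence.

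There is no real obstacle here: all the content has been packaged into Corollary \ref{cor:Tubes1} and Lemma \ref{lemma:ReductionConnected}. The only point worth a line of justification is the identification of the underlying set of $\widetilde{Y}$ with $Z$ as topological spaces, which follows from the fact that the map $\widetilde{Y} \to \widetilde{X}$ constructed in Corollary \ref{cor:Tubes1} is a closed immersion with image $Z$; since connectedness is a topological property, the possibly non-reduced scheme structure on $Z$ is irrelevant. The proof is therefore essentially a one-line assembly of the preceding results.
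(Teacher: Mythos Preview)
Your proposal is correct and matches the paper's own proof, which simply says the result follows from Corollary~\ref{cor:Tubes1} and Lemma~\ref{lemma:ReductionConnected}. Your extra line justifying that a closed immersion identifies $\widetilde{Y}$ with $Z$ topologically is a harmless elaboration of what the paper leaves implicit.
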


\begin{proof}
This follows from Corollary \ref{cor:Tubes1} and from Lemma \ref{lemma:ReductionConnected}.
\end{proof}

Using Theorem \ref{theo:SpecialIntegral}, one checks that if $A$ is a reduced special $R$-algebra which is integrally closed in $A\otimes_Rk$, 
then there is a one-to-one correspondence between the connected components of $\Spec(A)$ and the connected components of $\Spf(A)_\eta$.

\begin{example}
In general, if $A$ is a reduced special $R$-algebra with associated $k$-analytic space $X$, the connected components of $\Spec(A)$ do not coincide with the connected components of $X$ 
as the example $A = \Zp\langle T\rangle /(T^2 + pT)$ shows.
\end{example}

\section{Analytic functions and formal functions on tubes: a graded version}
\label{section: Graded Version}
In this section, we fix $\A$ a reduced semi-affinoid $k$-algebra.
Let us recall (see \cite[2.11]{KappUni}) that this implies that $\Ao$ is a special $R$-algebra.
\begin{art}
\label{notations graded}
For $r\in \R_+^*$ we set 
\[
\Aor = \{ f \in \A \st |f|_{\sup} \leq r \}, \hspace{10pt}
\Aoor  = \{ f \in \A \st |f|_{\sup}< r \}, \hspace{10pt}
\Atr  = \Aor / \Aoor.
\]
By definition, $\Ao_1 = \Ao$, $\Aoo_1 = \Aoo$ and $\widetilde{\A}_1^+ = \Atp$.
We also denote by 
\[\rho(\A) = \{ |f|_{\sup} \st f\in \A, \ f\neq 0\}\subset \R_+^*.\]
By \cite[1.2.5.9]{KappThesis}, $\rho(\A) \subset \val$. 
We denote by $G$ the subgroup of $\rk$ generated by $\rho(\A)$. 
\end{art}

\begin{lemma}
\label{lemma r finitely generated}
Let $r\in \R_+^*$. Then $\Aor$, $\Aoor$ and $\Atr$ are finitely generated 
$\Ao$-modules.
\end{lemma}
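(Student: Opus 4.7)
The plan is to reduce all three finite generation statements to the Noetherianity of $\Ao$. Since $\A$ is reduced, \cite[Corollary 2.11]{KappUni} ensures that $\Ao$ is a special $R$-algebra, and in particular a Noetherian ring. My strategy is to realise $\Aor$ as an $\Ao$-submodule of a principal $\Ao$-module and then invoke Noetherianity; the statements for $\Aoor$ and $\Atr$ will follow immediately.

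First I would check that $\Aor$ and $\Aoor$ are $\Ao$-submodules of $\A$. For $a \in \Ao$ and $f \in \Aor$, pointwise evaluation on $X$ gives $|af(x)| = |a(x)||f(x)| \leq |a|_{\sup} |f|_{\sup} \leq r$, hence $|af|_{\sup} \leq r$ and $af \in \Aor$. The same computation shows $\Aoor$ is stable under $\Ao$-multiplication, and by definition $\Aoor \subset \Aor$.

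Next, since $k$ is discretely valued with $|\pi| < 1$, the value group $|k^*| = |\pi|^{\Z}$ is unbounded in $\R_+^*$, so I can pick $c \in k^*$ with $|c| \geq r$. For any $f \in \Aor$, the element $c^{-1}f$ satisfies $|c^{-1}f|_{\sup} = |f|_{\sup}/|c| \leq 1$, hence $c^{-1}f \in \Ao$ and $f \in c \Ao$. Therefore $\Aor \subset c \Ao$, and $c \Ao$ is a free $\Ao$-module of rank one via multiplication by $c$.

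Finally, since $\Ao$ is Noetherian and $c\Ao$ is a finitely generated $\Ao$-module, every $\Ao$-submodule of $c\Ao$ is finitely generated. Applied to $\Aor$ and to $\Aoor$, this gives the first two finite generation statements, and then $\Atr = \Aor / \Aoor$ is a quotient of the finitely generated $\Ao$-module $\Aor$, hence is itself finitely generated. There is no serious obstacle here; the whole argument is formal once the embedding $\Aor \hookrightarrow c\Ao$ is in place.
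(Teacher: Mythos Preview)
Your proof is correct and follows essentially the same approach as the paper: both arguments scale by an element of $k^*$ to embed $\Aor$ into a copy of $\Ao$ and then invoke Noetherianity of $\Ao$. The only cosmetic difference is that the paper multiplies by $\lambda$ with $|\lambda|r \leq 1$ to reduce to the case $r \leq 1$ (where $\Aor$ is literally an ideal of $\Ao$), whereas you embed $\Aor$ into the rank-one free module $c\Ao$; these are the same idea.
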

\begin{proof}
Let us pick some $\lambda \in k^*$ such that $|\lambda|r \leq 1$. Then 
\[
\begin{array}{rcl}
\Aor & \to & {\Ao}_{|\lambda| r} \\
f & \mapsto & \lambda f
\end{array}
\]
is an isomorphism of $\Ao$-modules.
So replacing $r$ by $|\lambda|r$, we can assume that $r\leq 1$.
Then $\Aor$ is an ideal of $\Ao$, hence it is a finitely generated $\Ao$-module because $\Ao$ is Noetherian.
We conclude since $\Aoor$ (resp. $\Atpr$) is a submodule (resp. a quotient) of $\Aor$.
\end{proof}

\begin{lemma}
\label{lemma:discrete}
The index  $[G:\ |k^*|]$ is finite.
As a consequence $G\simeq \Z$.
\end{lemma}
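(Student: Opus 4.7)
My plan is to run a Noether-normalization argument that reduces the problem to a base ring whose sup-norm values are exactly $|k^*|$, and then to extract the desired index bound via the minimal polynomial of an element together with a Newton-polygon computation. First I would reduce to the case where $\A$ is a domain. Since $\A$ is reduced and Noetherian with finitely many minimal primes $\mathfrak{p}_1, \ldots, \mathfrak{p}_s$, the associated analytic space decomposes as $X = \bigcup_i X_i$, where $X_i$ is the semi-affinoid space of $\A/\mathfrak{p}_i$. Then $|f|_{\sup, \A} = \max_i |f|_{\sup, \A/\mathfrak{p}_i}$, so $\rho(\A) \subseteq \bigcup_i \rho(\A/\mathfrak{p}_i)$ and $G$ sits inside the product of the $G(\A/\mathfrak{p}_i)$ taken in $\sqrt{|k^*|}$; hence it suffices to treat the domain case.

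Next I would invoke a Noether normalization for semi-affinoid algebras (in the spirit of \cite{KappThesis}) to produce a finite injective $k$-algebra morphism $\iota \colon \B \hookrightarrow \A$ with $\B = R\langle T_1, \ldots, T_m\rangle[[S_1, \ldots, S_n]] \otimes_R k$ for some $m, n \geq 0$; set $d := [\Frac(\A) : \Frac(\B)] < \infty$. A direct Gauss-type computation on $\B$ gives $|\sum a_{\alpha\beta} T^\alpha S^\beta|_{\sup, \B} = \max |a_{\alpha\beta}|$, so $\rho(\B) \subseteq |k^*|$; moreover, $\B$ is regular (hence normal) and its sup-norm is multiplicative. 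For any $f \in \A$, normality of $\B$ ensures that the minimal polynomial $p(T) = T^{d'} + c_{d'-1}T^{d'-1} + \cdots + c_0$ of $f$ over $\Frac(\B)$ has coefficients in $\B$, with $d' \leq d$.

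The core of the argument will be a Newton-polygon identity on the finite surjective map $\pi \colon X \to Y := \M(\B)$: for each $y \in Y$, the absolute values of the roots of $p(T)|_y$ in an algebraic closure of $\H(y)$ coincide with $\{|f(x)| : x \in \pi^{-1}(y)\}$, and the Newton polygon of a monic polynomial gives $\max_{x \in \pi^{-1}(y)} |f(x)| = \max_j |c_j(y)|^{1/(d'-j)}$. Taking the supremum in $y$ and commuting with the finite max over $j$ yields
\[
|f|_{\sup, \A} \;=\; \max_{0 \leq j < d'} |c_j|_{\sup, \B}^{1/(d'-j)}.
\]
Since each $|c_j|_{\sup, \B}$ lies in $|k^*|$, we get $|f|_{\sup, \A}^{d!} \in |k^*|$ for all $f \in \A$, so $G$ is contained in $\{r \in \R_+^* : r^{d!} \in |k^*|\}$, a cyclic extension of $|k^*|$ of index $d!$. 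This gives $[G : |k^*|] \leq d! < \infty$, and since $|k^*| \simeq \Z$ and $\sqrt{|k^*|} \simeq \Q$, any subgroup of $\sqrt{|k^*|}$ of finite index over $|k^*|$ is itself isomorphic to $\Z$.

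The main obstacle I anticipate is justifying the Newton-polygon sup-norm formula in the finite semi-affinoid setting, since it rests on surjectivity of $\pi$ and on the identification of fiber points with roots of the minimal polynomial: both are classical in the affinoid case and should carry over (for instance by passing to a suitable affinoid subdomain, or by using the generic fiber of a special formal model provided by Theorem \ref{theo:SpecialIntegral}), but they require verification in the semi-affinoid framework.
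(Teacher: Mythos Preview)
Your approach is plausible but takes a substantially more elaborate route than the paper's proof. The paper gives a short, entirely elementary argument: it sets $\kappa := \sup\{\lambda \in \rho(\A) : \lambda < 1\}$ and observes that if $\kappa$ were not attained, one would obtain a strictly increasing chain of ideals $I_n = \{f \in \A^\circ : |f|_{\sup} \leq \lambda_n\}$ in the Noetherian ring $\A^\circ$ (this is where \cite[Corollary 2.11]{KappUni}, already recalled at the start of the section, enters). Hence $\kappa = |\pi|^{a/b} \in \rho(\A)$ with $\gcd(a,b)=1$; using only that $\rho(\A)$ is stable under $r \mapsto r^n$ ($n \geq 1$) and $r \mapsto |\pi|^m r$ ($m \in \Z$), a B\'{e}zout argument forces $a = 1$ and then shows that every $|\pi|^{\alpha/\beta} \in \rho(\A)$ (with $\alpha,\beta$ coprime, $\beta>0$) has $\beta \leq b$, so $\rho(\A) \subset |\pi|^{(1/v)\Z}$ with $v = \mathrm{lcm}(1,\ldots,b)$. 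No reduction to domains, no normalization, no Newton polygon.

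By contrast, your argument imports heavier machinery: a Noether normalization for semi-affinoid domains with base of the specific free form $R\langle T\rangle\llbracket S\rrbracket \otimes_R k$, and the spectral-norm identity $|f|_{\sup,\A} = \max_j |c_j|_{\sup,\B}^{1/(d'-j)}$ for finite semi-affinoid extensions. Both ingredients are reasonable and, if available, would yield the more structural bound $[G:|k^*|] \leq d!$; but, as you yourself flag, neither is established in the paper, and each requires separate justification in the semi-affinoid setting (the normalization statement does appear in the Lipshitz--Robinson quasi-affinoid theory, and the Newton-polygon formula can presumably be obtained by exhausting $X \to Y$ by compatible affinoid subdomains and passing to the limit, but this has to be carried out). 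Note also that you really do need the \emph{equality} in the Newton-polygon formula, not just the easy inequality $|f|_{\sup,\A} \leq \max_j |c_j|_{\sup,\B}^{1/(d'-j)}$, since the latter alone does not force $|f|_{\sup}^{d!} \in |k^*|$. So while your strategy is sound in outline and gives extra information, it leaves genuine work undone, whereas the paper's argument is self-contained once one knows $\A^\circ$ is Noetherian.
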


\begin{proof}
Let $\kappa := \sup \{ \lambda \st \lambda <1 \ \text{and}  \ \lambda \in \rho(\A) \}.$
We claim that $\kappa$ is actually a maximum, that is to say, there exists $\lambda<1$ with $\lambda \in \rho(\A)$ such that $\kappa = \lambda$ 
(this implies in particular that  $\kappa <1$). 
Indeed, if $\kappa$ was not a maximum, we could find an increasing sequence $(\lambda_n)_{n\in \N}$ with $\lambda_n <1$ in $\rho(\A)$. 
Then $I_n = \{f\in \A \st |f|_{\sup} \leq \lambda_n \}$ 
would be an infinite increasing sequence of ideals of $\Ao$, which is Noetherian. 
This would be a contradiction. 
Hence, $\kappa <1$ and $\kappa \in \rho(\A)$.

Since $\kappa \in \rho(\A)$, we can write $\kappa = |\pi|^{\frac{a}{b}}$ with $a$ and $b$ relatively prime, and $b>0$.
One can prove easily using Bezout's Theorem that $a=1$.
Hence $\kappa = |\pi|^{\frac{1}{b}}$. 
Likewise, using again Bezout's Theorem, one can prove that if $ |\pi|^{\frac{\alpha}{\beta}} \in \rho(\A)$ with $\beta>0$ and $\alpha$ and $\beta$ 
coprime, then $\beta \leq b$.
Hence if  
$v :={\rm lcm}(1,2,\ldots,b)$, one has $\rho(\A) \subset |\pi|^{\frac{1}{v}\Z}$.
\end{proof}

\begin{rem}
If $\| \cdot \|$ is a $k$-Banach algebra norm on $\A$, according to \cite[1.2.5.8]{KappThesis}, for any $f\in \A$,
$|f|_{\sup} = \lim_{n\in \N} \sqrt[n]{\|f^n\|}$.
The notation $\rho(\A)$ that we have introduced is compatible with \cite[1.3]{Berko90} where the spectral radius of an element of a Banach algebra $\A$ is defined to be 
$\rho(f) = \lim_{n\in \N} \sqrt[n]{\|f^n\|}$. 
Let us recall (see \cite[1.2.5.4]{KappThesis}) that  a surjective morphism  of semi-affinoid $k$-algebras 
\[\RmnT \otimes_Rk \to \A\] 
induces a $k$-Banach norm on $\A$ by taking the residue semi-norm of the Gauss norm on 
\[\RmnT \otimes_Rk.\]
\end{rem}

\begin{example}
In general, $\rho(\A)$ is not a subgroup, and not even a monoid. 
For instance if 
$\A = \Qp(\sqrt{p}) \times \Qp(\sqrt[3]{p})$ then 
$\rho(\A) = p^{\frac{1}{2}\Z} \cup p^{ \frac{1}{3} \Z } $ which is not a monoid. 
\end{example}
The following definition is inspired by \cite[section 3]{Temloc2}.

\begin{defi}
We define the \emph{total reduction ring of $\A$} as 
\[ \widetilde{\A}^+_{\text{tot}} := \bigoplus_{r \in G} \Atr. \]
\end{defi}

\begin{cor}
\label{cor:reduced}
The total reduction ring $\Atot$ of $\A$ is excellent and reduced.
\end{cor}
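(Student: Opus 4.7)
The plan is to reduce the statement to the fact that module-finite extensions of excellent rings remain excellent. For reducedness, I would begin by noting that since $\A$ is reduced, the formula $|f|_{\sup} = \sup_{x \in X} |f(x)|$ yields $|f^n|_{\sup} = |f|_{\sup}^n$ for every $f \in \A$. Consequently each graded piece $\widetilde{\A}^+_r$ is homogeneously reduced: if $\bar f \in \widetilde{\A}^+_r$ has $\bar f^n = 0$ in $\widetilde{\A}^+_{r^n}$, then $|f^n|_{\sup} < r^n$ forces $|f|_{\sup} < r$, so $\bar f = 0$. Because $G \simeq \Z$ is totally ordered (Lemma \ref{lemma:discrete}), a standard leading-term argument---the top-degree homogeneous component of $f^n$ is the $n$-th power of the top-degree homogeneous component of $f$, with no contributions from lower terms---then upgrades this to reducedness of the whole graded ring $\Atot$.

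For excellence, the key observation is that the uniformizer $\pi$ is homogeneously invertible in $\Atot$. Since $|\pi|_{\sup} = |\pi|$ exactly, the class of $\pi$ is a nonzero homogeneous element of $\widetilde{\A}^+_{|\pi|}$; and for any $g \in \A^\circ_{|\pi| r}$ the element $g / \pi$ lies in $\A^\circ_r$, so multiplication by $\pi$ induces isomorphisms $\widetilde{\A}^+_r \xrightarrow{\sim} \widetilde{\A}^+_{|\pi| r}$ for every $r \in G$. Letting $\kappa$ be the generator of $G$ with $\kappa < 1$ constructed in the proof of Lemma \ref{lemma:discrete}, so that $|\pi| = \kappa^v$, these isomorphisms allow me to identify the subring $\widetilde{\A}^+[\pi, \pi^{-1}] \subset \Atot$ with an abstract Laurent polynomial ring $\widetilde{\A}^+[T, T^{-1}]$ and to decompose
\[
\Atot \simeq \bigoplus_{i = 0}^{v - 1} \widetilde{\A}^+_{\kappa^i}
\]
as a $\widetilde{\A}^+[T, T^{-1}]$-module. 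By Lemma \ref{lemma r finitely generated} each summand is a finitely generated $\widetilde{\A}^+$-module, so $\Atot$ is a module-finite extension of $\widetilde{\A}^+[T, T^{-1}]$.

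It remains to check excellence of $\widetilde{\A}^+[T, T^{-1}]$. By Lemma \ref{lemma red fin gen}, $\widetilde{\A}^+$ is a quotient of $\widetilde{k}[T_1, \ldots, T_m][[S_1, \ldots, S_n]]$, which is excellent by Valabrega \cite{Val75, Val76}; hence $\widetilde{\A}^+$ is excellent, as is its finite-type localization $\widetilde{\A}^+[T, T^{-1}]$. Since excellence passes to finite algebras, $\Atot$ is excellent. The main subtlety I anticipate lies in cleanly setting up the isomorphism $\widetilde{\A}^+[T, T^{-1}] \xrightarrow{\sim} \widetilde{\A}^+[\pi, \pi^{-1}]$ and the resulting decomposition of $\Atot$ as a module over it; once the grading structure is pinned down by the invertibility of $\pi$, the rest of the excellence argument is essentially formal.
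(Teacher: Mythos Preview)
Your argument is correct and matches the paper's proof: both deduce reducedness from the power-multiplicativity of $|\cdot|_{\sup}$ on homogeneous elements (the paper leaves the leading-term argument for inhomogeneous elements implicit, you spell it out), and both prove excellence by exhibiting $\Atot$ as of finite type over the excellent ring $\Atp$ via the invertibility of $\widetilde{\pi}$ together with the finite index $[G:|k^*|]$---you package this as module-finiteness over the Laurent subring $\Atp[\pi,\pi^{-1}]$, the paper instead lists explicit $\Atp$-algebra generators $\widetilde{\pi},\widetilde{\pi^{-1}},f_{i,j}$, but the content is identical. One minor slip: the element $\kappa$ constructed in the proof of Lemma~\ref{lemma:discrete} is the largest element of $\rho(\A)$ below $1$, which need not generate $G$ (cf.\ the example following that lemma); simply take $\kappa$ to be the generator of $G\simeq\Z$ less than $1$, whose existence is the conclusion of that lemma.
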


\begin{proof}
By Lemma \ref{lemma:discrete} $[G:|k^*|]$  is finite, so we can introduce 
$n \coloneqq [G:|k^*|] \in \N$. 
Let $r_1,\ldots, r_n\in G$ be some representatives of the classes of $G/ |k^*|$.
For each $i\in \{1,\ldots,n\}$, 
by Lemma \ref{lemma r finitely generated}, we can fin a finite family 
$(f_{i,j})_{j\in J}$ which is a finite set of generators of the $\Ao$-module $\widetilde{\A}^+_{r_i}$. 
Let us denote by $\widetilde{\pi}$ (resp. $\widetilde{\left( \frac{1}{\pi} \right)}$) the image of $\pi$ in $\widetilde{\A}^+_{|\pi|}$ 
(resp of $\frac{1}{\pi}$ in $\widetilde{\A}^+_{\frac{1}{|\pi|}}$).
We get that 
\[\bigoplus_{r \in G} \Atr = \Atp \left[ \widetilde{\pi}, \widetilde{\left( \frac{1}{\pi} \right)}, f_{i,j} \right]. \]
Hence $\Atot$ is a finitely generated $\Atp$-algebra. 
Since $\Atp$ is excellent, it follows that $\Atot$ is also excellent. 
For the reduceness, let us consider a nonzero element $\widetilde{f} \in \Atr$ for  some $f\in \A$ with $|f|_{\sup}=r$. 
Then for any integer $k>0$, the associated element $\widetilde{f}^k = \widetilde{f^k}\in \widetilde{\A}^+_{r^k}$ is also nonzero since $|f^k|_{\sup} = |f|_{\sup}^k = r^k$.
\end{proof}

\begin{rem}
\label{rem:Aoor}
Let $r\in \rho(\A)$. 
Since $\rho(\A)$ is discrete in $\R^*_+$ (see Lemma \ref{lemma:discrete}), there exists a real number $s\in \rho(\A)$ which is 
the biggest element of $\rho(\A)$ such that $s<r$. 
It follows that $\Aoor = \Aos$.
\end{rem}

We fix $I$ an ideal of $\Ao$.
If $M$ is an $\Ao$-module, we denote by 
$M^{\wedge I}$ the completion of $M$ with respect to the $I$-adic topology. 
So 
\[ M^{\wedge I} \simeq \varprojlim_{n} M/I^nM.\]

\begin{lemma}
\label{lemma:ses}
Let $r\in \rho(\A)$. 
There is a short exact sequence of $\Ao$-modules
\begin{equation}
\label{eq:ses}
0 \rightarrow (\Aoor)^{\wedge I} \rightarrow (\Aor)^{\wedge I} \rightarrow (\Atr)^{\wedge I} \rightarrow 0.
\end{equation}
\end{lemma}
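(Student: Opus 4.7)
The plan is very short: this is essentially an exactness property of completion on finitely generated modules over a Noetherian ring, with the short exact sequence already built into the definitions.

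First, by the very definition of $\Atr = \Aor / \Aoor$, there is an obvious short exact sequence of $\Ao$-modules
\[
0 \rightarrow \Aoor \rightarrow \Aor \rightarrow \Atr \rightarrow 0.
\]
Since $\A$ is reduced, Corollary 2.11 of \cite{KappUni} (already invoked in the preamble of this section) ensures that $\Ao$ is a special $R$-algebra, hence Noetherian. By Lemma \ref{lemma r finitely generated}, each of $\Aor$, $\Aoor$ and $\Atr$ is a finitely generated $\Ao$-module.

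Next, I would appeal to the standard fact that, for a Noetherian ring $\Ao$ and an ideal $I \subset \Ao$, the $I$-adic completion functor is exact on the category of finitely generated $\Ao$-modules; this is a direct consequence of the Artin--Rees lemma. Applying this functor to the short exact sequence above yields the claimed sequence \eqref{eq:ses}.

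There is no genuine obstacle here, and in particular the hypothesis $r \in \rho(\A)$ is not really needed for exactness itself (if $r \notin \rho(\A)$ then $\Atr = 0$ and the sequence is trivially exact); the assumption is presumably included because Remark \ref{rem:Aoor}, which describes $\Aoor$ as $\Aos$ for a suitable $s \in \rho(\A)$, will be useful when this lemma is subsequently combined with Theorem \ref{theo:Completion} to identify the completions $(\Aor)^{\wedge I}$ with rings of analytic functions on tubes.
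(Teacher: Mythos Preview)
Your argument is correct and essentially identical to the paper's: the paper also writes down the tautological short exact sequence $0 \to \Aoor \to \Aor \to \Atr \to 0$ of finitely generated $\Ao$-modules and then invokes exactness of $I$-adic completion on finitely generated modules over the Noetherian ring $\Ao$ (citing Matsumura). Your additional remarks on why $\Ao$ is Noetherian and on the role of the hypothesis $r \in \rho(\A)$ are accurate and harmless.
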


\begin{proof}
By definition of $\Atr$, there is a short exact sequence 
\begin{equation}
\label{eq:ses2}
0 \rightarrow \Aoor \rightarrow \Aor \rightarrow \Atr \rightarrow 0
\end{equation} 
of finitely generated $\Ao$-modules. 
So the $I$-adic completion of \eqref{eq:ses2} remains exact (see \cite[8.7 and 8.8]{Mat89}).
\end{proof}

\begin{art}
\label{notations A B}
Let $I$ be an ideal of $\Ao$ and let us set $U= \{x\in X \st |f(x)|<1 \ \forall f\in I\}.$ 
Let us denote by $\B$ its associated semi-affinoid $k$-algebra, which can be defined as 
\[ \B = (\Ao\llbracket S_1,\ldots,S_n\rrbracket / (f_i-S_i)_{i=1\ldots n})\otimes_Rk \]
where $I=(f_1,\ldots,f_n)$.
According to Theorem \ref{theo:Completion}, one has $\Bo\simeq (\Ao)^{\wedge I}$.
We denote by $Y$ the $k$-analytic space associated with $\B$.
If $g\in \B$ we denote by  $|g|_{\sup} = \sup_{y\in Y} |g(y)|$.
\end{art}

\begin{rem}
\label{rem:Bor}
Let $r \in |k^*|$ and let $\lambda \in k$ with $|\lambda| = r^{-1}$.
Multiplication by $\lambda$ induces an isomorphism of $\Ao$-modules 
$\Ao_r \xrightarrow[]{ \times \lambda} \Ao$. 
Completing with respect to $I$ one gets an isomorphism of $(\Ao)^{\wedge I}$-modules 
$ (\Aor)^{\wedge I}  \simeq (\Ao)^{\wedge I}$.
Finally using Theorem \ref{theo:Completion} we get an isomorphism of $(\Ao)^{\wedge I}$-modules 
$\Bor \simeq (\Aor)^{\wedge I}$ obtained as the composition 
\[ (\Aor)^{\wedge I}  \xrightarrow[]{\times \lambda} (\Ao)^{\wedge I} \rightarrow 
\Bo \xrightarrow[]{\times \lambda^{-1}} \Bor.\]
More generally, if $r\in \rho(\A)$, we can find $s\in |k^*|$ with $r \leq s$, leading to an inclusion of 
$\Ao$-modules $\Aor \to \Aos$.
Then completing with respect to $I$ we get an inclusion of $(\Ao)^{\wedge I}$-modules 
$(\Aor)^{\wedge I} \to (\Aos)^{\wedge I}$.
Using the above identifications, we can assimilate $(\Aor)^{\wedge I}$ as a $\Bo$-submodule of $\Bo_s$, hence as a 
$\Bo$-submodule of $\B$.
\end{rem}

\begin{lemma}
\label{lemma:Boo}
Let $r\in |k^*|$.
\begin{enumerate}[(i)]
\item Let $g\in \Bor \simeq  (\Aor)^{\wedge I}$ (see Remark \ref{rem:Bor}).
Let $\widetilde{g} \in (\Atp_r)^{\wedge I}$ be the image of $g$ by the reduction map of the short exact sequence 
\eqref{eq:ses} of Lemma \ref{lemma:ses}.
Then $\widetilde{g}=0$ if and only if $\supn{g} <r$. 
Equivalently, $\widetilde{g}\neq 0$ if and only if $\supn{g} = r$.
\item There is a natural isomorphism $\Boo_r  \simeq  (\Aoo_r)^{\wedge I}$.
\end{enumerate}
\end{lemma}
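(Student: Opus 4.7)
The plan is to reduce both claims to the case $r=1$ and then observe that (i) and (ii) are equivalent reformulations of a single statement about the reduction map in the short exact sequence \eqref{eq:ses}.

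First I would reduce to $r=1$. Choose $\lambda \in k^{*}$ with $|\lambda|=r^{-1}$; multiplication by $\lambda$ induces compatible $\Ao$-module isomorphisms $\Aor \to \Ao$, $\Aoor \to \Aoo$ and $\Atr \to \Atp$, which commute with $I$-adic completion, and similarly $\Bor \to \Bo$ and $\Boo_r \to \Boo$. So both statements may be assumed at $r=1$. For $r=1$, combining \eqref{eq:ses} with Theorem \ref{theo:Completion} gives
\[
0 \to (\Aoo)^{\wedge I} \to \Bo \to (\Atp)^{\wedge I} \to 0,
\]
so the kernel of the reduction map $\Bo \to (\Atp)^{\wedge I}$ is $(\Aoo)^{\wedge I}$. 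Assertion (i) says this kernel equals $\Boo$, while (ii) identifies it with $(\Aoo)^{\wedge I}$; the two are therefore equivalent, and both reduce to proving $\Boo = (\Aoo)^{\wedge I}$ inside $\Bo$.

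Since $\Aoo$ is finitely generated over the Noetherian ring $\Ao$ and the completion map $\Ao \to \Bo = (\Ao)^{\wedge I}$ is flat, one has $(\Aoo)^{\wedge I} \simeq \Aoo \otimes_{\Ao} \Bo = \Aoo \cdot \Bo$ inside $\Bo$. The inclusion $\Aoo \cdot \Bo \subset \Boo$ is immediate: any $f \in \Aoo$ satisfies $|f(x)|<1$ for every $x\in X$, hence for every $x\in U\subset X$, so $f \in \Boo$. For the reverse inclusion, I would use (as in the proof of Theorem \ref{theo:Completion}) that $\Bo = (\Ao)^{\wedge I}$ is the $I$-adic completion of an excellent reduced ring, hence is itself a reduced special $R$-algebra, and $\B = \Bo \otimes_R k$ is reduced. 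By Remark \ref{remark biggest ideal of definition} applied to $\B$, $\Boo = {\rm rad}(\pi \Bo)$; since $\pi \in \Aoo$, we get $\Boo \subset {\rm rad}(\Aoo \cdot \Bo)$, so it suffices to show that $\Aoo \cdot \Bo$ is a radical ideal of $\Bo$.

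This final step I would carry out by computing $\Bo/\Aoo\Bo \simeq (\Atp)^{\wedge I}$, either directly from \eqref{eq:ses} or by applying $-\otimes_{\Ao}\Bo$ to the short exact sequence $0 \to \Aoo \to \Ao \to \Atp \to 0$, and then observing that $(\Atp)^{\wedge I}$ is reduced. Indeed, by Lemma \ref{lemma red fin gen}, $\Atp$ is reduced, and it is excellent, being a quotient of $\tk[T_1,\ldots,T_m]\llbracket S_1,\ldots,S_n \rrbracket$ which is excellent by Valabrega's theorems already cited in the paper; the $I$-adic completion of a reduced excellent Noetherian ring is again reduced. I expect the main delicate point of the proof to lie precisely in this verification of reducedness of $(\Atp)^{\wedge I}$; all the other steps are formal manipulations with the short exact sequence \eqref{eq:ses} and Theorem \ref{theo:Completion}.
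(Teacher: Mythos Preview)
Your argument is correct and runs along the same lines as the paper's: reduce to $r=1$, then identify the kernel of $\Bo\to(\Atp)^{\wedge I}$ with $\Boo$. The paper handles the inclusion $(\Aoo)^{\wedge I}\subset\Boo$ by invoking the discreteness of $\rho(\A)$ (Remark~\ref{rem:Aoor}) to write $\Aoo=\Aos$ for some $s<1$, and for the converse it observes that $\supn{g}<1$ forces $g^d\in(\Ao_{|\pi|})^{\wedge I}\subset(\Aoo)^{\wedge I}$ for large $d$ --- so both the paper and you end up with $\widetilde{g}$ nilpotent in $(\Atp)^{\wedge I}$, and the crux is that this ring is reduced. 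You make this step explicit via the excellence of $\Atp$; the paper's write-up of the lemma leaves it tacit and only spells out the analogous reducedness argument later, in the proof of Proposition~\ref{prop:Bor}, by appealing to Corollary~\ref{cor:reduced}. One small correction to your easy inclusion: ``$|f(x)|<1$ for every $x\in U$'' only yields $f\in\Bh$; what you want is $|f|_{\sup,U}\le|f|_{\sup,X}<1$, which does give $f\in\Boo$ and hence $\Aoo\cdot\Bo\subset\Boo$.
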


\begin{proof}
Using the same arguments as in Remark \ref{rem:Bor}, we can assume that  $r=1$. 
We then consider the short exact sequence of Lemma \ref{lemma:ses} for $r=1$:
\[0 \rightarrow (\Aoo)^{\wedge I} \rightarrow (\Ao)^{\wedge I} \rightarrow (\Atp)^{\wedge I} \rightarrow 0.\]
Let us then consider $g\in \Bo \simeq  (\Ao)^{\wedge I}$ and let us assume that $\widetilde{g}=0$.
This implies that $g\in (\Aoo)^{\wedge I}$.
By Remark \ref{rem:Aoor}, there exists $s<1$ such that $(\Aoo)^{\wedge I} = (\Aos)^{\wedge I}$ for some $s<1$. 
It follows that $\supn{g} \leq s <1$. 
Conversely, let us assume that $\supn{g}<1$. 
There exists an integer $d\in \N$  such that $\supn{g^d} \leq |\pi|$.    
 Hence, according to Remark \ref{rem:Bor}, $g^d  \in \Bo_{|\pi|} \simeq  (A^\circ_{|\pi|})^{\wedge I} \subset (\Aoo)^{\wedge I}$.
 This proves (i), and (ii) follows from (i).
\end{proof}

We can now generalize Theorem \ref{theo:Completion} to an arbitrary $r\in \rho(\A)$. 

\begin{prop}
\label{prop:Bor}
We use the notations of \ref{notations A B}.
\begin{enumerate}[(i)]
\item There is an inclusion $\rho(\B) \subset   \rho(\A)$.
\item Let $r\in \rho(\A)$. 
There are isomorphisms 
$\Bor \simeq (\Aor)^{\wedge I}$, 
$\B^{\circ\circ}_r \simeq (\Aoor)^{\wedge I}$ and 
$\Btr \simeq (\Atr)^{\wedge I}$.
\item There is a natural isomorphism
$ \Btot \simeq \bigoplus_{r\in G} (\Atr)^{\wedge I}$.
\end{enumerate}
\end{prop}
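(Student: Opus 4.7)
\textbf{Plan for Proposition \ref{prop:Bor}.}
My strategy is to prove (i) and (ii) together by exploiting multiplicativity to reduce the general case $r \in \rho(\A)$ to the case $r \in |k^*|$ already handled by Remark \ref{rem:Bor} and Lemma \ref{lemma:Boo}; part (iii) is then immediate. The key new ingredient is an extension of Lemma \ref{lemma:Boo}(i) to $r \in \rho(\A) \setminus |k^*|$, obtained by passing to $N$-th powers (with $N \coloneqq [G : |k^*|]$, finite by Lemma \ref{lemma:discrete}, so that $r^N \in |k^*|$) and exploiting the reducedness of the $I$-adic completion of the total reduction ring $\Atot$.

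First, for every $r \in \rho(\A)$ I will establish the easy inclusion $(\Aor)^{\wedge I} \subset \Bor$ as submodules of $\B$: by Lemma \ref{lemma r finitely generated}, $\Aor$ is a finitely generated $\Ao$-module, say generated by $f_1, \ldots, f_m \in \Aor$, so every $g \in (\Aor)^{\wedge I}$ can be written as $g = \sum_i b_i f_i$ with $b_i \in \Bo = (\Ao)^{\wedge I}$. Since $Y$ is an analytic domain of $X$ (from the proof of Theorem \ref{theo:Completion}), $|f_i|_{\sup}^\B \leq |f_i|_{\sup}^\A \leq r$ and $|b_i|_{\sup}^\B \leq 1$, whence $|g|_{\sup}^\B \leq r$.

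Next I will prove the following extension of Lemma \ref{lemma:Boo}(i): for every $r \in \rho(\A)$ and every $g \in (\Aor)^{\wedge I}$, the image $\widetilde{g} \in (\Atr)^{\wedge I}$ vanishes if and only if $|g|_{\sup} < r$. The presentation above shows that $g^N$ lies in the $\Bo$-submodule of $\B$ generated by the products $f_{i_1} \cdots f_{i_N} \in \mathcal{A}^\circ_{r^N}$, which coincides with $(\mathcal{A}^\circ_{r^N})^{\wedge I} \simeq \mathcal{B}^\circ_{r^N}$ by Remark \ref{rem:Bor}. Compatibility of reduction with multiplication gives $\widetilde{g^N} = \widetilde{g}^N$ in the graded ring $\bigoplus_{s \in G} (\widetilde{\A}^+_s)^{\wedge I}$, and Lemma \ref{lemma:Boo}(i) applied to $g^N$ yields $\widetilde{g^N} = 0$ iff $|g|_{\sup} < r$. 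To deduce $\widetilde{g} = 0$ from $\widetilde{g}^N = 0$, I observe that $\bigoplus_{s \in G} (\widetilde{\A}^+_s)^{\wedge I}$ embeds as a graded subring of the $\widetilde I$-adic completion $(\Atot)^{\wedge \widetilde I}$, where $\widetilde I \subset \Atot$ denotes the image of $I$: since $\Atot$ is excellent and reduced by Corollary \ref{cor:reduced}, its $\widetilde I$-adic completion is reduced by \cite[Scholie 7.8.3 (v)]{GroEGAIV2}, and the graded subring inherits reducedness.

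To conclude (i) and the reverse inclusion of (ii), let $g \in \B \setminus \{0\}$; since $\B$ is reduced semi-affinoid (its integer ring being the completion of a reduced excellent ring, hence reduced), one has $|g|_{\sup}^\B < \infty$ by Theorem \ref{theo:bound}, so the set $T_g \coloneqq \{r \in \rho(\A) : g \in (\Aor)^{\wedge I}\}$ is nonempty (take $r \in |k^*|$ with $r \geq |g|_{\sup}^\B$ and apply Remark \ref{rem:Bor}); being a nonempty bounded-below subset of the discrete set $\rho(\A)$, it admits a minimum $r_0$. The first step gives $|g|_{\sup} \leq r_0$, and if this inequality were strict, the previous step would force $\widetilde{g} = 0$ in $(\widetilde{\A}^+_{r_0})^{\wedge I}$, whence Lemma \ref{lemma:ses} combined with Remark \ref{rem:Aoor} would yield $g \in (\mathcal{A}^{\circ \circ}_{r_0})^{\wedge I} = (\mathcal{A}^\circ_{r_1})^{\wedge I}$ for $r_1 < r_0$ the predecessor of $r_0$ in $\rho(\A) \cup \{0\}$, contradicting minimality. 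Hence $|g|_{\sup} = r_0 \in \rho(\A)$, proving (i); applied as $g$ ranges over $\Bor$, this yields $\Bor \subset (\Aor)^{\wedge I}$, and combined with the first inclusion it gives the first isomorphism of (ii). The remaining isomorphisms of (ii) follow from the short exact sequence of Lemma \ref{lemma:ses}, and part (iii) is obtained by assembling (ii) over all $r \in G$. The main obstacle is the extension of Lemma \ref{lemma:Boo}(i) to values $r \notin |k^*|$, which is what necessitates introducing $\Atot$ and appealing to its excellence.
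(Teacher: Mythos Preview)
Your proposal is correct and follows essentially the same route as the paper's proof: pick the minimal $r\in\rho(\A)$ with $g\in(\Aor)^{\wedge I}$, pass to a power landing in $|k^*|$, and use that $\Atot$ is reduced and excellent (Corollary~\ref{cor:reduced}) so that its completion stays reduced, whence $\widetilde{g}\neq 0\Rightarrow\widetilde{g}^N=\widetilde{g^N}\neq 0$ and Lemma~\ref{lemma:Boo} gives $|g|_{\sup}=r$. Your presentation is slightly more explicit in two places (the easy inclusion $(\Aor)^{\wedge I}\subset\Bor$ and the embedding of $\bigoplus_s(\widetilde{\A}^+_s)^{\wedge I}$ into a reduced ring), but the argument is the same.
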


\begin{proof}
Let $g\in \B$ be a nonzero element.
Then there exists $s\in \rho(\A)$ such that $g\in \Bo_s$.
Since $\rho(\A)$ is  discrete by Lemma \ref{lemma:discrete}, we can then define the smallest element $r\in \rho(\A)$ such that $g\in \Bor$.
By Remark \ref{rem:Bor}, for each $r\in \rho(\A)$, we can naturally identify $(\Aor)^{\wedge I}$ with a $\Bo$-submodule of $\B$ and under these identifications, 
 $\B = \cup_{r\in \rho(\A)} (\Aor)^{\wedge I}$.
Since $\rho(\A)$ is  discrete, we can then define the smallest element $r\in \rho(\A)$ such that $g\in (\Aor)^{\wedge I}$.
We then consider the short exact sequence \eqref{eq:ses}
\[0 \rightarrow (\Aoor)^{\wedge I} \rightarrow (\Aor)^{\wedge I} \rightarrow (\Atr)^{\wedge I} \rightarrow 0.\]
The minimality of $r$ and  Remark \ref{rem:Aoor} imply that $g\notin  (\Aoor)^{\wedge I}$.
It follows that $\widetilde{g}\neq0$ where $\widetilde{g}\in (\Atr)^{\wedge I}$ denotes the reduction of $g$ in $(\Atr)^{\wedge I}$.
Thanks to Lemma \ref{lemma:discrete} we can pick some $d\in \N^*$ such that $r^d \in |k^*|$.
Then $g^d\in (\A^\circ_{r^d})^{\wedge I}$.
Since $(\Atot)$ is reduced and excellent (Corollary \ref{cor:reduced}), it follows that   $(\Atot)^{\wedge I}$ is also reduced.
Hence 
$\widetilde{g^d}=\widetilde{g}^d \neq0$ 
in $\left(\widetilde{\A}^+_{r^d}\right)^{\wedge I}$.
Since $r^d \in |k^*|$, Lemma \ref{lemma:Boo} implies that $\supn{g^d} = r^d$.
So  $\supn{g}=r$. This proves (i) as well as  (ii) and (iii).
\end{proof}

We also obtain the following generalization of \cite[Lemma 2.1]{bosch1985stable}.

\begin{cor}
\label{cor:intrinsec}
Let $X$ be a $k$-affinoid space, and let $Z\subset \tX$ be a Zariski-closed subset. 
Then $\widehat{\tX_{/Z}}$, the formal completion of $\tX$ along $Z$, depends intrinsically on the $k$-analytic space $\red^{-1}(Z)$.
\end{cor}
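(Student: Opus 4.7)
The plan is to recognise $\widehat{\tX_{/Z}}$ as $\Spf(\widetilde{\B}^+)$, where $\B$ is the semi-affinoid $k$-algebra of the tube $U \coloneqq \red^{-1}(Z)$, and then to check that $\widetilde{\B}^+$ is built purely from the $k$-analytic space $U$. By Corollary \ref{cor:Tubes1}, $U$ is semi-affinoid; moreover $\B$ is reduced (its special $R$-model $(\Ao)^{\wedge I}$ appearing in the proof of Corollary \ref{cor:Tubes1} is the completion of an excellent reduced ring), so Theorem \ref{theo:bound} identifies $\B$ with the ring of bounded analytic functions on $U$. This exhibits $\B$, and hence $\Bo$, $\Boo$, and $\widetilde{\B}^+ = \Bo / \Boo$, as intrinsic invariants of the $k$-analytic space $U$.

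Next I would reduce to the case where $\A$ is reduced. By Remark \ref{remark reduction reduce commutes} both $\tX$ and the underlying set of $\red^{-1}(Z)$ remain the same when $X$ is replaced by $\Xred$; by Lemma \ref{lemma reduced reduction} the graded pieces $\widetilde{\A}^+$ and $\widetilde{\B}^+$ are also unchanged. Since $X$ is $k$-affinoid, the maximum modulus principle gives $\widetilde{\A}^+ = \tA$. Then I would choose any ideal $I \subset \Ao$ whose image in $\tA$ is the ideal $\widetilde{I}$ defining $Z$; the description of $\red$ recalled in \ref{reduction map} immediately yields $U = \{x \in X \st |f(x)| < 1 \ \forall f \in I\}$, putting us exactly in the setting of \ref{notations A B}.

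At this point Proposition \ref{prop:Bor}(ii) applied at $r = 1$ delivers the key isomorphism $\widetilde{\B}^+ \simeq (\widetilde{\A}^+)^{\wedge I} = (\tA)^{\wedge \widetilde{I}}$, whose formal spectrum is by construction $\widehat{\tX_{/Z}}$. Combined with the intrinsic description of $\widetilde{\B}^+$ from the first paragraph, this proves the corollary. The essential content is already packaged in Proposition \ref{prop:Bor} and Theorem \ref{theo:bound}; the only real book-keeping — which I regard as the mildest obstacle — is carrying the reduction-to-$\Xred$ step through on both sides simultaneously, but this is precisely what Remark \ref{remark reduction reduce commutes} and Lemma \ref{lemma reduced reduction} are designed to handle.
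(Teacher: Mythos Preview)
Your argument is correct and follows the same route as the paper: identify $\widehat{\tX_{/Z}}$ with $\Spf\bigl((\Atp)^{\wedge I}\bigr)$ and then show $(\Atp)^{\wedge I}\simeq\Btp$, which is intrinsic to $U=\red^{-1}(Z)$. The only cosmetic differences are that you invoke Proposition~\ref{prop:Bor}(ii) at $r=1$ where the paper instead unpacks its ingredients directly (the short exact sequence~\eqref{eq:ses}, Theorem~\ref{theo:Completion}, and Lemma~\ref{lemma:Boo}(ii)), and that your reduction to $\Xred$ is superfluous here since Section~\ref{section: Graded Version} already fixes $\A$ reduced as a standing hypothesis.
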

\begin{proof}
Let us denote by $\B$ the semi-affinoid $k$-algebra of $\red^{-1}(Z)$. 
The space $\widehat{\tX_{/Z}}$ is the formal scheme associated to the adic $\tk$-algebra $(\Atp)^{\wedge I}$.
Thanks to the short exact sequence \eqref{eq:ses} for $r=1$,
\[  (\Atp)^{\wedge I} \simeq {\Ao}^{\wedge I} / {\Aoo}^{\wedge I}.\]
Thanks to Theorem \ref{theo:Completion} and Lemma \ref{lemma:Boo} (ii), one gets that 
\[  (\Atp)^{\wedge I} \simeq \Bo/\Boo = \Btp\]
which depends intrinsically on $\red^{-1}(Z)$ since $\B$ depends intrinsically on $\red^{-1}(Z)$.
\end{proof}

\begin{rem}
\label{remark reduction formal }
More generally, if $X$ is the $k$-analytic space associated to the reduced semi-affinoid $k$-algebra $\A$, then $\Atp$ is naturally an adic algebra 
with an ideal of definition given by $\Ah$.
This adic algebra is isomorphic to a quotient of $\tk[T_i]\llbracket S_j\rrbracket$.
Let us set $\X:= \Spf(\Atp)$.
Then $|\X| = \tX$.
Let $Z$ be a Zariski closed subset of $\tX$ and let $Y$ be the $k$-analytic space defined by 
$Y=\red^{-1}(Z)$ and let $\B$ be its associated semi-affinoid $k$-algebra.  
One shows similarly that the inclusion of analytic domain $Y \to X$ induces an isomorphism
$\widehat{\X_{/Z}} \simeq \Spf(\Btp)$ 
where we denote by $\widehat{\X_{/Z}}$ the completion of $\X$ along $Z$.
In particular, $\widehat{\X_{/Z}}$ depends intrinsically on the $k$-analytic space $\red^{-1}(Z)$.
\end{rem}

\section{Additional remarks}
\label{section:additional remarks}

\subsection{Finite morphisms}

\begin{prop}
\label{prop:finite}
Let $\varphi : \B \to \A$ be a finite morphism of semi-affinoid $k$-algebras with $\A$ reduced. 
Then $\varphi^\circ : \Bo \to \Ao$ is finite. 
\end{prop}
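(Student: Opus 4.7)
My plan is to exhibit a special $R$-subalgebra $A_0 \subset \Ao$ which is a special $R$-model of $\A$ and which is finite as a $\Bo$-module. Once this is done, Corollary \ref{cor:reduced special} (ii) will identify $\Ao$ with the integral closure of $A_0$ in $\A$; this integral closure is finite over $A_0$ by excellence of $A_0$, and composing will yield the desired finiteness of $\Ao$ over $\Bo$.

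First I would reduce to the case where $\B$ is reduced. Since $\A$ is reduced, $\varphi$ factors through $\B_{\rm red}$ as a finite morphism $\B_{\rm red} \to \A$. By Remark \ref{remark reduced space} the $k$-analytic spaces attached to $\B$ and $\B_{\rm red}$ coincide set-theoretically, so sup-norms of elements of $\B$ and of their images in $\B_{\rm red}$ agree; in particular $\Bo \to (\B_{\rm red})^\circ$ is surjective, so it is enough to prove finiteness of $\Ao$ over $(\B_{\rm red})^\circ$. Assuming now that $\B$ is reduced, Corollary \ref{cor:reduced special} (i) ensures $\Bo$ is itself a special $R$-algebra, hence excellent. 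Next I would pick generators $b_1,\ldots,b_n$ of $\A$ as a $\B$-module, rescaled by powers of $\pi$ (using Theorem \ref{theo:bound} to ensure finiteness of sup-norms) so that $b_i \in \Ao$. Setting $A_0 \coloneqq \Bo[b_1,\ldots,b_n] \subset \Ao$, \cite[Corollary 2.12]{KappUni} tells us $A_0$ is a special $R$-algebra, and clearly $A_0 \otimes_R k = \A$, so $A_0$ is a special $R$-model of $\A$. Corollary \ref{cor:reduced special} (ii) then identifies $\Ao$ with the integral closure of $A_0$ in $\A$, which by excellence is finite over $A_0$.

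The main obstacle will be to prove that $A_0$ is itself finite over $\Bo$, i.e.\ that each $b_i$ is integral over $\Bo$. A priori each $b_i$ is only integral over $\B$, so one must refine an integral equation with coefficients in $\B$ into one with coefficients in $\Bo$. My plan is to transpose the classical affinoid argument: consider the characteristic polynomial $\chi_i(T) \in \B[T]$ of multiplication by $b_i$ on the $\B$-module $\A$, obtained via Cayley--Hamilton from any finite $\B$-presentation of $\A$. Fibre-wise over a point $y$ of $\M(\B)$, the roots of $\chi_i$ in an algebraic closure of $\H(y)$ are the values $b_i(x)$ for $x \in X$ lying above $y$, each of absolute value at most $|b_i|_{\sup} \leq 1$; since the coefficients of $\chi_i$ are the corresponding elementary symmetric functions, they have sup-norm at most $1$ on $\M(\B)$ and hence lie in $\Bo$. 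Verifying this fibre-wise picture in the semi-affinoid setting---in particular the identification of the roots of $\chi_i$ over $\H(y)$ with the fibre values of $b_i$---is the technical point that will need careful justification, but should follow from the standard determinant formalism applied to the finite $\B$-module $\A$.
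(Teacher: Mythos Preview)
Your overall architecture coincides with the paper's: reduce to $\B$ reduced, produce a special $R$-model $A_0 = \Bo[b_1,\ldots,b_n]$ of $\A$ that is finite over $\Bo$, and then conclude that $\Ao$ is finite over $A_0$ and hence over $\Bo$. The divergence is entirely in how you obtain integrality of the $b_i$ over $\Bo$, and here your proposed argument has a gap.

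When $\A$ is not free as a $\B$-module, the Cayley--Hamilton polynomial attached to a chosen surjection $\B^N \twoheadrightarrow \A$ does \emph{not} have the fibrewise description you assert. Writing $b_i e_j = \sum_k c_{jk} e_k$ for a generating set $(e_j)$, the matrix $C(y) = (c_{jk}(y))$ acts on $\H(y)^N$, and only its induced action on the quotient $\A \otimes_\B \H(y)$ has eigenvalues equal to the fibre values $b_i(x)$; the action on the kernel of $\H(y)^N \to \A \otimes_\B \H(y)$ contributes further roots to $\chi_i$ over which you have no control. Thus the claimed bound $|\chi_i|_{\sup}\le 1$ on the coefficients is not justified, and the ``standard determinant formalism'' does not rescue it without extra hypotheses (freeness, or $\B$ normal so that one can work with minimal polynomials over $\Frac(\B)$).

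The paper sidesteps this entirely with an elementary rescaling that you may have overlooked. Starting from \emph{any} monic equation $f^d + \sum_{j<d} c_j f^j = 0$ with $c_j \in \B$, multiply by $\pi^{md}$ to get
\[
(\pi^m f)^d + \sum_{j<d} \pi^{m(d-j)} c_j\, (\pi^m f)^j = 0,
\]
and for $m$ large all coefficients $\pi^{m(d-j)} c_j$ lie in $\Bo$. Hence each $\pi^m f_i$ is integral over $\Bo$; replacing $f_i$ by $\pi^m f_i$ does not change $\B[f_1,\ldots,f_n]=\A$, so $A_0 \coloneqq \Bo[\pi^m f_1,\ldots,\pi^m f_n]$ is finite over $\Bo$ with no fibrewise analysis needed. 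After this point your argument and the paper's agree (the paper invokes \cite[2.10]{KappUni} where you invoke excellence via Corollary~\ref{cor:reduced special}, which amounts to the same thing).
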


\begin{proof}
Since $\Bo \to (\B_{\text{red}})^\circ$ is finite, we can also assume that $\B$ is reduced. 
Let $f_1,\ldots,f_n$ be elements of $\A$ such that 
\begin{equation}
\label{eq:finite}
 \A= \B[f_1,\ldots,f_n].
\end{equation}
Each  $f\in \{f_1,\ldots,f_n\}$ satisfies a unitary polynomial equation with coefficients in $\B$ of the form 
\begin{equation}
\label{equation multiple}
f^d + b_{d-1}f^{d-1} + \ldots + b_1 f +b_0 =0.
\end{equation} 
Then for $m\in \N^*$, multiplying by $\pi^{md}$, \eqref{equation multiple} becomes 
\begin{equation*}
\label{equation multiple 2}
(\pi^mf)^d + \pi^m b_{d-1}(\pi^mf)^{d-1} + \ldots + \pi^{m(d-1)}b_1 (\pi^m f) + \pi^{md} b_0 =0.
\end{equation*} 
But for $m$ big enough, all the coefficients $\pi^m b_{d-1}, \ldots, \pi^{m(d-1)}b_1, \pi^{md} b_0$ appearing in the above equation belong to $\Bo$. 
Hence,  for $m$ big enough, $\pi^mf_i$ satisfies a unitary polynomial  equation with coefficients in $\Bo$.
So replacing each $f_i$ by $\pi^mf_i$ (which will not change \eqref{eq:finite}), we can assume that the $f_i$'s belong to $\Ao$ and are integral over $\B^\circ$.
So, thanks to \eqref{eq:finite}, $\Bo[f_1,\ldots,f_n]$ is a special $R$-model of $\A$.
According to \cite[2.10]{KappUni}, $\Ao$ is finite over $\Bo[f_1,\ldots,f_n]$, and hence also over $\Bo$. 
\end{proof}

We conjecture that  for an arbitrary  non-archimedean non-trivially valued field $k$, a similar statement holds for quasi-affinoid $k$-algebras 
(see \cite[2.1.8]{LR_ring} for the definition of a quasi-affinoid $k$-algebra).

\begin{cor}
\label{cor:finite}
Let $\varphi : \A \to \B$ be a finite morphism of semi-affinoid $k$-algebras. 
The associated morphisms
$\widetilde{\varphi} : \At \to \Bt$, 
$\widetilde{\varphi}^+ : \Atp \to \Bt^+$, 
$\widetilde{\varphi}_{\text{tot}} : \Atot \to \widetilde{\B}^+_{\text{tot}}$ are finite.
\end{cor}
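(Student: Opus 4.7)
The plan is to deduce all three finiteness assertions from Proposition \ref{prop:finite}, after first reducing to the case where both $\A$ and $\B$ are reduced. By Lemma \ref{lemma reduced reduction}, $\At$ and $\Atp$ are canonically isomorphic to their analogues for $\Ared$; the same argument shows that $\Atr$, and therefore $\Atot$, are unchanged upon passing to $\Ared$ (nilpotents in $\A$ have sup-norm zero, so they lie in $\Aoor$ for every $r > 0$), and similarly for $\B$. Since a quotient of a finitely generated module is still finitely generated, the given finite morphism $\A \to \B$ induces a finite morphism $\Ared \to \B_{\mathrm{red}}$. Hence I may assume from now on that $\A$ and $\B$ are reduced. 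In this situation, Proposition \ref{prop:finite} applied to $\varphi$ (whose target $\B$ is reduced) yields that $\varphi^\circ \colon \Ao \to \Bo$ is a finite ring extension.

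To deduce finiteness of $\widetilde{\varphi}$ and $\widetilde{\varphi}^+$, I would observe that $\varphi^\circ$ maps $\Ah$ into $\Bh$ and $\Aoo$ into $\Boo$, because pullback along the induced map $\varphi^\# \colon Y \to X$ of analytic spaces satisfies $|\varphi(f)(y)| = |f(\varphi^\#(y))|$, and hence preserves both the pointwise condition $|f(x)| < 1$ for all $x$ and the global condition $|f|_{\sup} < 1$. Passing to quotients, the finite $\Ao$-module structure on $\Bo$ descends to finite $\At$- and $\Atp$-module structures on $\Bt = \Bo/\Bh$ and $\Btp = \Bo/\Boo$ respectively, which is the claim.

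For the graded version, let $H \subset \rk$ be the subgroup generated by $\rho(\A) \cup \rho(\B)$; by Lemma \ref{lemma:discrete} applied to $\A$ and to $\B$, the index $[H : |k^*|]$ is finite, and I fix coset representatives $h_1, \ldots, h_N \in H$. By Lemma \ref{lemma r finitely generated}, each $\Bo_{h_i}$ is a finitely generated $\Bo$-module, hence, in view of the finiteness of $\Bo$ over $\Ao$ from the first paragraph, a finitely generated $\Ao$-module; choose generators $g_{i,1}, \ldots, g_{i,m_i}$. For an arbitrary $r \in H$, write $r = |\lambda|^{-1} h_i$ with $\lambda \in k^*$; then multiplication by $\lambda$ is an isomorphism $\Bor \simeq \Bo_{h_i}$ of $\Ao$-modules, so $\Bor$ is generated over $\Ao_{|\lambda|^{-1}}$ by the elements $\lambda^{-1} g_{i,1}, \ldots, \lambda^{-1} g_{i,m_i}$. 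Reducing modulo the submodule of elements of strictly smaller norm, $\Btr$ is generated over $\widetilde{\A}^+_{|\lambda|^{-1}} \subset \Atot$ by the reductions of these same elements. The finite family $\{\widetilde{g_{i,j}}\}$ therefore generates $\Btot$ as an $\Atot$-module, so $\widetilde{\varphi}_{\mathrm{tot}}$ is finite.

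The main technical point will be the last paragraph, in which one must assemble generators in every graded piece of $\Btot$ from a finite collection associated with coset representatives for $H/|k^*|$; finiteness of this index, provided by Lemma \ref{lemma:discrete}, is the essential ingredient that keeps the collection of generators finite.
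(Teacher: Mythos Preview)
Your proposal is correct and follows essentially the same route as the paper: reduce to the reduced case via Lemma \ref{lemma reduced reduction}, deduce finiteness of $\widetilde{\varphi}$ and $\widetilde{\varphi}^+$ from Proposition \ref{prop:finite}, and handle $\widetilde{\varphi}_{\text{tot}}$ by a coset-representative argument exactly in the spirit of Corollary \ref{cor:reduced} (the paper simply writes ``argue as in the proof of Corollary \ref{cor:reduced}'', and you have unpacked that). One wording slip: the phrases ``generated over $\Ao_{|\lambda|^{-1}}$'' and ``generated over $\widetilde{\A}^+_{|\lambda|^{-1}}$'' are not meaningful since these are not rings---what you mean (and what your argument actually uses) is that $\Bor$ is generated over $\Ao$ by the $\lambda^{-1}g_{i,j}$, so that in $\Btot$ the class of each generator factors as $\widetilde{\lambda^{-1}}\cdot\widetilde{g_{i,j}}$ with $\widetilde{\lambda^{-1}}\in\Atot$; also, introducing $H$ is harmless but unnecessary, since the cosets of $H/|k^*|$ lying outside $G_\B$ contribute only zero graded pieces.
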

\begin{proof}
Since the above morphisms do not change if one replaces $\A$ and $\B$ by $\A_{\red}$ and $\B_{\red}$, we can assume that $\A$ and $\B$ are reduced. 
The first two points then follow from Proposition \ref{prop:finite}. 
To prove that $\widetilde{\varphi}_{\text{tot}}$ is finite, one has to use that $\widetilde{\varphi}^+$ is finite, and then argue as in the proof of Corollary \ref{cor:reduced}.
\end{proof}

\subsection{The non-affine case}

\begin{lemma}
\label{lemma:LocalizationFormal}
Let $A$ be a reduced special $R$-algebra which is integrally closed in the semi-affinoid $k$-algebra  $A\otimes_Rk$.
Let $\Uf$ be a formal open affine subset of $\Xf =\Spf(A)$.
Then  $\Gamma(\Uf,\O_{\Xf})$ is also a reduced special $R$-algebra which is integrally closed in the semi-affinoid $k$-algebra $ \Gamma(\Uf,\O_{\Xf}) \otimes_Rk$ and 
$\Gamma(\Uf,\O_{\Xf}) \simeq \Gamma( \spe_{\Xf}^{-1} (\Uf),  \O^\circ_{\Xf_\eta})$.
\end{lemma}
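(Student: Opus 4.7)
The plan is to reduce the assertion to the case of a principal open $\Uf = D(f) := \Spf(A\{1/f\})$ for some $f \in A$ (where $A\{1/f\}$ is the $J$-adic completion of $A[1/f]$ and $J$ denotes the largest ideal of definition of $A$), and then to handle the general case by a standard covering-and-glueing argument. Since $\Xf$ is Noetherian, every formal open affine $\Uf$ is covered by finitely many principal opens $D(f_i)$ whose pairwise intersections $D(f_i) \cap D(f_j) = D(f_i f_j)$ are again principal, so the sheaf property expresses both $\Gamma(\Uf, \O_\Xf)$ and $\Gamma(\spe_\Xf^{-1}(\Uf), \O^\circ_{\Xf_\eta})$ as equalizers of the corresponding principal pieces.

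For the principal case, set $B := A\{1/f\}$. Presenting $A$ as a quotient of $R\langle X_1,\ldots,X_m\rangle[[\rho_1,\ldots,\rho_n]]$ and adjoining a variable $T$ with relation $fT-1=0$ exhibits $B$ as a special $R$-algebra. Since $A$ is excellent by \cite{Val75,Val76}, its localization $A[1/f]$ is also excellent and reduced, so its $J$-adic completion $B$ is reduced as well. Integral closure commutes with localization, so $A[1/f]$ is integrally closed in $A[1/f] \otimes_R k = (A \otimes_R k)[1/f]$; applying \cite[Scholie 7.8.3 (v)]{GroEGAIV2} to the excellent ring $A[1/f]$ then yields that $\Spec(B) \to \Spec(A[1/f])$ is regular, and \cite[Proposition 6.14.4]{GroEGAIV2} (used exactly as in the proof of Theorem \ref{theo:Completion}) shows that $B$ is integrally closed in $B \otimes_R k$. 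With all the hypotheses satisfied, Theorem \ref{theo:SpecialIntegral} applies to $B$ and gives $B \simeq \Gamma(\Spf(B)_\eta, \O^\circ)$; by the functoriality of Berthelot's generic fibre construction, $\Spf(B)_\eta$ is canonically identified with the open analytic domain $\spe_\Xf^{-1}(\Uf) \subset \Xf_\eta$, which completes this case.

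Combining the principal case with the equalizer description then assembles to the desired isomorphism $\Gamma(\Uf, \O_\Xf) \simeq \Gamma(\spe_\Xf^{-1}(\Uf), \O^\circ_{\Xf_\eta})$ for arbitrary formal open affine $\Uf$. Being a special $R$-algebra is automatic for $\Gamma(\Uf, \O_\Xf)$ since $\Uf$ is an open affine formal subscheme of the special formal scheme $\Xf$; reducedness is inherited from the principal pieces through the sheaf injection $\Gamma(\Uf, \O_\Xf) \hookrightarrow \prod_i \Gamma(D(f_i), \O_\Xf)$; and integral closedness in $\Gamma(\Uf, \O_\Xf) \otimes_R k$ is a local property on $\Spec(\Gamma(\Uf, \O_\Xf))$ and hence reduces to the principal case. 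The step I expect to be most delicate is the integral closedness of $B$ in $B \otimes_R k$ in the principal case, because this is precisely where the excellence of $A$ and the EGA references have to be invoked carefully; once that point is secured, everything else amounts to routine manipulation with the sheaf property and an application of Theorem \ref{theo:SpecialIntegral}.
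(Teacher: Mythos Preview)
Your proof is correct and follows essentially the same route as the paper: reduce to a principal formal open, use excellence of $A$ together with \cite[Scholie 7.8.3 (v) and Proposition 6.14.4]{GroEGAIV2} to obtain integral closedness of the completed localization in its tensor with $k$, then pass to a general affine open via a finite principal cover. Two small points of divergence: the paper applies Theorem~\ref{theo:SpecialIntegral} once to the general $\Uf$ after establishing integral closedness (rather than gluing the isomorphism from the principal pieces via equalizers), and your phrase ``integral closedness is a local property on $\Spec(\Gamma(\Uf,\O_\Xf))$'' is slightly imprecise since the restriction maps $\Gamma(\Uf,\O_\Xf)\to\Gamma(D(f_i),\O_\Xf)$ are completed localizations rather than localizations---the paper instead restricts the integral equation to each $D(f_i)$ and uses the sheaf property of $\O_\Xf$ (and flatness of $k$ over $R$) to glue, which is what your argument needs as well.
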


\begin{proof}
Let us first assume that $\Uf$ is a principal formal open subset of the form $\Uf = \Df(f)$ for some $f\in A$. 
Let $J$ be an ideal of definition of $A$.
Then  $\Gamma(\Uf,\O_{\Xf}) \simeq  A_{\{f\}}$ where $A_{\{f\}} \simeq \widehat{A_f}$ is the completion of the localization $A_f$ with respect to the ideal $J A_f$.
The composition morphism 
$A \to A_f \to \widehat{A_f}$ is regular.
Indeed $A \to A_f$ is regular since it is a localization, and $A_f \to \widehat{A_f}$ is regular since it is the completion of an excellent ring.
It follows that the morphism $A \to \widehat{A_f}$ is regular since regular morphisms are stable under composition \cite[6.8.3]{GroEGAIV2}. 
Thanks to \cite[6.14.4]{GroEGAIV2}, we conclude that $\Gamma(\Uf,\O_{\Xf}) \simeq  \widehat{A_f}$ is integrally closed in $\Gamma(\Uf,\O_{\Xf}) \otimes_R k$.

Let us now assume that $\Uf$ is an arbitrary formal open affine subset of $\Xf$.
It follows from \cite[$\S$ 7 p.74--75]{deJongCryst} that $ \Gamma(\Uf,\O_{\Xf})$ is a special $R$-algebra.
Moreover, we can cover $\Uf$ by some principal formal  open subsets of the form $\Df(f_i)$ for some $f_i \in A$. 
Let us now consider an element $g\in \Gamma(\Uf,\O_{\Xf}) \otimes_R k$ which is  integral over $\Gamma(\Uf,\O_{\Xf}) $. 
This means that $g$ satisfies an equation 
\begin{equation}
\label{equation open integral}
g^d + b_{d-1}g^{d-1} + \ldots + b_1 g +b_0 =0
\end{equation} 
for some $b_j \in \Gamma(\Uf,\O_{\Xf}) $.
But for each principal formal open subset  $\Df(f_i)$, we can restrict the equation \eqref{equation open integral} to 
$\Gamma(  \Df(f_i), \mathcal{O}_{\Xf})\otimes_R k$.
We then get that $g|_{\Df(f_i)}  \in  \Gamma(  \Df(f_i), \mathcal{O}_{\Xf})\otimes_R k$ is integral over  $\Gamma( \Df(f_i), \mathcal{O}_{\Xf})$, 
hence by the first part of the proof, $g|_{\Df(f_i)} \in \Gamma(  \Df(f_i), \mathcal{O}_{\Xf})$.
Since the $\Df(f_i)$ form a covering of $\Uf$, we deduce that $g \in \Gamma(\Uf,\O_{\Xf})$, which proves that $\Gamma(\Uf,\O_{\Xf})$  is integrally closed in $\Gamma(\Uf,\O_{\Xf}) \otimes_R k$.
Finally, the equality $\Gamma(\Uf,\O_{\Xf}) \simeq \Gamma( \spe_{\Xf}^{-1} (\Uf),  \O^\circ_{\Xf_\eta})$ then follows from Theorem \ref{theo:SpecialIntegral}.
\end{proof}

The following result extends \cite[7.2.6.3]{BGR} from affinoid to semi-affinoid $k$-algebras.

\begin{cor}
\label{corollary:reduction open}
Let $X$ be a semi-affinoid $k$-analytic space.
Let $f\in \Gamma(X,\mathcal{O}^\circ_X)$ and let $Y = \{ x\in X \st |f(x)|=1 \}$.
Then $Y$ is a semi-affinoid $k$-analytic space and  the associated map $\widetilde{Y} \to \Xt$ is 
the Zariski open embedding of the principal open subset $D(\widetilde{f}) \subset \Xt$.
\end{cor}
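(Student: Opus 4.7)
The plan is to realize $Y$ as the generic fibre of the principal open formal subscheme $\Df(f) \subset \Spf(\Ao)$ and then read off both its semi-affinoid structure and its canonical reduction directly from the formal picture.

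First I would reduce to the case that the semi-affinoid $k$-algebra $\A$ of $X$ is reduced, using Remarks \ref{remark reduced space} and \ref{remark reduction reduce commutes}: replacing $X$ by $\Xred$ changes neither $Y$ (since $|f(x)| = 1$ depends only on the underlying point of $X$) nor the target $\tX$. Once $\A$ is reduced, \cite[Corollary 2.11]{KappUni} guarantees that $\Ao$ is a special $R$-algebra, and Remark \ref{remark reduction formal} identifies $\red : X \to \tX$ with the specialization map $\spe_{\Xf} : \Xf_\eta \to \Xf_s$ of $\Xf := \Spf(\Ao)$.

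Next, set $\Uf := \Df(f) \subset \Xf$, whose special fibre is the principal open subset $D(\widetilde{f})$ of $\Xf_s = \tX$. Because $f \in \Ao$, we have $|f(x)| \leq 1$ for every $x \in X$, and $\widetilde{f}(\red(x)) \neq 0$ if and only if $|f(x)| = 1$; so set-theoretically
\[
Y = \red^{-1}(D(\widetilde{f})) = \spe_{\Xf}^{-1}(\Uf_s) = \Uf_\eta.
\]
The generic fibre of a formal open of a special formal scheme is naturally an analytic domain, so this exhibits $Y$ as the semi-affinoid $k$-analytic space attached to the semi-affinoid $k$-algebra $\B := \Gamma(\Uf, \O_\Xf) \otimes_R k$.

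Finally I would compute $\widetilde{\B}$. Lemma \ref{lemma:LocalizationFormal} tells me that $\Gamma(\Uf, \O_\Xf)$ is a reduced special $R$-algebra which is integrally closed in $\B$, hence by Theorem \ref{theo:SpecialIntegral} it coincides with $\Bo$. Since $\B$ is reduced, Remark \ref{remark biggest ideal of definition} (cf. \cite[Remark 2.8]{KappUni}) identifies $\Bh$ with the biggest ideal of definition of $\Bo = \Gamma(\Uf, \O_\Xf)$, which is exactly the ideal cutting out the special fibre $\Uf_s$. Therefore
\[
\widetilde{\B} = \Bo / \Bh \simeq \Gamma(\Uf_s, \O_{\Xf_s}) = \O_\tX(D(\widetilde{f})) \simeq \widetilde{\A}[1/\widetilde{f}],
\]
and the induced map $\widetilde{Y} = \Spec(\widetilde{\B}) \to \Spec(\widetilde{\A}) = \tX$ is the Zariski open immersion $D(\widetilde{f}) \hookrightarrow \tX$. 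The only genuinely nontrivial input is the identification $\Bo = \Gamma(\Uf, \O_\Xf)$, where Lemma \ref{lemma:LocalizationFormal} and Theorem \ref{theo:SpecialIntegral} do the real work; once $\Bo$ is pinned down, the computation of $\widetilde{\B}$ as the coordinate ring of $\Uf_s$ is formal.
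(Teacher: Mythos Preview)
Your proposal is correct and follows essentially the same route as the paper: reduce to the reduced case, realize $Y$ as $\spe_{\Xf}^{-1}(\Df(f))$ for $\Xf = \Spf(\Ao)$, use Lemma \ref{lemma:LocalizationFormal} to identify $\Bo$ with $\Gamma(\Df(f), \O_\Xf)$, and then compute $\widetilde{\B}$ via the biggest ideal of definition. The only cosmetic differences are that the paper cites \cite[10.5.2, 10.5.5]{EGA1} to handle the compatibility of the biggest ideal of definition under restriction to $\Uf$, and that your separate appeal to Theorem \ref{theo:SpecialIntegral} is already packaged into the last assertion of Lemma \ref{lemma:LocalizationFormal}.
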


\begin{proof}
It follows from general properties of semi-affinoid $k$-analytic spaces that the inclusion $Y \to X$ is induced by a morphism of semi-affinoid $k$-algebras (see  \cite[Proposition 7.2.1 a)]{deJongCryst}).
Using Remark \ref{remark reduction reduce commutes}, we can easily assume that $X$ and $Y$ are reduced.
Let $\A$ be the reduced semi-affinoid $k$-algebra  associated with  $X$.
By \cite[2.11]{KappUni}, $\Ao$ is a special $R$-algebra. 
Let $\Xf \coloneqq \Spf(\Ao)$.
Let $\mathfrak{D}(f) \subset \Xf$ be the principal formal open subset associated with $f$.
Hence $Y = \spe_{\Xf}^{-1} ( \mathfrak{D}(f))$. 
By Lemma \ref{lemma:LocalizationFormal}, we have $\Gamma(\Df(f), \mathcal{O}_\Xf) \simeq \Bo$ where $\B$ is the semi-affinoid $k$-algebra associated with $Y$.
Let $\mathfrak{J}$ be the biggest ideal sheaf of definition of $\Xf$ (see \cite[10.5.4]{EGA1} for the definition and the properties of $\mathfrak{J}$).
By \cite[10.5.2]{EGA1} there is an isomorphism of schemes $(\Xf, \mathcal{O}_\Xf / \mathfrak{J}) \simeq \Spec(\At) = \Xt$.
Moreover, by \cite[10.5.5]{EGA1}, $\mathfrak{J}|_{\Uf}$ is also the biggest ideal sheaf of definition of the formal scheme $\Uf$. 
Hence $\Gamma(\Uf, \mathfrak{J})$ is the biggest ideal of definition of the adic ring 
$\Gamma(\Uf, \mathcal{O}_\Xf) \simeq \Bo$, and as a consequence $\Gamma(\Uf, \mathfrak{J}) \simeq \Bh$ (by \cite[Remark 2.8]{KappUni}).
Hence we can conclude since
\[\Bt =\Bo / \Bh \simeq \Gamma(\Df(f), \mathcal{O}_\Xf/\mathfrak{I}) \simeq \At[\widetilde{f}^{-1}]\]
\end{proof}

\begin{lemma}
\label{lemma:FormalSaturated}
Let $\Xf$ be a special formal scheme over $R$. The following are equivalent.
\begin{enumerate}[(i)]
\item Any formal open affine subscheme of $\Xf$ is isomorphic to $\Spf(A)$ where $A$ is a reduced special $R$-algebra integrally closed in $A\otimes_Rk$.
\item There exists a covering by formal open affine subschemes $\Uf_i=\Spf(A_i)$ where for each $i$, 
$A_i$ is a reduced special $R$-algebra which is integrally closed in $A_i\otimes_Rk$.
\end{enumerate}
\end{lemma}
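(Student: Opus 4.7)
The implication (i) $\Rightarrow$ (ii) is immediate: take any formal affine open covering of $\Xf$. For (ii) $\Rightarrow$ (i), my plan is to fix an arbitrary formal open affine $\Vf = \Spf(B) \subset \Xf$ and to propagate the desired properties from the given covering $\{\Uf_i\}$ to $B$ via the sheaf property. The first step is to cover $\Vf$ by principal open subsets $\Df(g_\alpha) \subset \Vf$ such that each $\Df(g_\alpha)$ is also contained in some $\Uf_{i(\alpha)}$ of the given covering; this uses the standard fact that the intersection of two formal affine opens can be refined by opens which are simultaneously principal in each. By Lemma \ref{lemma:LocalizationFormal} applied to $\Uf_{i(\alpha)} = \Spf(A_{i(\alpha)})$, each ring $B_{\{g_\alpha\}} \simeq \Gamma(\Df(g_\alpha), \O_\Xf)$ is a reduced special $R$-algebra integrally closed in $B_{\{g_\alpha\}} \otimes_R k$; the same reasoning applied to each double intersection $\Df(g_\alpha g_\beta) = \Df(g_\alpha) \cap \Df(g_\beta)$ shows that $B_{\{g_\alpha g_\beta\}}$ has the same property, and in particular all these rings are $R$-flat.

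Since the $\Df(g_\alpha)$ cover $\Vf$, the sheaf property of $\O_\Xf$ on $\Vf$ then yields both required conclusions. Reducedness of $B$ is immediate, as any nilpotent element of $B$ restricts to zero in each of the reduced rings $B_{\{g_\alpha\}}$ and hence is zero by the injectivity of $B \hookrightarrow \prod_\alpha B_{\{g_\alpha\}}$. For integral closure, given $f \in B \otimes_R k$ integral over $B$, the image of $f$ in each $B_{\{g_\alpha\}} \otimes_R k$ is integral over $B_{\{g_\alpha\}}$ and hence lies in $B_{\{g_\alpha\}}$; on each overlap, the two local lifts coincide in $B_{\{g_\alpha g_\beta\}} \otimes_R k$, and therefore in $B_{\{g_\alpha g_\beta\}}$ itself by $R$-flatness, so the sheaf condition produces a unique element of $B$ whose image in $B \otimes_R k$ is $f$. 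Finally, the fact that $B$ is a special $R$-algebra is built into the hypothesis that $\Xf$ is a special formal scheme.

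The main delicate point is the need to apply Lemma \ref{lemma:LocalizationFormal} not only to the individual $\Df(g_\alpha)$ but also to their overlaps, so as to be able to compare local lifts of $f$ inside the overlap rings themselves rather than only after inverting $\pi$; without the $R$-flatness of the $B_{\{g_\alpha g_\beta\}}$ one would only obtain a cocycle in the generic fibre and could not descend to $B$. Once the common principal refinement is in place, the remainder of the argument is purely formal sheaf-theoretic gluing.
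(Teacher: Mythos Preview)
Your proof is correct and follows essentially the same route as the paper: reduce to an affine covering of the given open $\Vf$ by pieces on which Lemma~\ref{lemma:LocalizationFormal} applies, then glue via the sheaf property. The paper covers $\Vf\cap\Uf_i$ by arbitrary formal open affines (Lemma~\ref{lemma:LocalizationFormal} already handles those), while you insist on principal opens of $\Vf$; either works. You are more explicit than the paper on two points it leaves implicit: the reducedness of $B$ (via $B\hookrightarrow\prod_\alpha B_{\{g_\alpha\}}$), and the need for $R$-flatness on overlaps so that the local lifts of $f$ actually agree in $B_{\{g_\alpha g_\beta\}}$ rather than only after inverting $\pi$. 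The paper simply writes ``So $f\in\Gamma(\Uf,\O_\Xf)$'' at that last step; your version spells out why this conclusion is legitimate.
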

\begin{proof}
That (i) implies (ii) is clear. 
To prove the converse implication, let $\mathfrak{U}$ be a formal open affine subset of $\Xf$.
Then $\Uf$ is covered by finitely many  $\Uf\cap \Uf_i$. 
For each $i$ we can find a finite covering $\{\Uf_{i,j}\}$ of $\Uf\cap \Uf_i$ by formal open affine subsets. 
Thanks to Lemma \ref{lemma:LocalizationFormal}, all the $\Uf_{i,j}$ satisfy the expected property. 
We are then reduced to the situation where $\Xf$ is affine and is covered by finitely many formal open affine $\Uf_i$'s which all satisfy the expected property. 
Let us then consider a function $f$ in the integral closure of $\Gamma(\Uf, \O_{\Xf})$ in  $\Gamma(\Uf, \O_{\Xf})\otimes_Rk$. 
Then for all $i$, $f|_{ \Uf_i}$ is in the integral closure of $\Gamma(\Uf_i, \O_{\Xf})$ in  $\Gamma(\Uf_i \O_{\Xf})\otimes_Rk$ which is by assumption $\Gamma(\Uf_i, \O_{\Xf})$.
So $f\in \Gamma(\Uf, \O_{\Xf})$.
\end{proof}

\appendix

\section{Bounded functions on reduced semi-affinoid \texorpdfstring{$k$}{k}-spaces (joint with Christian Kappen)}

In this appendix, we assume that $k$ is a non-trivially discretely valued non-archimedean field, and we let $R$ denote its valuation ring.
We fix a reduced semi-affinoid $k$-algebra $\A$, and we let $X$ denote the associated rigid analytic $k$-space.

\subsubsection*{Projective limits}
Let us start with a reminder on derived functors of projective limits of abelian groups.  
Let 
\[ \cdots \xrightarrow[]{\sigma_{3}}  G_2 \xrightarrow[]{\sigma_{2}} G_1 \xrightarrow[]{\sigma_{1}} 
G_0\]        
be a projective system of abelian groups indexed by $\N$, and let  
\[ 
\begin{array}{cccc}
 \varphi : & \displaystyle{\prod_{n\in \N} G_n} & \to & \displaystyle{\prod_{n\in \N} G_n} \\
          & (a_n)_n & \mapsto & (a_n - \sigma_{n+1}(a_{n+1}))_n.
\end{array}
\]
Then $\displaystyle \ker \varphi \simeq \varprojlim G_n$. 
Let us denote by $\varprojlim^i$ the $i$-th derived functor of  $\varprojlim$. 
According to \cite[3.5.4]{Weib}, one has the following descriptions:  
\begin{equation*}
\begin{array}{rclc}
\label{eq:phi} \varprojlim^1 G_n &  \simeq & \coker \varphi &   \\
\varprojlim^i G_n & = & 0 & \text{for} \ i>1\quad. 
\end{array}
\end{equation*} 

\subsubsection*{A flatness result}
Let us fix a presentation
$\A\simeq (k\otimes_R \Rmn)/I$ 
and let us equip $\A$ with the associated $k$-Banach algebra norm  $\| \ \|$  as in \cite[1.2.5]{KappThesis}. 
For a real number $\varepsilon \in \rK$ 
such that $0<\varepsilon <1$, we set 
\[X_\varepsilon :=\{x\in X \st |S_i(x)|\leq \varepsilon, \ i=1\ldots n\}\quad.\]
Then $X_\varepsilon$ is an affinoid $k$-space (depending on the chosen presentation of $\A$), and we let 
$\A_\varepsilon$ denote the associated affinoid $k$-algebra. It comes with a natural presentation 
\begin{equation*}
\A_\varepsilon \simeq k\langle T_1,\ldots, T_m, \varepsilon^{-1} S_1,\ldots, 
\varepsilon^{-1} S_n \rangle /I
\end{equation*}
and with an associated $k$-Banach algebra norm  $\| \ \|_\varepsilon$ such that if $\varepsilon \leq \varepsilon'$, the restriction morphism 
$\A_{\varepsilon'} \to \A_\varepsilon$ 
is contractive, that is to say, for $f\in \A_{\varepsilon'}$ we have  $\| f\|_{\varepsilon} \leq \| f\|_{\varepsilon'}$.
\par  
Let us now fix an increasing sequence of positive real numbers $(\varepsilon_n)_{n\in \N}$ such that 
$ \displaystyle \lim_{n\to \infty}\varepsilon_n =1$ and such that for all $n\in \N$, we have that $\varepsilon_n \in \rK$.
We set 
$\A_n := \A_{\varepsilon_n}$.
We denote by $X_n$ the $k$-affinoid space associated to $\A_n$.
For $n\in \N$ we denote by $\tau_n \colon \A \to \A_n$ the associated  canonical map, 
and for $m\geq n \in \N$ we denote by  $\sigma_{m,n} \colon \A_m \to \A_n$ the restriction morphism.
By the above remark, each $\sigma_{m,n}$ is a contractive morphism with respect to the norms $\| \  \|_{\varepsilon_m}$ and $\| \  \|_{\varepsilon_n}$.
The sequence $(\A_n)_{n\in \N}$ and the restriction morphisms form a projective system of abelian groups. 
The following statement is inspired by Satz 2.1 \cite{BoschBemerk} , and the proof is verbatim the same as in \cite{BoschBemerk}. 
For the convenience of the reader, we recall it.

\begin{lemma}
\label{lemma:lim} In the above setting, ${\varprojlim}^1 \A_n =0$.
\end{lemma}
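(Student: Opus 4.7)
The plan is to show that the map $\varphi \colon \prod_n \A_n \to \prod_n \A_n$ introduced above is surjective, since by the description of derived limits this is equivalent to the vanishing of $\varprojlim^1 \A_n$. Fix $(b_n) \in \prod_n \A_n$; we seek $(a_n)$ with $a_n - \sigma_{n+1,n}(a_{n+1}) = b_n$ for all $n$. The crucial observation is that for each $n$, the natural map $\tau_n \colon \A \to \A_n$ has dense image, because polynomials in the variables $T_i, S_j$ lie in $\A$ (through its presentation as a quotient of $\Rmn \otimes_R k$) and are dense in $\A_n$ for the Banach norm $\|\cdot\|_{\varepsilon_n}$.

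Using this density, for each $n$ I would choose $\gamma_n \in \A$ with $\|b_n - \tau_n(\gamma_n)\|_{\varepsilon_n} < 2^{-n}$ and decompose $b_n = \tau_n(\gamma_n) + c_n$ where $\|c_n\|_{\varepsilon_n} < 2^{-n}$. This reduces the problem to producing, separately, a preimage under $\varphi$ for $(c_n)_n$ and for $(\tau_n(\gamma_n))_n$.

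For the small residue $(c_n)$, contractivity of the transition maps gives $\|\sigma_{n+k,n}(c_{n+k})\|_{\varepsilon_n} \leq \|c_{n+k}\|_{\varepsilon_{n+k}} < 2^{-(n+k)}$, so the series $e_n := \sum_{k \geq 0} \sigma_{n+k,n}(c_{n+k})$ converges in the Banach space $\A_n$, and a telescoping check gives $e_n - \sigma_{n+1,n}(e_{n+1}) = c_n$. For the ``global'' part $(\tau_n(\gamma_n))$, the analogous infinite sum has no reason to converge, but one can exploit the fact that each $\gamma_k$ is a single element of $\A$: setting $f_n := -\sum_{k=0}^{n-1} \tau_n(\gamma_k)$, which is a \emph{finite} sum in $\A_n$, the identity $\sigma_{n+1,n} \circ \tau_{n+1} = \tau_n$ telescopes to give $f_n - \sigma_{n+1,n}(f_{n+1}) = \tau_n(\gamma_n)$. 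Then $a_n := e_n + f_n$ solves the full equation.

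The main obstacle is precisely this convergence issue: the naive formal ``antiderivative'' $\sum_{k \geq 0} \sigma_{n+k,n}(b_{n+k})$ has no reason to converge in $\A_n$ because we have no a priori control on the Banach norms $\|b_{n+k}\|_{\varepsilon_{n+k}}$. The density of $\tau_n(\A)$ in $\A_n$ circumvents this by letting us absorb the potentially large part of $b_n$ into a term coming from $\A$ globally, for which a harmless \emph{finite} telescoping construction works, leaving only a genuinely small residue to be handled by an honest convergent geometric-type series.
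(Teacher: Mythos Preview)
Your proof is correct and follows essentially the same route as the paper's: both decompose an arbitrary target sequence into a piece coming from $\A$ via density (handled by a finite telescoping sum, your $f_n = -\sum_{k<n}\tau_n(\gamma_k)$ being exactly the paper's $f_n = \tau_n(F_n)$ with $F_n = -\sum_{k<n}G_k$) and a small residue handled by the convergent series $\sum_{m\geq n}\sigma_{m,n}(c_m)$ using contractivity. The only difference is order of exposition; the mathematical content is identical.
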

\begin{proof}
We have to show that the map
\[ \begin{array}{cccc}
\varphi : &\displaystyle \prod_{n \in \N} \A_n & \to & \displaystyle  \prod_{n \in \N} \A_n \\
          & (a_n)_n & \mapsto & (a_n - \sigma_{n+1,n}(a_{n+1}))_n
\end{array}
\]
is surjective. 
So let us consider a sequence $\displaystyle (g_n) \in \prod_{n\in \N} \A_n$, and let us find a sequence 
$\displaystyle (f_n) \in \prod_{n\in \N} \A_n$ satisfying the conditions
\begin{equation}
\label{eq:system1}
g_n = f_n - \sigma_{n+1,n}(f_{n+1}) \  \ \forall n\in \N.
\end{equation}
\par 
\emph{Step 1}. 
Let us first assume that for all $n\in \N$, $g_n$ lies in the image of the restriction map $\tau_n \colon \A \to \A_n$.
For each $n$, let us then choose $G_n \in \A$ such that 
$g_n  = \tau_n(G_n)$.
We define inductively a sequence $F_n \in \A$ via
\[ \begin{cases}
F_0 := 0 & \\
F_{n+1} := F_n - G_n & \text{for} \ n\geq 0\quad.
\end{cases} \]
Setting $f_n :=\tau_n(F_n)$, we obtain a solution $(f_n)_n$ of \eqref{eq:system1}. \par 
\emph{Step 2}. 
Let us now pick some arbitrary $(g_n) \in \prod_n \A_n$.
For each $n\in \N$, the image of $\A$ in $\A_n$ is dense with respect to the topology induced by $\| \ \|_{\varepsilon_n}$.
Hence for all $n\in \N$, there exists $h_n \in \A$ such that 
$\| g_n - \tau_n(h_n) \|_{\varepsilon_n} \leq 2^{-n}$.
For $n\in \N$, we have $g_n = \tau_n(h_n) + ( g_n -\tau_n(h_n))$. 
By step $1$, there exists $(H_n)_{n\in \N} \in \prod_n \A_n$ such that  $\varphi((H_n)_{n\in \N}) = (\tau_n(h_n))_{n\in \N}$.
Hence it remains to prove that $ ( g_n -\tau_n(h_n))_{n\in \N} \in \im(\varphi)$.
Replacing $g_n$ by $ g_n -\tau_n(h_n)$, we can thus assume that 

\begin{equation*}
\label{eq:bound}
\|g_n\|_{\varepsilon_n}  \leq 2^{-n} \hspace{10pt} \forall n\in \N.
\end{equation*}
Since the morphisms $\sigma_{m,n}$ are contractive, for each $m\geq n$ we have $\| \sigma_{m,n}(g_m) \|_{\varepsilon_n} \leq \| g_m\|_{\varepsilon_m} \leq 2^{-m}$.
Hence since for each $n\in \N$, since $\A_n$ is a $k$-Banach algebra, it makes sense to define
\begin{equation*}
\label{eq:fn}
 f_n :=  \sum_{m \geq n} \sigma_{m,n}(g_m).
\end{equation*}
Finally we have 
\begin{multline*}
f_{n} - \sigma_{n+1,n}(f_{n+1}) = 
\sum_{m \geq n} \sigma_{m,n}(g_m) - \sigma_{n+1,n}\left(  \sum_{m \geq n+1} \sigma_{m,n+1}(g_m) \right) = \\
\sum_{m \geq n} \sigma_{m,n}(g_m) -  \sum_{m \geq n+1} \sigma_{m,n}(g_m) = g_n
\end{multline*}
which proves that $\varphi( (f_n)_n) = (g_n)$.
\end{proof}

For any $\A$-module $M$, we set $M_n := M\otimes_{\A} \A_n$.
\begin{defi}
We let $\Theta$ denote the functor
\[
\begin{array}{cccc}
\Theta: & \{ \text{finitely generated} \ \A\text{-modules} \}& \to & \{ \Gamma(X,\O_X)\text{-modules} \} \\
        & M & \mapsto &\displaystyle  \varprojlim_n M_n\quad.
\end{array}
\]
\end{defi}

The statement and the proof of the following result are again copied almost verbatim from \cite{BoschBemerk} (see however Remark \ref{remark finitely generated} below).

\begin{lemma}
\label{lemma:tau}
The functor $\Theta$ has the following properties:
\begin{enumerate}[(i)]
\item The functor $\Theta$ is exact.
\item For any finitely generated $\A$-module $M$, there is a natural isomorphism 
\[\tau: M\otimes_{\A} \Gamma(X,\O_X) \simeq \Theta(M).\]
\end{enumerate}
\end{lemma}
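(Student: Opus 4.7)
My plan is to deduce both statements by upgrading Lemma \ref{lemma:lim} from $\A$ to arbitrary finitely generated $\A$-modules, together with a standard presentation argument. The key input is that each restriction morphism $\A\to\A_n$ is flat, since $\A_n$ is obtained from $\A$ by a completed Gauss-localization on $\{|S_i|\leq\varepsilon_n\}$; this is the semi-affinoid counterpart of the well-known flatness of rational subdomain inclusions.

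Granted flatness, for (i) a short exact sequence $0\to M'\to M\to M''\to 0$ of finitely generated $\A$-modules yields short exact sequences $0\to M'_n\to M_n\to M''_n\to 0$ for every $n$, so that exactness of $\Theta$ reduces to the vanishing $\varprojlim^{1} M'_n = 0$. To prove this, I would fix a surjection $\A^s\twoheadrightarrow M'$, which induces surjections $\A_n^s \twoheadrightarrow M'_n$; the residue norm then makes each $M'_n$ into a $k$-Banach module, the transition maps $M'_{n+1}\to M'_n$ are contractive (inherited from $\A_{n+1}\to\A_n$), and the natural map $M'\to M'_n$ has dense image (since $\A^s\to\A_n^s$ does). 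The proof of Lemma \ref{lemma:lim} then transcribes verbatim: after subtracting off an element of $\operatorname{im}\varphi$ coming from a sequence in $M'$ itself, the remainder is bounded by $2^{-n}$ and the telescoping series $f_n := \sum_{m\geq n}\sigma_{m,n}(g_m)$ converges in the Banach module $M'_n$ and solves $g_n = f_n - \sigma_{n+1,n}(f_{n+1})$.

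For (ii), the admissible covering $X=\bigcup_n X_n$ gives $\Gamma(X,\O_X) \simeq \varprojlim_n \A_n = \Theta(\A)$, which is the sought isomorphism for $M=\A$; the natural transformation $\tau$ is then defined by sending $m\otimes f$ to the coherent family $(m\otimes \tau_n(f))_n$. Both functors $M\mapsto M\otimes_\A\Gamma(X,\O_X)$ and $\Theta$ commute with finite direct sums, so $\tau$ is an isomorphism for every free module $\A^s$. For a general finitely generated $M$, I pick a presentation $\A^r\to \A^s\to M\to 0$; right exactness of $-\otimes_\A\Gamma(X,\O_X)$ and right exactness of $\Theta$ (the latter by part (i)) produce a commutative diagram with exact rows in which the first two vertical arrows are isomorphisms, and the five lemma forces the third vertical arrow $\tau_M$ to be an isomorphism as well.

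The main obstacle is the module-level vanishing $\varprojlim^{1} M_n=0$; once the Banach-module residue norm, contractivity of transition maps, and density of $M\to M_n$ are in place, Bosch's analytic argument carries over unchanged, but verifying these structural properties is where the real bookkeeping lives.
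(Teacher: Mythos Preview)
Your argument is correct, and part (ii) is identical to the paper's proof. For part (i) you take a different route: the paper does not re-run the analytic argument at the module level, but instead observes that for a surjection $F\twoheadrightarrow M$ with $F$ finite free, flatness gives short exact sequences $0\to K_n\to F_n\to M_n\to 0$, and the long exact sequence for $\varprojlim$ together with the vanishing $\varprojlim^{i}=0$ for $i\geq 2$ on $\N$-indexed systems yields a surjection $\varprojlim^1 F_n\twoheadrightarrow\varprojlim^1 M_n$; since $\varprojlim^1 F_n=0$ by Lemma~\ref{lemma:lim} and compatibility with finite direct sums, the module case follows in one line. Your approach instead equips each $M_n$ with a residue Banach-module norm and replays Bosch's two-step argument verbatim. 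This works, but it trades a clean homological reduction for some bookkeeping: you must check that $M_n$ is actually complete (i.e.\ that the kernel of $\A_n^s\to M_n$ is closed, which holds because finitely generated ideals of affinoid algebras are closed), and that contractivity and density descend to the quotient, as you note. The paper's route is shorter; yours is more self-contained in that it avoids invoking $\varprojlim^2=0$.
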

\begin{proof}
Let us first show that $\Theta$ is exact. By the local theory of uniformly rigid spaces as developed in \cite{KappThesis}, the rings $\A_n$ are flat over $\A$. 
Hence, it suffices to show that $\varprojlim^1 M_n$ vanishes for all finitely generated $\A$-modules $M$. 
Let thus $M$ be a finitely generated $\A$-module, and let 
$F \to M \to 0$ be a finite presentation of $M$. Since the higher derivatives of the projective limit functor vanish, $\varprojlim^1 F_n$ maps onto $\varprojlim^1 M_n$. Since $\varprojlim^1$ commutes with finite direct sums, Lemma \ref{lemma:lim} shows that $\varprojlim^1F_n =0$. The claim follows.
Let us now prove the second statement. Since $\GOX$ is naturally isomorphic to $\varprojlim \A_n$, one has a natural morphism 
\begin{equation*}
\label{nat}
\tau: M\otimes_{\A} \GOX \to \varprojlim M_n.
\end{equation*}
By definition, $\tau$ is an isomorphism when $M=\A$, and more generally $\tau$ is an isomorphism when $M$ is finite and free.
In general, since $\A$ is Noetherian, and $M$ is finitely generated, there is an exact sequence 
 \begin{equation*}
 \label{resolution}
 F_1 \to F_2 \to M \to 0
 \end{equation*}
where $F_1$ and $F_2$ are finite free $\A$-modules.
 Using exactness of $\Theta$, one obtains the exact diagram
 \[
 \xymatrix{
 \ar[r]& F_2\otimes_{\A} \GOX \ar[d]^\wr \ar[r] &  
 F_1\otimes_{\A} \GOX \ar[d]^\wr \ar[r]  &  
 M\otimes_{\A} \GOX \ar[r] \ar[d]^\tau & 0 \\
 \ar[r] & \Theta(F_2) \ar[r]& \Theta(F_1) \ar[r] & \Theta(M) \ar[r] & 0 \quad,
 }
 \]
and it follows that $\tau$ is an isomorphism.
\end{proof}

\begin{rem}
\label{remark finitely generated}
The statements of Lemma \ref{lemma:tau} do not hold for general $\A$-modules. 
Likewise, Satz 2.1 and Korollar 2.2 from \cite{BoschBemerk} do not hold for general $A$-modules either
(although this is not explicitly mentioned in \cite{BoschBemerk}).
Indeed, in example \ref{counterex} below, we give a counterexample to Satz 2.1 and Korollar 2.2 \cite{BoschBemerk} involving modules which are not finitely generated. We want to stress that the results of \cite[Section 2]{BoschBemerk} are not affected by this observation: using the notations of \cite{BoschBemerk}, a correct replacement of \cite[Satz 2.1 and Korollar 2.2]{BoschBemerk} is to say that the functor $\theta$ is exact on the category of finitely generated $A\langle \zeta\rangle$-modules and that $\tau$ is an isomorphism for finitely generated $A\langle \zeta\rangle$-modules.
\end{rem}

\begin{example}\label{counterex}
\label{counterexample theta exact}
We use the notations of Satz 2.1 and Korollar 2.2 of \cite{BoschBemerk}, and we consider $A=k$. 
Moreover, we assume that $\zeta = (\zeta_1)$; that is, $\zeta$ is made of only one variable.
So $\theta$ is the functor sending a $\kX$-module $M$ to the $\kXX$-module
\[\theta(M)= \varprojlim_{n\in \N} M \otimes_{\kX} \knX\;,\]
and for each $M$, $\tau=\tau_M$ is the natural map 
\[ \tau_M : M\otimes_{\kX} \kXX \to \theta(M)\;.\]
Let us write $\A=k\langle\zeta\rangle$ and $\A_j=k\langle\varepsilon_j^{-1}\zeta\rangle$, and let us consider the $\A$-modules  
\[ 
M'=\bigoplus_{j\in\N} \A, \hspace{30pt}
M=\bigoplus_{j\in\N} \A_j , \hspace{30pt}
M''=\bigoplus_{j\in\N} (\A_j/\A)
\]
which form a natural short exact sequence of $\A$-modules
\[ 0 \to M' \to M \to M''\to 0.\]
We denote be $(e_j)_{j\in \N}$ the canonical basis of $M$. 
For simplicity of notations, we also denote by $(e_j)_{j\in \N}$ the canonical bases of $M'$ and $M''$.
We claim that both the natural morphism $\tau_{M''}$ for $M''$ and the induced map $\theta(M)\rightarrow \theta(M'')$ are not surjective, contrary to the statements of Satz 2.1 and Korollar 2.2 of \cite{BoschBemerk}. To this end, let us choose, for each $j\in \N$, a function $f_j \in \A$ such that $f_j$ is invertible in $\A$ and such that for any $m> j$, 
$f_j$ is not invertible in $\A_m$, by picking an element $t\in k$ as well as some positive integers $a,b$ such that 
\[ \varepsilon_{j+1} \geq \left|t\right|^{\frac{a}{b}} > \varepsilon_j\]
and by setting $f_j:= \zeta^b -t^a\in\A=k\langle\zeta\rangle$. Let us now consider the element $g\in\prod_nM''_n$ which is defined by giving, for each $n$, the element
\[
g_n\,:=\,\sum_{j=0}^{n-1}[f_j^{-1}\otimes 1]\cdot e_j
\]
of the $\A_n$-module
\[
M_n''\,=\,\bigoplus_{j\in \N}(\A_j/\A)\otimes_\A\A_n  \, \cong 
\,\bigoplus_{j\in \N}(A_j\otimes_\A\A_n)/\A_n,
\]
where $[\cdot]$ denotes the formation of the residue class and where we have used flatness of $\A_n$ over $\A$ to establish the above isomorphism. Then
\[
g\in\varprojlim_{n\in\N}M''_n
\]
because in $M_n''$, we have that $[f_n^{-1}\otimes 1]=[1\otimes f_n^{-1}]=0$. Let us now consider the natural map
\[
\tau_{M''}\,:\,M''\otimes_\A\varprojlim_n\A_n\rightarrow \varprojlim_n M''_n\;.
\]
For each element $h$ in the image of this map, there exists a $j_0$ such that for all $j>j_0$, the $j$th component of $h_n$ is zero for all $n$. On the other hand, for each $n$, all of the summands $[f_j^{-1}\otimes 1]\cdot e_j$ with $j<n$ defining $g_n$ are nonzero. 
Indeed, if there was an element $h\in\A_n$ with 
\[
f_j^{-1}\otimes 1=1\otimes h\textup{  in  }\A_j\otimes_{\A}\A_n\;,
\]
then the same equality would hold in the completed tensor product, which is $\A_n$, and $f_j^{-1}$ would thus extend to $\A_n$, which is not the case. We have shown that $g\notin\im\tau_{M''}$ and, thus, established our claim that $\tau_{M''}$ is not surjective. The same statement regarding the structure of $g$ shows our second claim, namely that $g$ does not lie in the image of the natural map $\theta(M)\rightarrow\theta(M'')$. 
Indeed, it suffices to remark that
\begin{eqnarray*}
\theta(M)&=&\varprojlim_n\left(\bigoplus_j(\A_j\otimes_\A\A_n)\right)\,=\,\varinjlim_i\varprojlim_n\left(\bigoplus_{j\leq i}(\A_j\otimes_\A\A_n)\right)\\
&=&\bigoplus_j\left(\varprojlim_n(\A_j\otimes_\A\A_n)\right)\;,
\end{eqnarray*}
which follows from the fact that the transition maps $\A_{n+1}\rightarrow\A_n$ are injective and that the $\A_j$ are flat over $\A$.
\end{example}
\begin{prop}
\label{prop:flatness}
 The $\A$-module $\GOX$ is faithfully flat.
\end{prop}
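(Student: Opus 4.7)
The plan is to verify the two standard criteria for $\GOX$ to be faithfully flat over the Noetherian ring $\A$: that $\GOX$ is flat over $\A$, and that $\mathfrak{m}\GOX \neq \GOX$ for every maximal ideal $\mathfrak{m}$ of $\A$.

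Flatness will follow almost immediately from Lemma \ref{lemma:tau}. Since $\A$ is Noetherian, it suffices to check that tensoring with $\GOX$ preserves injections of finitely generated $\A$-modules. Given an injection $M' \hookrightarrow M$ of such modules, I would apply $\Theta$ to the short exact sequence $0 \to M' \to M \to M/M' \to 0$: by Lemma \ref{lemma:tau}(i) the resulting sequence remains exact, and by Lemma \ref{lemma:tau}(ii) it is canonically identified with the tensored sequence
\[
0 \to M' \otimes_\A \GOX \to M \otimes_\A \GOX \to (M/M') \otimes_\A \GOX \to 0,
\]
yielding in particular the required injection $M' \otimes_\A \GOX \hookrightarrow M \otimes_\A \GOX$.

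For the condition on maximal ideals, I would use that each maximal ideal $\mathfrak{m}$ of $\A$ corresponds to a rigid point $x \in X$ with $\A/\mathfrak{m} \simeq \H(x)$ a finite extension of $k$ (Nullstellensatz for semi-affinoid $k$-algebras; see \cite{KappThesis}). Since the series variables $S_i$ are topologically nilpotent at every rigid point, $x$ lies in $X_n$ for $n$ sufficiently large; evaluation at $x$ therefore factors as $\GOX \to \A_n \to \H(x)$. Its restriction to $\A \subset \GOX$ is the quotient map $\A \to \A/\mathfrak{m}$, which is already surjective onto $\H(x)$, so evaluation at $x$ is itself surjective and its kernel is a maximal ideal of $\GOX$ containing $\mathfrak{m}\GOX$. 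Hence $\mathfrak{m}\GOX \neq \GOX$.

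The most delicate step will be invoking the Nullstellensatz correspondence between maximal ideals of $\A$ and rigid points of $X$; once this is cited, the rest is a direct combination of Lemma \ref{lemma:tau} with the standard ring-theoretic criterion for faithful flatness over a Noetherian base.
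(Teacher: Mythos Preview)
Your proof is correct and follows essentially the same approach as the paper: flatness is deduced from Lemma~\ref{lemma:tau}, and the faithful part is obtained from the Nullstellensatz by noting that any maximal ideal of $\A$ comes from a rigid point $x$ lying in some $X_n$, so that evaluation at $x$ yields a maximal ideal of $\GOX$ lying over $\mathfrak{m}$. The only cosmetic difference is that the paper phrases the second step via the criterion $\mathfrak{m}'\cap\A=\mathfrak{m}$ rather than $\mathfrak{m}\GOX\neq\GOX$, but these are equivalent here.
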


\begin{proof}
Flatness follows from Lemma \ref{lemma:tau}. 
For faithfully flatness, let us consider a maximal ideal $\mathfrak{m}$ of $\A$.
By the Nullstellensatz for semi-affinoid $k$-algebras, $\mathfrak{m}$ corresponds to a rigid point $x$ of $X$.
Since the $X_n$ cover $X$, for $n$ big enough, one has 
$x \in X_n$. 
Hence $\mathfrak{m}':= \{ f\in \GOX \st f(x)=0\}$ is a maximal ideal of $\GOX$ such that 
$\mathfrak{m}' \cap \A = \mathfrak{m}$.
\end{proof}

\begin{lemma}\label{helperlem}
The following are equivalent:
\begin{enumerate}[(i)]
\item The semi-affinoid $k$-algebra $\A$ is normal.
\item The special $R$-algebra $\Ao$ of power-bounded functions in $\A$ is normal.
\end{enumerate}
\end{lemma}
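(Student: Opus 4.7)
The plan is to reduce the equivalence to two structural facts about the inclusion $\Ao \subset \A$: namely, that $\Ao$ is integrally closed in $\A$, and that $\Ao$ and $\A$ share the same total ring of fractions. Once these are established, both implications follow essentially formally. Recall from \cite[Corollary 2.11]{KappUni} that $\Ao$ is a special $R$-algebra with $\Ao \otimes_R k \simeq \A$, so in particular $\A = \Ao[1/\pi]$.

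The direction (ii) $\Rightarrow$ (i) is immediate: $\A = \Ao[1/\pi]$ is a localization of $\Ao$, and localization preserves normality. For the converse, I would first verify the integral closedness of $\Ao$ in $\A$. If $f \in \A$ satisfies a monic relation $f^n + a_{n-1}f^{n-1} + \cdots + a_0 = 0$ with $a_i \in \Ao$, then at any point $x \in X$ the ultrametric inequality applied to $f(x)^n = -\sum_i a_i(x) f(x)^i$ forces $|f(x)| \leq 1$, so $f \in \Ao$. Next, I would identify the total rings of fractions. Since $\pi \in k^*$ is invertible in $\A$ and $\Ao$ is $R$-flat, $\pi$ is a non-zero-divisor in $\Ao$, so $\A = \Ao[1/\pi]$ embeds into the total ring of fractions of $\Ao$; conversely, any non-zero-divisor of $\Ao$ remains a non-zero-divisor of $\A$, because any putative annihilation relation in $\A$ can, after clearing $\pi^n$-denominators, be rewritten inside $\Ao$.

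Granting these two facts, (i) $\Rightarrow$ (ii) is formal: any element of the common total ring of fractions that is integral over $\Ao$ is a fortiori integral over $\A$, hence lies in $\A$ by the normality of $\A$; being then an element of $\A$ integral over $\Ao$, it lies in $\Ao$ by the integral-closedness observation. Reducedness of $\Ao$ is inherited from that of $\A$. I do not foresee any serious obstacle; the only step warranting some care is the explicit comparison of total rings of fractions, where one must verify precisely that non-zero-divisors transfer correctly in both directions between $\Ao$ and $\A$.
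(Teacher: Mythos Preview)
Your proof is correct and follows essentially the same approach as the paper's: both hinge on the identification $\Quot(\A)=\Quot(\Ao)$ together with the fact that $\Ao$ is integrally closed in $\A$, and your argument for (i)$\Rightarrow$(ii) is identical to the paper's. The only cosmetic difference is in (ii)$\Rightarrow$(i), where you invoke preservation of normality under the localization $\A=\Ao[1/\pi]$, while the paper argues directly by clearing denominators: given $f\in\Quot(\A)$ integral over $\A$, one multiplies by a suitable $\pi^m$ so that $\pi^m f$ becomes integral over $\Ao$, hence lies in $\Ao$, whence $f\in\A$.
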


\begin{proof}
Let us first remark that $\Quot(\A) = \Quot(\Ao)$. 
Let us first prove that (i) $\Rightarrow$ (ii).
Let $f\in \Quot(\Ao)$ be an element which satisfies an equation 
$f^n + \sum_{i=0}^{n-1} a_i f^i=0$ with $a_i\in \Ao$. 
Then $f\in \A$ by (i), and since the $a_i$'s $\in \Ao$, it follows that 
$f\in \Ao$.
Let us now prove that (ii) $\Rightarrow$ (i). 
Let $f\in \Quot(\A)$ be an element which satisfies an equation 
$f^n + \sum_{i=0}^{n-1} a_i f^i=0$ with $a_i\in \A$.
Then there exists an integer $m$ such that for all $i$,
$\pi^ma_i \in \Ao$.
Using the same argument as in the proof of Proposition \ref{prop:finite}, 
it follows that $\pi^mf$ satisfies 
a unitary equation with coefficients in $\Ao$,
hence $\pi^mf \in \Ao$ by (ii), hence $f\in \A$.
\end{proof}

\begin{theo}
\label{theo:bound}
Let $\A$ be a reduced semi-affinoid $k$-algebra, and $X$ its associated rigid analytic $k$-space. 
Then $\A \simeq \{f \in \GOX \st \supn{f}< \infty \}$.
\end{theo}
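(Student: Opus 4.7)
The plan is to prove the equality by establishing both inclusions. The forward inclusion $\A \subseteq \{f \in \GOX \st \supn{f} < \infty\}$ is immediate once $\A$ is equipped with the $k$-Banach norm $\|\cdot\|$ induced by a presentation $\A \simeq (k \otimes_R \Rmn)/I$: by \cite[1.2.5.8]{KappThesis}, every $f \in \A$ satisfies $|f|_{\sup} = \lim_n \sqrt[n]{\|f^n\|} < \infty$. I would attack the reverse inclusion in two stages: first the case where $\A$ is normal, which is essentially a direct application of de Jong's theorem; then the general reduced case, which I would reduce to the normal case by passing to the normalization and exploiting the faithful flatness of $\GOX$ over $\A$ established in Proposition \ref{prop:flatness}.

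For the normal case, suppose $\A$ is reduced and normal. Then $\Ao$ is normal by Lemma \ref{helperlem} and is an $R$-flat special $R$-algebra by \cite[Corollary 2.11]{KappUni}, so de Jong's theorem \cite[7.4.1]{deJongCryst} yields $\Ao \simeq \Gamma(X, \Oo_X)$. Thus for any $f \in \GOX$ with $\supn{f} \leq |\pi|^{-N}$, we have $\pi^N f \in \Gamma(X, \Oo_X) = \Ao \subseteq \A$, whence $f \in \A$.

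For the general reduced case, I would take $A'$ to be the integral closure of $\Ao$ in its total ring of fractions. Excellence of $\Ao$ \cite{Val75,Val76} makes $A'$ a finite $\Ao$-module and hence itself a normal special $R$-algebra; moreover, $\A' := A' \otimes_R k$ is a normal reduced semi-affinoid $k$-algebra which is finite as an $\A$-module. Let $X'$ denote the $k$-analytic space associated with $\A'$. Given $f \in \GOX$ with $\supn{f} < \infty$, pulling back along $X' \to X$ does not increase pointwise absolute values, so the image of $f$ in $\Gamma(X', \O_{X'})$ still has finite supremum, and the normal case applied to $\A'$ shows this image lies in $\A'$. By Lemma \ref{lemma:tau}(ii) applied to the finite $\A$-module $\A'$, we have a canonical identification $\Gamma(X', \O_{X'}) \simeq \A' \otimes_\A \GOX$. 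Inside this ring, $f$ belongs simultaneously to $\GOX$ (by hypothesis) and to $\A'$ (by the normal case). Tensoring the short exact sequence $0 \to \A \to \A' \to \A'/\A \to 0$ with the flat $\A$-module $\GOX$ yields
\[0 \to \GOX \to \A' \otimes_\A \GOX \to (\A'/\A) \otimes_\A \GOX \to 0,\]
so the fact that $f$ already lies in the submodule $\GOX$ forces its image in $(\A'/\A) \otimes_\A \GOX$ to vanish. Faithful flatness of $\GOX$ over $\A$ makes the map $\A'/\A \to (\A'/\A) \otimes_\A \GOX$ injective, hence $f$ maps to $0$ in $\A'/\A$, i.e.\ $f \in \A$.

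The step I expect to require the most attention is the identification $\Gamma(X', \O_{X'}) \simeq \A' \otimes_\A \GOX$ coming from Lemma \ref{lemma:tau}(ii): this crucially relies on $\A'$ being \emph{finitely generated} as an $\A$-module, a hypothesis whose necessity is underlined by Example \ref{counterexample theta exact}. Once the finiteness of $\A'$ over $\A$ is carefully verified via excellence of $\Ao$, the remainder of the argument is a routine combination of flat base change and faithful flatness.
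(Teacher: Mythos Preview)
Your proposal is correct and follows essentially the same approach as the paper: the normal case via de~Jong's theorem and Lemma~\ref{helperlem}, then the general reduced case by passing to the normalization and using Lemma~\ref{lemma:tau}(ii) together with faithful flatness (Proposition~\ref{prop:flatness}). The only place where the paper is slightly more explicit is the identification $\Gamma(X',\O_{X'})\simeq \A'\otimes_\A\GOX$: Lemma~\ref{lemma:tau}(ii) literally yields $\A'\otimes_\A\GOX\simeq\varprojlim_n(\A'\otimes_\A\A_n)$, and one must still observe (as the paper does) that the $\A'\otimes_\A\A_n$ are affinoid algebras whose spaces form an admissible covering of $X'$, so that this inverse limit really is $\Gamma(X',\O_{X'})$.
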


\begin{proof}
If $\A$ is normal, then $\Ao$ is a normal special $R$-algebra by Lemma \ref{helperlem}, and \cite[Theorem 7.4.1]{deJongCryst} shows that 
$\Ao \simeq \Gamma(X, \O^\circ_X)$.
The theorem follows in that case.
 In general, let $\B$ denote the normalization of $\A$. According to \cite{Val75,Val76}, 
$\B$ is a semi-affinoid $k$-algebra.
Let $X'$ denote the rigid analytic $k$-space associated to $\B$, 
let us write $p : X' \to X$ to denote the induced morphism, and let us consider the induced commutative diagram
\[
\xymatrix{
\A \ar[r] \ar[d] & \B \ar[d]  \\
\Gamma(X,\O_X) \ar[r]_{p^*} & \Gamma(X', \O_{X'} )\;.
} \] 
\par 
Let us first observe that
\[\Gamma(X', \O_{X'}) \simeq  \Gamma(X,\O_X) \otimes_{\A} \B\;.\]
Indeed, since $\A \to \B$ is finite, for each $n$ the morphism 
$\A_n \to \A_n \otimes_{\A} \B$ is also finite. 
Since $\A_n$ is a $k$-affinoid algebra, it follows that $\A_n \otimes_{\A} \B$ is also a $k$-affinoid algebra \cite[6.1.1.6]{BGR}.
If $X'_n$ denotes the $k$-affinoid space associated to $\A_n \otimes_{\A} \B$, then $X'_n$ is an affinoid domain of $X'$ and the $X'_n$ cover $X'$.
It follows that 
\begin{align*}
\Gamma(X', \O_{X'})  & \simeq   \varprojlim_{n\in \N} \Gamma(X'_n, \O_{X'}) \\
         & \simeq \varprojlim_{n\in \N} \left( \A_n \otimes_{\A} \B \right) \\
         & \simeq \Gamma(X,\O_X) \otimes_{\A} \B
\end{align*} 
where the second equality follows from the fact that $X'_n$ is a $k$-affinoid space, and the third equality follows from 
Lemma \ref{lemma:tau} (2). 
\par 
Let now $f\in \Gamma(X,\O_X)$ be a bounded function. 
Then 
$p^*(f) \in \Gamma(X',\O_{X'}) \simeq \Gamma(X,\O_X) \otimes_{\A} \B$ is also bounded on $X'$, 
and according to what we have shown in the first part of the proof, 
$p^*(f)$ comes from an element  $b\in \B$. 
\par 
Finally, $\A\subset \B$ is a sub $\A$-module of $\B$ 
because $\A$ is reduced.
Since $\Gamma(X,\O_X)$ is flat over $\A$ (Proposition \ref{prop:flatness}), it follows that 
$\Gamma(X,\O_X)$ is a submodule of 
$\Gamma(X',\O_{X'}) \simeq \Gamma(X,\O_X) \otimes_{\A} \B $.
Since $\Gamma(X,\O_X)$ is even faithfully flat over $\A$, we conclude that
\[ \Gamma(X,\O_X)\cap  \B  = \A\;.\]
Indeed, this follows from \cite[I 3.5 Prop 10(ii)]{BouComAlg}, which 
asserts that if $C \to C'$ is a faithfully flat ring morphism, if
$N$ is a $C$-module and if $N' \subset N$ is a sub $C$-module, 
then $(N'\otimes_C C' )\cap N = N'$. Since $f\in \Gamma(X,\O_X) \cap \B$, it follows that $f\in \A$.
\end{proof}

\begin{example}
\label{remark:counterexample bounded functions}
Theorem \ref{theo:bound} does not hold if we do not assume  the semi-affinoid $k$-algebra $\A$ to be reduced. 
For instance, as in Example \ref{remark example nonreduced1}, let $\A \coloneqq R\llbracket T_1,T_2 \rrbracket /( T_2^2)\otimes_Rk$, and let $X$ be the associated rigid $k$-space. 
Let $f(T_1) \in k\llbracket T_1 \rrbracket$ be a formal power series which converges on the open unit disc, 
but such that $f(T_1) \notin R\llbracket T_1 \rrbracket \otimes_R k$.
Then $f(T_1) T_2 \in \Gamma(X, \mathcal{O}^\circ_X) \setminus \A$.
\end{example}

\bibliographystyle{plain}
\bibliography{bibli}
\end{document}